\numberwithin{equation}{section}
\newcommand{\SL}{\operatorname{SL}}
\newcommand{\SO}{\operatorname{SO}}
\newcommand{\GL}{\operatorname{GL}}
\newcommand{\cC}{\mathcal{C}}
\newcommand{\cG}{\mathcal{G}}
\newcommand{\cH}{\mathcal{H}}
\newcommand{\cP}{\mathcal{P}}
\newcommand{\cR}{\mathcal{R}}
\newcommand{\bC}{\mathbb{C}}
\newcommand{\bN}{\mathbb{N}}
\newcommand{\bQ}{\mathbb{Q}}
\newcommand{\bR}{\mathbb{R}}
\newcommand{\bT}{\mathbb{T}}
\newcommand{\bZ}{\mathbb{Z}}
\newcommand{\ra}{\rightarrow}
\newcommand{\qand}{\quad \textrm{and} \quad}
\newcommand\subsetsim{\mathrel{%
\ooalign{\raise0.2ex\hbox{$\subset$}\cr\hidewidth\raise-0.8ex\hbox{\scalebox{0.9}{$\sim$}}\hidewidth\cr}}}
\newcommand{\eps}{\varepsilon}
\DeclareMathOperator{\Ad}{Ad}
\DeclareMathOperator{\trace}{tr}
\DeclareMathOperator{\Sym}{Sym}
\DeclareMathOperator{\Stab}{Stab}
\DeclareMathOperator{\Leb}{Leb}
\theoremstyle{theorem}
\newtheorem{theorem}{Theorem}[section]
\newtheorem{corollary}[theorem]{Corollary}
\newtheorem{proposition}[theorem]{Proposition}
\newtheorem{lemma}[theorem]{Lemma}
\theoremstyle{definition}
\newtheorem{definition}[theorem]{Definition}
\newtheorem{remark}[theorem]{Remark}
\newtheorem*{example}{Example}
\theoremstyle{exercise}
\begin{document}

\title{Twisted patterns in large subsets of $\bZ^N$}

\author{Michael Bj\"orklund}
\address{Department of Mathematics, Chalmers, Gothenburg, Sweden}
\email{micbjo@chalmers.se}

\author{Kamil Bulinski}
\address{School of Mathematics and Statistics,
University of Sydney, Australia}
\email{K.Bulinski@maths.usyd.edu.au}

\thanks{}

\keywords{Multiple recurrence, equidistribution, invariants}

\subjclass[2010]{Primary: 37A45; Secondary: 11P99, 37A30}

\date{}

\dedicatory{}

\maketitle

\begin{abstract}
Let $E \subset \bZ^N$ be a set of positive upper Banach density and let $\Gamma < \GL_N(\bZ)$ be a finitely generated, strongly irreducible subgroup whose Zariski closure in $\GL_N(\bR)$ is a Zariski connected semisimple
group with no compact factors. Let $Y$ be any set and suppose that $\Psi : \bZ^N \ra Y$ is a $\Gamma$-invariant
function. We prove that for every positive integer $m$, there exists a positive integer $k$ with the property that for
every finite set $F \subset \bZ^N$ with $|F| = m$, we have 
\[
\Psi(kF) \subset \Psi(E-b) \quad \textrm{for some $b \in E$}.
\]
Furthermore, if $E$ is an aperiodic Bohr$_o$-set, we can choose $k = 1$ and $b = 0$. As one of many applications 
of this result, we show that if $E_o \subset \bZ$ has positive upper Banach density, then, for any integer $m$, there 
exists an integer $k$ with the property for \emph{every} finite set $F \subset \bZ$, we can find $x,y,z \in E_o$ such
that
\[
k^2  F \subset \big\{ (u-x)^2 + (v-y)^2 - (w-z)^2 \, : \, u,v,w \in E_o \big\}.
\]
In particular, if $E_o \subset \bZ$ is an aperiodic Bohr$_o$-set, then every integer can be written on the form 
$u^2 + v^2 - w^2$ for some $u,v,w \in E_o$. Our techniques
use recent results by Benoist-Quint and Bourgain-Furman-Lindenstrauss-Mozes on equidistribution of random walks 
on automorphism groups of tori.
\end{abstract}

\section{Introduction}

We begin by recalling the following classical result of Furstenberg and Katznelson \cite{FKmulti}. The upper Banach density of a subset $E \subset \bZ^N$ will be defined in Appendix I.

\begin{theorem}Suppose that 
$E \subset \bZ^N$ has positive upper Banach density.
Then, for every finite set $F \subset \bZ^N$, there exists a positive integer $k$ such that
\begin{equation}
\label{fk}
kF \subset E - b, \quad \textrm{for some $b \in E$}.
\end{equation} 
The case $N = 1$ corresponds to Szemer\'edi's celebrated theorem on arithmetic progressions. 
\end{theorem}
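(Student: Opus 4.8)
The plan is to deduce this from the ergodic multiple recurrence theorem of Furstenberg and Katznelson via the Furstenberg correspondence principle. Begin by unravelling the conclusion: writing $F = \{f_1,\dots,f_m\}$, the requirement that $kF \subset E-b$ for some $b \in E$ says precisely that $E \cap \bigcap_{j=1}^m (E - kf_j) \neq \emptyset$. Hence it suffices to exhibit a positive integer $k$ for which this intersection has positive upper Banach density.

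Next, apply the correspondence principle: from $E \subset \bZ^N$ with $d^\ast(E)>0$ one builds a probability measure preserving system $(X,\cB,\mu)$ carrying a measure preserving action $v \mapsto T^v$ of $\bZ^N$, together with a set $A \in \cB$ with $\mu(A) = d^\ast(E)$, such that for every finite tuple $v_1,\dots,v_r \in \bZ^N$,
\[
d^\ast\Big( \bigcap_{i=1}^r (E - v_i)\Big) \;\ge\; \mu\Big( \bigcap_{i=1}^r T^{-v_i}A\Big).
\]
Taking $\{v_1,\dots,v_r\} = \{0\}\cup kF$, the theorem reduces to the ergodic statement: for any measure preserving $\bZ^N$-action, any $A$ with $\mu(A)>0$, and any finite $F \subset \bZ^N$, there is an integer $k \ge 1$ with $\mu\big(A \cap \bigcap_{f\in F}T^{-kf}A\big) > 0$. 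It is convenient to prove instead the Ces\`aro-positive strengthening $\liminf_{K\to\infty}\tfrac1K\sum_{k=1}^K \mu\big(A \cap \bigcap_{f\in F}T^{-kf}A\big) > 0$, since by Fatou this descends to the ergodic components of the action, so one may assume the action ergodic.

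To prove the recurrence, call a system \emph{SZ} if the above $\liminf$ is positive for every positive-measure $A$ and every finite $F$, and verify in turn: (i) the trivial one-point system is SZ; (ii) SZ passes to inverse limits; (iii) SZ is inherited under compact (relatively isometric) extensions; (iv) SZ is inherited under relatively weakly mixing extensions --- here one controls the averages $\tfrac1K\sum_{k\le K}\prod_{f\in F} T^{-kf}\mathbf{1}_A$ relative to the base by an iterated van der Corput estimate and shows that, in the weakly mixing situation, their $\liminf$ is at least $\big(\int_X \mathbf{1}_A\,d\mu\big)^{|F|+1}>0$. Finally, Furstenberg--Katznelson's structure theory for commuting transformations expresses an ergodic $X$ as the summit of a transfinite tower over the trivial system whose successive (primitive) extensions are each compact or relatively weakly mixing over an intermediate factor; applying (i)--(iv) together with a Zorn's lemma argument on the maximal SZ-factor of $X$ then forces $X$ itself to be SZ.

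The heart of the matter, and the main obstacle, is step three --- particularly property (iv) and the structure theorem for several commuting transformations. For a single transformation the characteristic factors and the distal structure theorem are classical (this is Furstenberg's proof of Szemer\'edi's theorem), but for commuting actions the characteristic factors are no longer given by the distal tower, and one must develop Furstenberg--Katznelson's machinery of primitive extensions and run a more intricate induction to identify them. By comparison, the correspondence principle of step two and the formal inheritance properties (ii) and (iii) of step three are routine.
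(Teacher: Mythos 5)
The paper does not prove this statement; it is quoted as classical background, citing Furstenberg--Katznelson \cite{FKmulti}, so there is no ``paper's own proof'' to compare against. Your proposal is a faithful high-level sketch of the standard Furstenberg--Katznelson argument: you correctly reformulate $kF \subset E-b$ for some $b \in E$ as the nonemptiness of $E \cap \bigcap_{f\in F}(E-kf)$, transfer via the correspondence principle to a measure-preserving $\bZ^N$-system $(X,\cB,\mu,T)$ with a set $A$ of measure $d^*(E)$, strengthen to Ces\`aro positivity so that the ergodic decomposition plus Fatou (the reduction actually \emph{lifts} from, rather than ``descends to,'' the ergodic components, but the intent is clear) reduces to the ergodic case, and then invoke the SZ-property machinery --- trivial system, inverse limits, compact extensions, relatively weakly mixing extensions, closed out by the primitive-extension structure theorem and a Zorn argument. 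You also correctly identify where the real difficulty lies: for commuting actions the characteristic factors are not furnished by the Furstenberg distal tower, and the van der Corput / relative weak mixing inductions and the transfinite structure theorem are genuinely harder than in the single-transformation case. As it stands the proposal is a correct outline, but its steps (iii), (iv), and the structure theorem are only gestured at; fully expanding them would essentially reproduce the body of \cite{FKmulti}, which is why the present paper simply cites that source.
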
 

This is an archetypal result in Arithmetic Ramsey theory. We stress the order of the quantifiers; 
\emph{the integer $k$ heavily depends on the finite set $F$}. In this paper we shall prove a 
"twisted" analogue of Furstenberg-Katznelson's Theorem, for which the dependence between 
the integer $k$ and the set $F$ disappears. To motivate this line of study, we begin by giving three applications.

\subsection{Quadratic forms}
A very influential result in Geometric Ramsey theory by Furstenberg, Katznelson and Weiss \cite{FKW} 
asserts that if $E \subset \bR^N$ is a Borel set with positive density in the sense that
\[
\limsup_{R \ra \infty} \frac{\Leb(E \cap B(R))}{R^N} > 0,
\]
where $\Leb$ denotes the Lebesgue measure on $\bR^N$ and $B(R)$ denotes the Euclidean ball of radius $R$ around the origin, then there exists $R_o > 0$ such that
\[
D(E) = \big\{ \|x-y\|^2 \, : \, x, y \in E \big\} \supset [R_o,\infty),
\]
where $\|\cdot\|$ denotes the Euclidean norm on $\bR^N$. In other words, all sufficiently large Euclidean distances are realized within the set $E$. Recently, Magyar 
\cite{Magyar} established the following discrete analogue of this phenomenon. 

\begin{theorem}[Theorem 1, \cite{Magyar}] Fix an integer $N \geq 5$ and let
\[ 
Q(x_1,\ldots,x_N) = x_1^2 + \ldots + x_N^2.
\]
Then, for every subset $E \subset \bZ^N$ of positive upper Banach density, there exist positive 
integers $R_o$ and $k$ such that
\[
k^2 \mathbb{Z} \cap \left[R_o, \infty \right) \subset Q(E-E).
\]
\end{theorem}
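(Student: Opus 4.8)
The plan is to establish this by running the Hardy--Littlewood circle method as a transference argument from a positive-density subset of a large box. Since $E$ has positive upper Banach density $\delta>0$, for every large $L$ there is a box of side $L$, which after translating we may take to be $[0,L)^N$, with $A:=E\cap[0,L)^N$ of size $|A|\geq\tfrac{\delta}{2}L^N$. Given a target value $\lambda$, choose $L\asymp\sqrt{\lambda}$ with a suitable absolute constant and consider
\[
R(\lambda)\;=\;\#\{(x,y)\in A\times A:\ Q(x-y)=\lambda\}\;=\;\int_0^1 e(-\lambda\alpha)\,T(\alpha)\,d\alpha,\qquad T(\alpha)=\sum_{h\in\bZ^N}g(h)\,e\bigl(\alpha Q(h)\bigr),
\]
where $\widetilde f(x)=f(-x)$ and $g:=\mathbf 1_A*\widetilde{\mathbf 1}_A\geq 0$ is the autocorrelation of $A$, supported in $(-L,L)^N$ with $\sum_h g(h)=|A|^2$. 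Any pair realising $R(\lambda)>0$ lies in $A\times A\subset E\times E$, so it suffices to produce integers $k=k(E)$ and $R_o=R_o(E)$ with $R(\lambda)>0$ for every $\lambda\in k^2\bZ$ with $\lambda\geq R_o$; this yields $k^2\bZ\cap[R_o,\infty)\subset Q(E-E)$.

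\emph{Minor arcs, where the hypothesis $N\geq5$ enters.} Dissect $\bT=[0,1)$ into major arcs --- short intervals about rationals $a/q$ with $(a,q)=1$ and $q\leq L^{\theta}$ for a small fixed $\theta>0$ --- and a minor-arc remainder $\mathfrak m$. Weyl differencing applied to the quadratic phase $\alpha Q(h)=\alpha\sum_i h_i^2$ against the weight $g$ shows that $\sup_{\alpha\in\mathfrak m}|T(\alpha)|$ beats the trivial bound $|A|L^N$ by a fixed positive power of $L$, precisely when $N\geq5$ (for $N=3,4$ one needs Kloosterman's refinement). Consequently, for $\lambda\geq R_o(\delta)$ the minor-arc contribution to $R(\lambda)$ is negligible beside the expected main term, which is of order $\delta^{2}L^{N}\lambda^{N/2-1}$.

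\emph{Major arcs and the extraction of $k$.} On $\mathfrak M(q,a)$ write $\alpha=a/q+\beta$; since $Q(h)\bmod q$ depends only on $h\bmod q$, the contribution organises into local factors, one per modulus $q$, recording how $A$ (equivalently $g$) distributes over the residue classes of $q\bZ^{N}$. Summing the resulting complete exponential sums, the ``generic'' part --- the aggregate of all contributions from moduli $q\nmid k$ --- reassembles into a positive main term $\gg\delta^{2}L^{N}\lambda^{N/2-1}$ times a singular series bounded below for $N\geq5$. The remaining ``structured'' part comes from the few small moduli at which $g$ genuinely concentrates; but a set of density $\delta$ cannot be confined to a union of $t$ cosets of $m\bZ^{N}$ once $t(L/m+1)^{N}<\tfrac{\delta}{2}L^{N}$, so all such moduli divide a single integer $k=k(E)$, bounded in terms of $\delta$ alone. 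For each of these moduli the associated complete sum has the shape $\sum_{j\bmod k^{2}}e(-\lambda j/k^{2})$: it vanishes unless $k^{2}\mid\lambda$ --- the obstruction already visible for $E=m\bZ^{N}$, where $Q(E-E)\subset m^{2}\bZ$ --- and when $k^{2}\mid\lambda$ it is non-negative and adds to the generic term. Hence $R(\lambda)\geq(\text{generic main term})-(\text{minor-arc error})>0$ for all $\lambda\in k^{2}\bZ$ with $\lambda\geq R_o$, and letting $\lambda$ (hence $L$) grow proves the theorem.

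\emph{Main obstacle.} The crux is the structured major-arc analysis: to prove, quantitatively and uniformly in $L$, that the autocorrelation of a density-$\delta$ subset of a box concentrates on rationals only at denominators dividing one fixed modulus $k=k(\delta)$, and then to track the relevant complete exponential sums accurately enough to see that their residual contribution along $k^{2}\bZ$ is genuinely non-negative rather than merely $O(1)$. By contrast the minor-arc estimate, while routine, is the one step for which the assumption $N\geq5$ is indispensable.
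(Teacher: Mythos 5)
The paper does not prove this statement: it quotes Theorem~1 of \cite{Magyar} as motivation and, immediately before Theorem~\ref{cons3}, explicitly points out that its own machinery does not apply to Magyar's positive-definite forms since $\SO(N)(\bR)$ is compact. So there is no ``paper's own proof'' to compare against; what follows is an assessment of your circle-method sketch on its own terms.

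The outline captures the right family of techniques (transfer to a box, major/minor arcs, extract a modulus $k$ from the structure of $E$), and you are right that $N\geq 5$ is the relevant threshold for the elementary minor-arc bound. The gap is exactly where you flag it, and the mechanism you propose for producing $k$ does not work. Your claim is that ``a set of density $\delta$ cannot be confined to a union of $t$ cosets of $m\bZ^N$ once $t(L/m+1)^N<(\delta/2)L^N$, so all such moduli divide a single integer $k=k(E)$, bounded in terms of $\delta$ alone.'' The counting bound only lower-bounds the number of cosets; it places no upper bound on $m$, since a density-$\delta$ set can perfectly well be a union of $\sim\delta m^N$ cosets of $m\bZ^N$ for $m$ as large as you like. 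Worse, the moduli at which the truncation $A=E\cap[0,L)^N$ has appreciable Fourier bias are not a priori the same for different $L$, and nothing in a one-shot pigeonhole forces them to have a bounded least common multiple uniformly in $L$; Parseval bounds the \emph{number} of biased moduli by $O(\delta^{-2})$, but not their product. What closes this gap in Magyar's argument (and in the adjacent Bourgain--Magyar line) is an iteration --- a density/energy-increment scheme in which one either obtains the asymptotic at the current modulus, or locates a modulus where $A$ concentrates and passes to the corresponding subprogression with strictly larger relative density, so that the process halts after $O_\delta(1)$ steps and $k$ is the product accumulated along the way. That iteration is precisely what furnishes the $L$-uniformity your major-arc analysis needs. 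As written, your proposal correctly identifies the obstacle but substitutes a pigeonhole claim that is false in the strength required, so the proof does not close.
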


Our first application consists of an analogue of Magyar's result for \emph{indefinite} quadratic forms. 
Contrary to Magyar's result, we focus here \emph{not} on the values of $Q$ restricted to a 
\emph{difference set} of a set $E \subset \bZ^N$ of positive upper Banach density, but rather 
we study the values of $Q$ restricted to \emph{some} translate of the set $E$. We stress that 
our techniques do not apply to the quadratic forms in Magyar's Theorem as the (real points) of
the symmetry group $\SO(N)$ is compact. For the notion of an (aperiodic) 
Bohr set we refer the reader to Section \ref{Bohr}.

\begin{theorem}
\label{cons3}
Let $p,q \geq 1$ and $p+q \geq 3$ and $E \subset \bZ^{p+q}$ a set of positive upper Banach
density. Let $Q$ denote the quadratic form on $\bR^{p+q}$ defined by
\[
Q(z) = \sum_{i=1}^p \mu_i x_i^2 - \sum_{j=1}^q \lambda_j y_j^2, 
\quad 
\textrm{for $z = (x_1,\ldots,x_p,y_1,\ldots, y_q) \in \bR^{p+q}$},
\]
where $\mu_1,\ldots,\mu_p$ and $\lambda_1,\ldots,\lambda_q$ are positive integers. Let 
$m$ be a positive integer. Then there exists a positive integer $k$ with the property that
for every finite subset $F \subset \bZ^{p+q}$ with $|F| = m$, we have
\[
k^2 \, Q(F) \subset Q(E-b), \quad \textrm{for some $b \in E$}.
\]
If $E$ is an aperiodic Bohr set, then $k$ can be chosen to be $1$. In particular, if $E$ is 
an aperiodic $\textrm{Bohr}_o$-set, then $Q(E) = Q(\bZ^{p+q})$.
\end{theorem}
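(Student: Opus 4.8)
The plan is to deduce Theorem~\ref{cons3} directly from the main theorem announced in the abstract, applied to the $\Gamma$-invariant function $\Psi = Q$ itself. Put $N = p+q$ and let $\bfG = \SO(Q)$ denote the special orthogonal group of $Q$, regarded as a linear algebraic group defined over $\bQ$, and set
\[
\Gamma := \bfG(\bZ) = \{ g \in \SL_N(\bZ) : Q(gz) = Q(z) \ \text{for all}\ z \in \bZ^N \} < \GL_N(\bZ).
\]
By construction $\Psi = Q : \bZ^N \to \bZ$ is $\Gamma$-invariant, so the theorem will follow once we verify that $\Gamma$ meets the standing hypotheses of the main theorem and then unwind the conclusion.

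First I would record the structure of $\bfG$. Since $Q$ is a nondegenerate integral quadratic form of rank $N = p+q \geq 3$, the group $\bfG = \SO(Q)$ is Zariski connected (the algebraic group $\SO_N$ is connected), semisimple, and over $\bR$ one has $\bfG(\bR) \cong \SO(p,q)$, which has real rank $\min(p,q) \geq 1$ and, because $Q$ is indefinite, no compact factors (the only signature with $p+q \geq 3$, $p,q\geq 1$ for which $\bfG(\bR)$ fails to be almost simple is $(p,q)=(2,2)$, where it is isogenous to $\SL_2(\bR) \times \SL_2(\bR)$, still with no compact factors). By the theorem of Borel and Harish-Chandra, $\Gamma = \bfG(\bZ)$ is a lattice in $\bfG(\bR)$, hence finitely generated, and by the Borel density theorem it is Zariski dense in $\bfG$; thus the Zariski closure of $\Gamma$ in $\GL_N(\bR)$ is exactly $\bfG$, which has all the required properties. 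For strong irreducibility, note that as $N \geq 3$ the standard representation of $\bfG$ on $\bR^N$ is absolutely irreducible; if a finite union $V_1 \cup \cdots \cup V_r$ of proper nonzero subspaces of $\bR^N$ were $\Gamma$-invariant, we may assume the finite set $\{V_1,\dots,V_r\}$ is itself $\Gamma$-invariant, and then the finite-index subgroup of $\Gamma$ stabilizing each $V_i$ is again Zariski dense in the connected group $\bfG$, forcing each $V_i$ to be $\bfG$-invariant and contradicting irreducibility. Hence $\Gamma$ acts strongly irreducibly on $\bR^N$.

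Finally I would apply the main theorem with this $\Gamma$ and $\Psi = Q$. For each positive integer $m$ it yields a positive integer $k$ such that every $F \subset \bZ^N$ with $|F| = m$ satisfies $Q(kF) \subset Q(E-b)$ for some $b \in E$; since $Q$ is a quadratic form, $Q(kF) = k^2 Q(F)$, which is the first assertion. If $E$ is an aperiodic Bohr set the main theorem permits $k = 1$, and if $E$ is an aperiodic $\textrm{Bohr}_o$-set it permits $k = 1$ and $b = 0$, so that $Q(F) \subset Q(E)$ for every finite $F \subset \bZ^N$; taking the union over singletons $F = \{z\}$ gives $Q(\bZ^N) \subset Q(E) \subset Q(\bZ^N)$, i.e. $Q(E) = Q(\bZ^N)$. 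The substantive content is carried entirely by the main theorem; the only point here requiring care is the verification of the algebraic-group hypotheses --- chiefly the absence of compact factors and strong irreducibility --- which is a routine application of the classical structure theory of orthogonal groups, with the low-dimensional coincidence $\SO(2,2) \sim \SL_2(\bR)\times\SL_2(\bR)$ the sole case needing separate mention.
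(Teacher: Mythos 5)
Your proposal is correct and follows essentially the same route as the paper: take $\Gamma = \SO(Q)(\bZ)$, verify it is a finitely generated, strongly irreducible subgroup whose Zariski closure is Zariski connected, semisimple, and without compact factors, then apply Theorem~\ref{general} with $\Psi = Q$ and use the homogeneity $Q(kF) = k^2 Q(F)$. The paper carries out the same verification in Appendix~II, so there is no real divergence in strategy. One small stylistic difference: you establish strong irreducibility via the ``no invariant finite union of proper subspaces'' characterization, whereas the paper argues directly from the definition (a finite-index subgroup of $\Gamma$ still has Zariski closure equal to the Zariski-connected $\bfG$, hence preserves no proper subspace); the two arguments are equivalent, and yours silently uses the standard fact that the finite-union condition implies the finite-index-subgroup condition. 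Another minor point: you assert absolute irreducibility of the standard representation and absence of compact factors as known structure theory for $\SO(p,q)$ (correctly flagging the $(2,2)$ isogeny to $\SL_2\times\SL_2$), whereas the paper spells these out via a self-contained lemma, and in particular is careful that ``Zariski connected'' must be checked for the real points $\SO(p,q)(\bR)$ (which has two Euclidean components), not merely for the abstract algebraic group $\SO_N$; this step deserves a sentence in your writeup. Finally, you quote ``$k=1$ and $b=0$'' for aperiodic Bohr$_o$-sets following the abstract's phrasing; the statement of Theorem~\ref{general} instead gives $\Psi(E)=\Psi(\bZ^N)$ directly (the paper achieves the ``$b=0$'' effect by passing to a smaller Bohr$_o$-set $C$ with $C-C\subset E$), but either formulation yields $Q(E)=Q(\bZ^{p+q})$.
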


Let us now explain how the application mentioned in the abstract of this paper follows from 
Theorem \ref{cons3}. Let $N = 3$ and consider the quadratic form (with $p = 2$, $q = 1$) 
given by
\[
Q(x,y,z) = x^2 + y^2 - z^2, \quad \textrm{for $(x,y,z) \in \bZ^3$}.
\]
Since every odd integer is a difference of two consequtive squares, this form satisfies $Q(\bZ^3) = \bZ$. In particular, for every 
finite subset $F \subset \bZ$, we can find a subset $F_o \subset \bZ^3$ of the same cardinality 
as $F$ such that $Q(F_o) = F$. We note that if $E_o \subset \bZ$ has positive upper Banach density, then 
$E := E_o \times E_o \times E_o \subset \bZ^3$ has positive upper Banach density as well. 
Hence, by Theorem \ref{cons3}, we conclude that for every positive integer $m$, there exists 
a positive integer $k$ such that for every finite set $F \subset \bZ$ with $|F| = m$,
we have
\[
k^2 F \subset Q((E_o - x) \times (E_o - y) \times (E_o - z)), \quad \textrm{for some $x,y,z \in E_o$},
\]
Finally, if $E_o \subset \bZ$ is a Bohr$_o$-set, then $E = E_o \times E_o \times E_o \subset \bZ^3$ is again 
a Bohr$_o$-set, so by the second assertion of Theorem \ref{cons3} we can in this case conclude that every
integer can be written on the form $u^2 + v^2 - w^2$ for some $u,v,w \in E_o$.

\subsection{Characteristic polynomials and their Galois groups}

Our second example concerns characteristic polynomials of integer square matrices with zero trace. Let $\textrm{Mat}_d(\bZ)$
denote the additive group of integer matrices, and define the subgroup $\Lambda_d < \textrm{Mat}_d(\bZ)$ by
\[
\Lambda_d = \{ a \in \text{Mat}_{d}(\bZ) \, : \, \trace(a)=0 \}.
\]
Given a matrix $a \in \Lambda_d$, we write $\cC(a)=\det(tI - a) \in \mathbb{Z}[t]$ to denote its characteristic 
polynomial. We note that the map $\cC : \Lambda_d \ra \bZ[t]$ satisfies $\cC(\gamma a \gamma^{-1}) = \cC(a)$ for all $a \in \Lambda_d$ and $\gamma \in \GL_d(\bZ)$. \\

The following theorem is an extension of a very recent result by the first author and A. Fish in the paper \cite{BjFish},
to which the current paper owes the initial ideas.  

\begin{theorem}
\label{cons1}
Let $d \geq 2$ and $E \subset \Lambda_d$ a set of positive upper Banach density. Let 
$m$ be a positive integer. Then there exists a positive integer $k$ with the property that
for every finite subset $F \subset \Lambda_d$ with $|F| = m$, we have
\[
\cC(kF) \subset \cC(E-b), \quad \textrm{for some $b \in E$}.
\]
If $E$ is an aperiodic Bohr set, then $k$ can be chosen to be $1$. In particular, if $E$ is 
an aperiodic $\textrm{Bohr}_o$-set, then $\cC(E) = \cC(\Lambda_d)$.
\end{theorem}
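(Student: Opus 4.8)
The plan is to deduce Theorem~\ref{cons1} from the main theorem of this paper by exhibiting $\cC$ as a $\Gamma$-invariant function for a suitable arithmetic group $\Gamma$. Put $N = d^2 - 1$ and fix a $\bZ$-basis of the free abelian group $\Lambda_d$, giving an isomorphism $\Lambda_d \cong \bZ^N$; as an isomorphism of $\bZ$-modules it preserves upper Banach density, commutes with dilation by integers, and carries aperiodic Bohr sets (resp.\ aperiodic $\textrm{Bohr}_o$-sets) to sets of the same kind. Let $\Gamma$ be the image of $\SL_d(\bZ)$ under its adjoint (conjugation) action on $\Lambda_d$. This action preserves the lattice $\Lambda_d$, so under the above identification $\Gamma < \GL_N(\bZ)$, and $\Gamma$ is finitely generated because $\SL_d(\bZ)$ is. Finally, the identity $\cC(\gamma a \gamma^{-1}) = \cC(a)$ for $\gamma \in \GL_d(\bZ)$ shows that $\cC : \Lambda_d \to \bZ[t]$ is $\Gamma$-invariant, so $\cC$ can play the role of $\Psi$ (with $Y = \bZ[t]$).

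Next I would check that $\Gamma$ satisfies the structural hypotheses of the main theorem. By Borel's density theorem $\SL_d(\bZ)$ is Zariski dense in $\SL_d$, so the Zariski closure of $\Gamma$ in $\GL_N(\bR)$ equals the image of $\SL_d$ under the adjoint representation, i.e.\ the adjoint group $\PSL_d$. This group is Zariski connected (the image of the Zariski connected group $\SL_d$ under an algebraic group homomorphism), it is simple and hence semisimple, and it has no compact factors, since its real points contain $\PSL_d(\bR)$, a non-compact simple Lie group for $d \geq 2$. It remains to see that $\Gamma$ is strongly irreducible. The adjoint representation of $\SL_d$ on $\Lambda_d \otimes \bR$ is absolutely irreducible, and a Zariski-dense subgroup of a Zariski connected linear algebraic group acting irreducibly on a vector space is automatically strongly irreducible: a $\Gamma$-invariant finite union of proper nonzero subspaces is permuted by $\Gamma$, so the $\Gamma$-stabilizer of one of the summands has finite index in $\Gamma$, hence remains Zariski dense in $\PSL_d$, and therefore leaves that summand invariant, contradicting irreducibility. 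This verification, routine as it is, is the only point of real substance in the argument; the structural facts about the adjoint representation of $\SL_d$ are exactly what force the hypothesis $d \geq 2$.

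With the hypotheses in hand, applying the main theorem to $E$, the group $\Gamma$ and the invariant $\Psi = \cC$ gives, for each positive integer $m$, an integer $k$ such that $\cC(kF) \subset \cC(E - b)$ for some $b \in E$ whenever $F \subset \Lambda_d$ with $|F| = m$; the refinements for aperiodic Bohr sets ($k = 1$) and for aperiodic $\textrm{Bohr}_o$-sets ($k = 1$ and $b = 0$) are inherited from the corresponding refinements in the main theorem. Finally, if $E$ is an aperiodic $\textrm{Bohr}_o$-set, then taking $m = 1$ and applying the previous sentence with $F = \{a\}$ for each $a \in \Lambda_d$ gives $\cC(a) \in \cC(E)$, so $\cC(\Lambda_d) \subset \cC(E)$; the reverse inclusion is immediate from $E \subset \Lambda_d$, whence $\cC(E) = \cC(\Lambda_d)$. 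The only conceivable obstacle is bookkeeping around the non-canonical identification $\Lambda_d \cong \bZ^N$, but since it is a $\bZ$-module isomorphism it transports density, dilation, Bohr structure and the $\Gamma$-action faithfully, so nothing is lost.
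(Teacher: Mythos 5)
Your proof is correct and follows essentially the same route as the paper: identify $\Lambda_d \cong \bZ^{d^2-1}$, take $\Gamma = \Ad(\SL_d(\bZ))$, verify via Borel density and simplicity of $\mathfrak{sl}_d$ that $\Gamma$ is finitely generated, strongly irreducible, and has Zariski-connected semisimple Zariski closure with no compact factors, then invoke Theorem~\ref{general}. This is precisely the content of Appendix~II of the paper (the general proposition about polynomial homomorphisms, the Zariski closure and irreducibility lemmas for the adjoint action, and the lemma that irreducibility plus Zariski connectedness of the closure yields strong irreducibility), so the two arguments agree in substance.
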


\begin{remark}
During the finalization of this paper, the authors were informed by A. Fish that he had independently proved the 
last assertion in Theorem \ref{cons1} (concerning aperiodic Bohr$_o$ sets); see \cite{Fish}.
\end{remark}

Given $a \in \Lambda_d$, we denote by $\bQ_a$ the field generated by the eigenvalues of $a$, or equivalently,
the splitting field of the polynomial $\cC(a)$. We note that 
\[
\bQ_{k a} = \bQ_a \qand \bQ_{\gamma a \gamma^{-1}} = \bQ_a, \quad \textrm{for all $k \in \bQ^*$ and $\gamma \in \GL_d(\bZ)$}.
\]
Given $P \in \bZ[t]$, we let $\cG(P)$ denote the Galois group (over $\bQ$) of the splitting field of $P$. Thus $\cG(\cC(a))$ is the Galois group of the field extension $\bQ_a/\bQ$. Since each $\cC(a)$ is a monic polynomial of degree $d$, we see that each $\cG(\cC(a))$ is a subgroup of the symmetric group $S_d$. Let $\cG_d$ denote the set of all possible subgroups $\cG(\cC(a)) < S_d$ as $a$ ranges over $\Lambda_d$. From the relations above, we see that
\[
\cG(\cC(ka)) = \cG(\cC(a)) \qand \cG(\cC(\gamma a \gamma^{-1})) = \cG(\cC(a)), \quad \textrm{for all $k \in \bN^*$ and $\gamma \in \GL_d(\bZ)$}.
\]

Let $F \subset \bZ^N$ be a finite set such that $\cG(F) = \cG_d$. Upon applying the map $\cG$ to the sets $\cC(kF)$ and $\cC(E-b)$ in Theorem \ref{cons1}, we have established the following corollary.
\begin{corollary}
Let $d \geq 2$ and suppose that $E \subset \Lambda_d$ is a set of positive upper Banach density. Then
there exists $b \in E$ such that $\cG_d \subset \cG(\cC(E-b))$, i.e. all possible Galois groups can be found in 
some translate of $E$.
\end{corollary}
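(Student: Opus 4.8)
The plan is to deduce this directly from Theorem \ref{cons1}, the whole point being that $\cG_d$ is a \emph{finite} set. Indeed, by its very definition $\cG_d$ is a collection of subgroups of the finite group $S_d$, hence finite; write $\cG_d = \{G_1,\dots,G_r\}$. For each index $i$ I would choose, using the definition of $\cG_d$, a matrix $a_i \in \Lambda_d$ with $\cG(\cC(a_i)) = G_i$, and set $F = \{a_1,\dots,a_r\}$, a finite subset of $\Lambda_d$ with $|F| = m$ for some $m \leq r$. (Recall that $\Lambda_d$ is an abelian group isomorphic to $\bZ^{d^2-1}$, so Theorem \ref{cons1} applies to it.)

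Next I would apply Theorem \ref{cons1} to this particular $F$, with the integer $m = |F|$: there is a positive integer $k$ and an element $b \in E$ such that $\cC(kF) \subset \cC(E-b)$, i.e.\ for every $a \in F$ one has $\cC(ka) = \cC(c)$ for some $c \in E-b$.

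Finally I would apply the Galois-group assignment $\cG$ to this inclusion of sets of polynomials. On the one hand, since $\cG(\cC(ka)) = \cG(\cC(a))$ for every $a \in \Lambda_d$ and every $k \in \bN^*$, the image of $\cC(kF)$ under $\cG$ equals $\{\cG(\cC(a)) : a \in F\} = \{G_1,\dots,G_r\} = \cG_d$. On the other hand, the inclusion $\cC(kF) \subset \cC(E-b)$ immediately yields $\{\cG(P) : P \in \cC(kF)\} \subset \{\cG(P) : P \in \cC(E-b)\} = \cG(\cC(E-b))$. Combining the two gives $\cG_d \subset \cG(\cC(E-b))$, which is exactly the claim.

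There is no real analytic obstacle in this deduction — all the substance is hidden in Theorem \ref{cons1}. The single point worth emphasizing is the finiteness of $\cG_d$: it is precisely this that lets us fix one finite configuration $F$ realizing \emph{all} Galois groups simultaneously before invoking Theorem \ref{cons1}. Were $\cG_d$ infinite the argument would collapse, since Theorem \ref{cons1} supplies a translate $E-b$ adapted to one prescribed finite set $F$ at a time, with $k$ (and hence $b$) depending on $|F|$.
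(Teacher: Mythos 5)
Your proof is correct and is essentially the argument the paper itself gives (the paper's one-sentence proof simply chooses a finite $F$ with $\cG(\cC(F)) = \cG_d$ and applies $\cG$ to the inclusion $\cC(kF) \subset \cC(E-b)$ from Theorem \ref{cons1}); you have merely spelled out the finiteness of $\cG_d$ and the invariance $\cG(\cC(ka)) = \cG(\cC(a))$ more explicitly.
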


This result should be compared with Gallagher's Theorem \cite{Gallagher} which asserts that "most" irreducible
monic polynomials with integer coefficients have Galois group $S_d$.

\subsection{Determinants of symmetric matrices}

Our final example involves determinants of symmetric integer matrices. We let $\Sym_d = \{ a \in \text{Mat}_{d}(\bZ)\text{ } |\text{ } a=a^t  \}$ denote the set of all symmetric $d \times d$ integer matrices.

\begin{theorem}
\label{cons2}
Let $d \geq 2$ and $E \subset \Sym_d$ a set of positive upper Banach density. Let 
$m$ be a positive integer. Then there exists a positive integer $k$ with the property that
for every finite subset $F \subset \Sym_d$ with $|F| = m$, we have
\[
k^d \, \det(F) \subset \det(E-b), \quad \textrm{for some $b \in E$}.
\]
If $E$ is an aperiodic Bohr set, then $k$ can be chosen to be $1$. In particular, if $E$ is 
an aperiodic $\textrm{Bohr}_o$-set, then $\det(E) = \bZ$.
\end{theorem}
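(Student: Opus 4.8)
The plan is to deduce this from the main twisted Furstenberg–Katznelson theorem announced in the abstract, exactly as Theorem \ref{cons3} and Theorem \ref{cons1} were deduced. The key is to realize $\det$ on $\Sym_d$ as a $\Gamma$-invariant function for a suitable arithmetic group $\Gamma$ acting on the lattice $\Sym_d \cong \bZ^N$ with $N = \binom{d+1}{2}$. The natural candidate is the action of $\GL_d$ on $\Sym_d$ by $g \cdot a = g a g^t$; this is a polynomial representation of $\GL_d$ on $\Sym_d$, and under it $\det(g a g^t) = \det(g)^2 \det(a)$, so restricting to $\Gamma = \SL_d(\bZ)$ gives an action by lattice automorphisms under which $\det$ is genuinely invariant. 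One first checks that $\SL_d(\bZ)$, embedded this way in $\GL_N(\bZ)$, satisfies the hypotheses of the main theorem: the Zariski closure of the image is the image of $\SL_d$ under this representation, which is Zariski connected and semisimple with no compact factors (it is a quotient of $\SL_d(\bR)$ by a finite center, and $\SL_d$ has no compact factors for $d \geq 2$), and strong irreducibility of the representation $\Sym^2(\text{std})$ (equivalently, that no finite union of proper subspaces is invariant) must be verified — for $d\ge 2$ the symmetric square of the standard representation of $\SL_d$ is irreducible, which gives strong irreducibility since the group is Zariski connected.

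Granting this, apply the main theorem with $Y = \bZ$ and $\Psi = \det : \Sym_d \to \bZ$: for every $m$ there is a $k$ so that for every $F \subset \Sym_d$ with $|F| = m$ one has $\det(kF) \subset \det(E - b)$ for some $b \in E$. Since $\det(ka) = k^d \det(a)$ for $a \in \Sym_d$, the left-hand side is $k^d \det(F)$, which yields the displayed inclusion $k^d\det(F) \subset \det(E-b)$. The aperiodic-Bohr$_o$ refinement is immediate from the corresponding clause of the main theorem (take $k = 1$, $b = 0$), giving $\det(E) \supseteq \det(\Sym_d)$; and $\det(\Sym_d) = \bZ$ because, e.g., the symmetric matrix $\mathrm{diag}(n,1,\ldots,1)$ has determinant $n$ for every $n \in \bZ$. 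Hence $\det(E) = \bZ$.

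The main obstacle is the verification that the hypotheses of the master theorem are met by the representation $\SL_d(\bZ) \acts \Sym_d$ — concretely, strong irreducibility. One must rule out that the automorphisms $a \mapsto gag^t$, $g \in \SL_d(\bZ)$, preserve a finite union of proper rational subspaces of $\Sym_d \otimes \bR$. The cleanest route is: Zariski density of $\SL_d(\bZ)$ in $\SL_d$ (Borel density theorem) reduces this to showing the algebraic representation $\Sym^2$ of $\SL_d$ is irreducible and that $\SL_d$ is connected, so any invariant finite union of subspaces is a single invariant subspace, hence $0$ or everything. A small point to be careful about is the identification $\det(k a) = k^d\det(a)$ and that scaling $\Sym_d$ by $k$ keeps us inside $\Sym_d$, which is clear. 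I also need that $E = E_o$ being an aperiodic Bohr$_o$-set is preserved under nothing here — in the determinant case no product construction is needed, so this is automatic. Everything else is a routine transcription of the arguments already given for Theorems \ref{cons3} and \ref{cons1}.
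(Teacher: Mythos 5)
Your proposal is correct and follows essentially the same route as the paper: Theorem~\ref{cons2} is deduced from Theorem~\ref{general} via the action $g\cdot a = gag^t$ of $\SL_d(\bZ)$ on $\Sym_d$, checking that the Zariski closure of the image is a Zariski connected semisimple group with no compact factors (via Borel density for Zariski density of $\SL_d(\bZ)$ and the stability of these properties under algebraic representations) and that the representation is strongly irreducible, then using $\det(ka)=k^d\det(a)$ and $\det(\Sym_d)=\bZ$. The only minor divergence is that you invoke the classical irreducibility of $\Sym^2$ of the standard $\SL_d$-representation as a known fact, whereas the paper gives a short hands-on verification via orbits of diagonal matrices; both are fine.
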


In particular, let $E_o \subset \bZ$ be an aperiodic Bohr$_o$-set, and define 
\[
E = \Big\{
\left(
\begin{matrix}
x & z \\
z & y 
\end{matrix}
\right) 
\, : \, 
x,y,z \in E_o
\Big\} \subset \Sym_2.
\]
Then $E$ is a Bohr$_o$-set in $\Sym_2 \cong \bZ^3$ to which Theorem \ref{cons2} can applied to
yield the following corollary.

\begin{corollary}
Suppose that $E_o \subset \bZ$ is an aperiodic Bohr$_o$-set. Then,
\[
\{ xy - z^2 \, : \, x,y,z \in E_o \big\} = \bZ.
\]
\end{corollary}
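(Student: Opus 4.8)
The plan is to realize the set $\{xy - z^2 \,:\, x,y,z \in E_o\}$ as the determinant image of a Bohr$_o$-set of symmetric $2 \times 2$ matrices, and then to invoke the last assertion of Theorem~\ref{cons2} with $d = 2$. Concretely, I would fix the group isomorphism $\iota : \bZ^3 \xrightarrow{\ \sim\ } \Sym_2$ given by $\iota(x,y,z) = \left(\begin{smallmatrix} x & z \\ z & y\end{smallmatrix}\right)$ and put $E = \iota(E_o \times E_o \times E_o)$. Since $\det \iota(x,y,z) = xy - z^2$, we have $\det(E) = \{xy - z^2 \,:\, x,y,z \in E_o\}$, so it suffices to prove $\det(E) = \bZ$.

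For this, the key step is to check that $E$ is an aperiodic Bohr$_o$-set in $\Sym_2 \cong \bZ^3$. This rests on two elementary permanence properties of the class of aperiodic Bohr$_o$-sets, both immediate from the definition recalled in Section~\ref{Bohr}. First, if $B_1, B_2, B_3 \subset \bZ$ are aperiodic Bohr$_o$-sets, given by homomorphisms $\phi_i : \bZ \to \bT^{d_i}$ together with radii, then $B_1 \times B_2 \times B_3 \subset \bZ^3$ is the Bohr$_o$-set attached to $\phi_1 \oplus \phi_2 \oplus \phi_3 : \bZ^3 \to \bT^{d_1+d_2+d_3}$ with radius $\min_i \varepsilon_i$; it is again aperiodic because the closure of the image of $\phi_1 \oplus \phi_2 \oplus \phi_3$ is the product of the closures of the images of the $\phi_i$, hence a connected subgroup of the ambient torus. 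Second, the class of (aperiodic) Bohr$_o$-sets is preserved under any group isomorphism $\bZ^3 \cong \Sym_2$, since such an isomorphism simply transports the defining homomorphism and radius. Applying these with $B_1 = B_2 = B_3 = E_o$ shows that $E$ is an aperiodic Bohr$_o$-set.

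Finally, applying the last assertion of Theorem~\ref{cons2} with $d = 2$ to the aperiodic Bohr$_o$-set $E \subset \Sym_2$ yields $\det(E) = \bZ$, and combining this with the identity $\det(E) = \{xy - z^2 \,:\, x,y,z \in E_o\}$ from the first paragraph completes the proof. I do not expect any genuine obstacle here: the substantive content has already been reduced, in the discussion preceding the corollary, to Theorem~\ref{cons2}, and the only thing left to spell out is the routine permanence of aperiodic Bohr$_o$-sets under Cartesian products and under linear changes of coordinates.
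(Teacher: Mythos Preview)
Your proposal is correct and is exactly the argument the paper gives: one identifies $\Sym_2$ with $\bZ^3$ via $\iota$, notes that $E=\iota(E_o\times E_o\times E_o)$ is an aperiodic Bohr$_o$-set (the paper records the relevant product permanence in the Example of Section~\ref{Bohr}), and then applies the last clause of Theorem~\ref{cons2} with $d=2$. One cosmetic point: your parenthetical description of Bohr$_o$-sets via maps $\phi_i:\bZ\to\bT^{d_i}$ and radii is narrower than the definition in Section~\ref{Bohr} (general compact connected abelian $K$ and open $U$), so when you write this up you should phrase the product argument in those terms.
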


\subsection{Invariant patterns in sets of positive upper Banach density}

We now turn to generalizing the three examples above. The main idea is that the functions presented in those examples (the quadratic forms, the characteristic polynomial map and the determinant map) are all invariant under certain linear actions. More specifically, the quadratic form $Q$ in Theorem~\ref{cons3} is preserved by $\SO(Q)(\bZ)$; the characteristic polynomial map $\cC$ and the Galois group map $\cG$ on $\Lambda_d$ are
both preserved by the conjugation action of $\SL_d(\bZ)$ on $\Lambda_d$; while the determinant map is preserved by the action of $\SL_d(\bZ)$ on $\Sym_d$ given by $\gamma \cdot a=\gamma a \gamma^t$. One of the main goals of this paper is to establish the following general result, to which the examples above apply (this will be verified in Section~\ref{appendix: BQ verification}). 

\begin{definition} A subgroup $\Gamma \leq \GL_N(\mathbb{R})$ is said to be \textit{strongly irreducible} if for every finite index subgroup $\Gamma' \leq \Gamma$, the standard representation of $\Gamma'$ on $\mathbb{R}^N$ is irreducible. We say that a Zariski connected real algebraic group $G$ has \textit{no compact factors} if every Zariski-continuous group homomorphism $\rho:G \to \GL_r(\bR)$ with bounded image is trivial (cf. Section 2 in \cite{BQ2}). To avoid confusion when necessary (in Section~\ref{appendix: BQ verification}), the usual Lie group theoretic compact factors will be referred to as the \textit{compact Lie group factors}.

\end{definition}

\begin{theorem}
\label{general}
Let $\Gamma < \GL_N(\bZ)$ be a non-trivial finitely generated strongly irreducible subgroup whose Zariski closure in $\GL_N(\bR)$ is a Zariski connected semisimple group with no compact factors. Let $Y$ be a set and suppose that $\Psi : \bZ^N \ra Y$ is a $\Gamma$-invariant function. For every $E \subset \bZ^N$ of positive upper Banach density and $m \geq 1$, there exists a positive integer 
$k$ with the property that whenever $F \subset \bZ^N$ is a finite set of cardinality $m$, then 
\[
\Psi(kF) \subset \Psi(E-b), \quad \textrm{for some $b \in E$}.
\]
Moreover, if $E \subset \bZ^N$ is an aperiodic $\textrm{Bohr}$-set, then $k$ can be chosen to be $1$. In particular, if $E$ is an aperiodic $\textrm{Bohr}_o$-set, then $\Psi(E) = \Psi(\bZ^N)$.
\end{theorem}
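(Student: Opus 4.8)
Here is the strategy I would follow.

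\smallskip
\noindent\emph{Reduction.} Since $\Psi$ is $\Gamma$-invariant, $\Psi(kF)\subseteq\Psi(E-b)$ holds as soon as I can exhibit one $b\in E$ and elements $\gamma_1,\dots,\gamma_m\in\Gamma$ with $b+\gamma_i(kx_i)\in E$ for all $i$, because then $\Psi(kx_i)=\Psi(\gamma_i(kx_i))=\Psi\big((b+\gamma_i(kx_i))-b\big)\in\Psi(E-b)$ (the value $x_i=0$, if present in $F$, imposes no condition). So the plan is to invoke Furstenberg's correspondence principle: it produces a measure preserving $\bZ^N$-system $(X,\mu,(T^v)_{v\in\bZ^N})$ and a set $A\subseteq X$ with $\mu(A)=\bar d(E)=:\delta>0$ such that $\bar d^*\big(E\cap\bigcap_i(E-v_i)\big)\ge\mu\big(A\cap\bigcap_i T^{-v_i}A\big)$ for every tuple $(v_i)$; it then suffices to find an integer $k=k(E,m)$ such that for each $m$-element set $F=\{x_1,\dots,x_m\}\subseteq\bZ^N$ there are $\gamma_i\in\Gamma$ with $\mu\big(A\cap\bigcap_i T^{-\gamma_i(kx_i)}A\big)>0$. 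To locate such $\gamma_i$ I would average over a random walk: fix a symmetric, finitely supported probability measure $\nu$ on $\Gamma$ whose support generates $\Gamma$, set $g^{(n)}_w:=\sum_{\gamma}\nu^{*n}(\gamma)\,\mathbf 1_A\circ T^{\gamma w}\in[0,1]$ (with $\int g^{(n)}_w\,d\mu=\delta$), and note that, since the expectation of a product of indicators over independent variables factorises pointwise on $X$,
\[
I_n(F,k)\;:=\;\mathbb{E}_{\gamma_1,\dots,\gamma_m\sim\nu^{*n}}\Big[\mu\big(A\cap\textstyle\bigcap_i T^{-\gamma_i(kx_i)}A\big)\Big]\;=\;\int_X\mathbf 1_A\prod_{i=1}^m g^{(n)}_{kx_i}\,d\mu .
\]
It is then enough to choose $k$ with $\liminf_n I_n(F,k)>0$ uniformly in $F$ (passing to Cesàro averages of $\nu^{*n}$ if the qualitative Benoist--Quint statement used below is only available in that form).

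\smallskip
\noindent\emph{A finite factor fixes $k$.} The obstruction to $g^{(n)}_{kx}\to\delta$ is the torsion part of the spectrum of $g:=\mathbf 1_A-\delta$: its spectral measure $\sigma_g$ on $\widehat{\bZ^N}=\bT^N$ is finite, vanishes at $0$, but may carry atoms at torsion characters of arbitrarily large order, so no single $k$ can annihilate all of it. The key idea is to absorb all but an arbitrarily small fraction of this torsion mass into one finite rotation factor. Given a small $\eta>0$ (to be fixed in terms of $\delta$ and $m$), choose $q=q(E,\eta)$ so that the $\sigma_g$-mass of torsion characters whose order does not divide $q$ is $<\eta$; let $\cI_q$ be the $\sigma$-algebra of $q\bZ^N$-invariant sets, put $\mathbf 1_A^{(q)}:=\mathbb{E}[\mathbf 1_A\mid\cI_q]\in[0,1]$, and \emph{set $k:=q$}. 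For $w=kx$ with $x\neq0$: since $q\mid k$ and $\Gamma\subseteq\GL_N(\bZ)$ one has $\gamma w\in q\bZ^N$, and $\mathbf 1_A^{(q)}$ is $q\bZ^N$-invariant, so the $\cI_q$-component is rigid under all these shifts: $g^{(n)}_w=\mathbf 1_A^{(q)}+h^{(n)}_w$, where $h^{(n)}_w:=\sum_\gamma\nu^{*n}(\gamma)\,g'\circ T^{\gamma w}$ and $g':=\mathbf 1_A-\mathbf 1_A^{(q)}$. Because $\mathbf 1_A^{(q)}$ carries precisely the spectral mass of $\mathbf 1_A$ at characters of order dividing $q$, the measure $\sigma_{g'}$ assigns mass $<\eta$ to the (finite-order) torsion characters, and the spectral theorem together with the identity $\gamma w\cdot\theta=w\cdot\gamma^{t}\theta$ gives
\[
\bigl\|h^{(n)}_w\bigr\|_2^2\;=\;\int_{\bT^N}\bigl|\mathbb{E}_{\gamma\sim\nu^{*n}}\,e(w\cdot\gamma^{t}\theta)\bigr|^2\,d\sigma_{g'}(\theta).
\]
The inner expectation is a Fourier coefficient of the $n$-step law of the random walk on $\bT^N$ driven by the push-forward of $\nu$ under $\gamma\mapsto\gamma^{t}$; transposition preserves all the hypotheses (Zariski-connected semisimple Zariski closure, no compact factors, strong irreducibility), so by the Benoist--Quint equidistribution theorem (and its effective form of Bourgain--Furman--Lindenstrauss--Mozes) this law converges weak-$*$ to Haar measure on $\bT^N$ for every non-torsion $\theta$, whence the inner expectation tends to $0$ there (as $w\neq0$). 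Dominated convergence then yields $\limsup_n\|h^{(n)}_w\|_2^2\le\eta$.

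\smallskip
\noindent\emph{Conclusion in the general case.} Expanding $\prod_i g^{(n)}_{kx_i}=\prod_i(\mathbf 1_A^{(q)}+h^{(n)}_{kx_i})$ and using $0\le\mathbf 1_A^{(q)}\le1$, $|h^{(n)}_w|\le2$, the Cauchy--Schwarz inequality and the previous bound, one gets $\limsup_n\big\|\prod_i g^{(n)}_{kx_i}-(\mathbf 1_A^{(q)})^{m}\big\|_1\le C_m\sqrt\eta$ for a constant $C_m$ depending only on $m$. Hence
\[
\liminf_{n\to\infty}I_n(F,k)\;\ge\;\int_X(\mathbf 1_A^{(q)})^{m+1}\,d\mu-C_m\sqrt\eta\;\ge\;\delta^{m+1}-C_m\sqrt\eta,
\]
the last inequality being Jensen's inequality for $t\mapsto t^{m+1}$ applied to $\mathbf 1_A^{(q)}$ (which has integral $\delta$). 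Choosing $\eta$ with $C_m\sqrt\eta\le\tfrac12\delta^{m+1}$ forces $\liminf_n I_n(F,k)\ge\tfrac12\delta^{m+1}>0$, and this bound is independent of $F$ since $\sigma_g$ depends only on $E$. Thus for each $F$ some $I_n(F,k)$ is positive, which supplies the $\gamma_i$, and the correspondence principle supplies $b$; this proves the first assertion with $k=q(E,m)$.

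\smallskip
\noindent\emph{Bohr sets, and the main obstacle.} If $E$ is an aperiodic Bohr set, then by the analysis of Section~\ref{Bohr} its correspondence system may be taken to be an ergodic torus rotation all of whose nonzero eigenvalue-characters are irrational, i.e.\ $\sigma_g$ has no torsion atoms; running the above with $\eta=0$ and $q=1$ gives $k=1$. Finally, for an aperiodic $\mathrm{Bohr}_o$-set $E=\{n:\alpha(n)\in U\}$, with $\alpha:\bZ^N\to\bT^d$ a homomorphism, $U$ an open neighbourhood of $0$, and $\eta_\chi:=\chi\circ\alpha\in\widehat{\bZ^N}=\bT^N$ non-torsion for every $\chi\neq0$, and any $z\neq0$: the law of $\alpha(\gamma z)$ under $\gamma\sim\nu^{*n}$ has Fourier coefficients $\mathbb{E}_\gamma\,e(z\cdot\gamma^{t}\eta_\chi)$, which tend to $0$ for $\chi\neq0$ by Benoist--Quint, so this law converges weak-$*$ to Haar measure on $\bT^d$; as $U$ is open, this produces $\gamma\in\Gamma$ with $\gamma z\in E$, hence $\Psi(z)=\Psi(\gamma z)\in\Psi(E)$, and since $0\in E$ we conclude $\Psi(E)=\Psi(\bZ^N)$. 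The step I expect to be the crux is the second one: turning the fact that \emph{no} fixed $k$ makes $g^{(n)}_{kx}$ constant into the statement that the \emph{particular} choice $k=q$ makes the $\cI_q$-component rigid while Benoist--Quint annihilates the essentially irrational remainder — together with the observation that $q$, being read off the fixed measure $\sigma_g$ attached to $E$, is automatically independent of $F$.
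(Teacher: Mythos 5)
Your proposal is correct and takes essentially the same route as the paper: Furstenberg correspondence, averaging $\mathbf 1_A\circ T^{\gamma w}$ over a random walk on $\Gamma$, Benoist--Quint equidistribution on $\bT^N$ to annihilate the non-rational part of the spectrum, and a finite-order reduction (your conditional expectation $\mathbf 1_A^{(q)}=\mathbb E[\mathbf 1_A\mid\cI_q]$, which matches the paper's choice of a finite set of rational characters in Lemma~\ref{approx}) to pin down $k$. The only difference is organizational — the paper isolates the Hilbert-space estimate as a standalone mean ergodic theorem (Theorem~\ref{mainerg}, with Lemmas~\ref{rational} and~\ref{irrational} playing the role of your $\|h^{(n)}_w\|_2^2\le\eta$ bound) and then applies it to $L^2(X,\nu)$, whereas you run the same computation directly in the dynamical system; the Ces\`aro caveat for $\nu^{*n}$ you already flag, and it is indeed needed since Theorem~\ref{BQ} is a Ces\`aro statement.
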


The following result is an immediate consequence of Theorem \ref{general}, and generalizes the main result in 
\cite{BjFish}.

\begin{corollary}
Let $\Gamma$ and $\Psi$ be as in Theorem \ref{general} and suppose that $E \subset \bZ^N$ has positive upper Banach density. Then there exists a positive integer $k$ such that
\[
\Psi(k \, \bZ^N) \subset \Psi(E-E).
\]
\end{corollary}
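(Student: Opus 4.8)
The plan is to apply Theorem~\ref{general} in the simplest possible case, namely with $m = 1$, and then to observe that the dependence of the translate $b$ on the set $F$ becomes harmless as soon as one passes to the difference set $E - E$. Concretely, since $E$ has positive upper Banach density and $m = 1$ is a positive integer, Theorem~\ref{general} produces a positive integer $k$ with the property that for \emph{every} singleton $F = \{x\} \subset \bZ^N$ there exists some $b \in E$ (depending a priori on $x$) with $\{\Psi(kx)\} = \Psi(kF) \subset \Psi(E - b)$.

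Next I would note that for any $b \in E$ one has $E - b \subseteq E - E$, and therefore $\Psi(E - b) \subseteq \Psi(E - E)$. Combining this with the inclusion from the previous paragraph yields $\Psi(kx) \in \Psi(E - E)$ for every $x \in \bZ^N$, which is precisely the asserted inclusion $\Psi(k\,\bZ^N) \subset \Psi(E - E)$.

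There is no real obstacle here beyond the invocation of Theorem~\ref{general}; the single point worth emphasizing is that the integer $k$ furnished by that theorem for $m = 1$ is \emph{uniform} over all singletons $\{x\}$, so that one fixed $k$ works for the entire lattice even though the witnessing element $b \in E$ is permitted to vary with $x$. This uniformity is exactly the feature distinguishing the twisted statement from the classical Furstenberg--Katznelson setting, in which $k$ is forced to depend on $F$. (If, in addition, $E$ is an aperiodic $\textrm{Bohr}_o$-set, then the last assertion of Theorem~\ref{general} already gives $\Psi(\bZ^N) = \Psi(E) \subseteq \Psi(E-E)$, so the corollary holds trivially with $k = 1$.)
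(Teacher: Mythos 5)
Your proof is correct and is exactly the argument the authors have in mind when they call the corollary an ``immediate consequence'' of Theorem~\ref{general}: apply it with $m = 1$ to get a uniform $k$, then use the trivial inclusion $E - b \subseteq E - E$ for $b \in E$. Nothing to add.
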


\subsection{Twisted multiple recurrence}
\label{twisted}
Theorem \ref{general} is derived from a "twisted" multiple recurrence result for ergodic $\bZ^N$-actions which 
we shall now state. Let $(X,\nu)$ be a Borel probability measure space, i.e. $X$ is a Borel subset of a compact and 
second countable space $\overline{X}$, and $\nu$ is a probability measure on the restriction of the Borel 
$\sigma$-algebra on $\overline{X}$ to $X$. Suppose that $\bZ^N$ acts on $X$ by Borel measurable bijections,
which preserve $\nu$. In this case we refer to $(X,\nu)$ as a \emph{$\bZ^N$-space}. We say that $(X,\nu)$
is \emph{ergodic} if whenever $B \subset X$ is a Borel set which is invariant under $\bZ^N$, then $B$ is either
a $\nu$-null set or a $\nu$-conull set.

We note that one can always associate to any $\bZ^N$-space a unitary representation $\pi_X$ of $\bZ^N$ on the 
Hilbert space $L^2(X,\nu)$ via 
\[
\big(\pi_X(a)f\big)(x) = f((-a) \cdot x), \quad \textrm{for $a \in \bZ^N$ and $f \in L^2(X,\nu)$}.
\]
Given a character $\chi$ on $\bZ^N$, we write 
\[
L^2(X,\nu)_\chi = \big\{ f \in L^2(X,\nu) \, : \,  \pi_X(a)f = \chi(a) f \big\} \subset L^2(X,\nu).
\]
We say that $\chi$ is a \emph{rational character} if there exists a positive integer $m$ such that $\chi(ma) = 1$ for all 
$a \in \bZ^N$. The set of all rational $\chi$ for which $L^2(X,\nu)_\chi$ is non-zero is called the 
\emph{rational spectrum} of the $\bZ^N$-space $(X,\nu)$. Since the constant function $1$ is fixed by 
$\pi_X$, we note that the rational spectrum always contains the trivial character $1$. If there are no other
elements in the rational spectrum, we say that the rational spectrum is \emph{trivial}.

\begin{theorem}
\label{main}
Let $(X,\nu)$ be an ergodic $\bZ^N$-space and suppose that $B$ is a Borel 
set in $X$. Let $\Gamma$ be as in Theorem \ref{general}. For every $\eps > 0$ and integer $m \geq 1$, there exists a positive integer $k$ with the property that whenever $a_1,\ldots,a_m$ are elements in $k \, \bZ^N$, then then there are $\gamma_1,\ldots, \gamma_m \in \Gamma$ such that 
\[
\nu\Big( \bigcap_{j=1}^m \big(\gamma_j a_j\big) \cdot B \Big) \geq \nu(B)^m - \eps.
\]
If the rational spectrum of the $\bZ^N$-space $(X,\nu)$ is trivial, then $k$ can be chosen to be $1$.
\end{theorem}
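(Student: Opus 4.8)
The plan is to reduce the twisted multiple recurrence statement to an equidistribution statement for the $\Gamma$-action on a suitable factor, and then to invoke the Benoist--Quint / Bourgain--Furman--Lindenstrauss--Mozes machinery. First I would pass to the Kronecker-type factor that records the rational spectrum: let $\nu = \int_Z \nu_z \, d\eta(z)$ be the disintegration of $\nu$ over the maximal factor $(Z,\eta)$ on which $\bZ^N$ acts through a finite quotient, i.e.\ the factor generated by $\bigoplus_{\chi \text{ rational}} L^2(X,\nu)_\chi$. Since $\bZ^N$ acts on $Z$ through $\bZ^N / m\bZ^N$ for some $m$, the subgroup $m\bZ^N$ acts trivially on $Z$. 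The key point is that the $\Gamma$-twisting will be used to ``average out'' the behaviour along the $a_j$ relative to this factor, while the contribution of each individual $a_j$ modulo $m\bZ^N$ is killed by taking $k$ a multiple of $m$.

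The main step is to prove, for a single translate, the equidistribution statement: for $a \in k\bZ^N$ with $k$ chosen appropriately, the measures $\frac{1}{|S_n|}\sum_{\gamma \in S_n} (\gamma a)_* \nu$ (where $S_n$ is a ball in $\Gamma$, or rather one uses a random walk with a generating measure $\mu$ on $\Gamma$ whose support generates $\Gamma$) converge weakly to $\nu$; equivalently, for $f \in L^2(X,\nu)$,
\[
\frac{1}{|S_n|} \sum_{\gamma \in S_n} \pi_X(\gamma a) f \longrightarrow \Big( \int_X f \, d\nu \Big) \cdot 1 \quad \text{in } L^2,
\]
provided $f$ is orthogonal to the rational spectrum (when $a \in k\bZ^N$ with $m \mid k$, the rational-spectrum part is fixed by $\pi_X(a)$ hence one only needs to handle its projection to $Z$, which is where the hypothesis forces triviality or a density argument). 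To prove this, I would decompose $L^2(X,\nu)$ under the $\bZ^N$-action into its continuous spectrum and pure-point spectrum. For the pure-point part, $\pi_X(a)$ acts by a character $\chi(a)$, and $\gamma \mapsto \chi(\gamma a)$ is essentially an orbit of $\Gamma$ on the dual torus $\widehat{\bZ^N} = \bT^N$; here one invokes the Benoist--Quint theorem that $\Gamma$-orbits on $\bT^N$ equidistribute towards the Haar measure on a rational subtorus, and since $a$ is chosen with all coordinates divisible by a large $k$, the relevant orbit closure has no atoms away from $0$ except those forced to be rational of controlled denominator---these are precisely absorbed into the rational spectrum. For the continuous part one uses a standard spectral/van der Corput estimate: $\|\frac{1}{|S_n|}\sum \pi_X(\gamma a)f\|^2$ is controlled by averages of the spectral measure over $\Gamma$-translates of $a$, which go to zero by the continuity of the spectral measure together with the fact that $\Gamma$-orbits on $\bT^N$ are infinite (strong irreducibility).

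Granting the single-translate equidistribution, the multiple recurrence conclusion follows by a now-routine argument: write $\mathbf{1}_B = \nu(B) + g$ with $g \perp 1$, expand
\[
\nu\Big( \bigcap_{j=1}^m (\gamma_j a_j)\cdot B \Big) = \int_X \prod_{j=1}^m \big( \nu(B) + \pi_X(\gamma_j a_j) g \big) \, d\nu,
\]
and observe that after averaging over $\gamma_1,\ldots,\gamma_m$ independently (with the random walk measure), every term containing at least one factor $\pi_X(\gamma_j a_j) g$ tends to $0$ by the equidistribution applied in the $j$-th variable together with Cauchy--Schwarz to bound the remaining factors by $1$; hence the average of $\nu(\bigcap_j (\gamma_j a_j)\cdot B)$ over the $\gamma_j$ is at least $\nu(B)^m - \eps$ for $n$ large, so some choice of $\gamma_1,\ldots,\gamma_m$ works. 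I expect the main obstacle to be the careful bookkeeping of \emph{how large $k$ must be}: one needs $k$ chosen so that the Benoist--Quint equidistribution on $\bT^N$ is uniform enough over the (finitely many relevant) characters appearing in an $\eps$-approximation of the pure-point spectrum of $X$, and so that the residual rational characters of small denominator are trivialized by divisibility---reconciling the quantitative, non-effective nature of the equidistribution input with the need for a single $k$ valid for all $m$-tuples $a_1,\ldots,a_m \in k\bZ^N$ simultaneously is the delicate point, handled by first fixing a finite spectral approximation depending only on $\eps$ and $\nu(B)$.
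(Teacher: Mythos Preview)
Your overall architecture is exactly that of the paper: reduce to a mean ergodic theorem for the averages $Q_a^{(n)} f = \frac{1}{n}\sum_{j=1}^n \sum_{\gamma} \mu^{*j}(\gamma)\,\pi_X(\gamma a)f$, and analyse these via the spectral measure of $f$ together with the Benoist--Quint equidistribution of $\Gamma$-orbits on $\bT^N$. That part of your sketch is correct and essentially coincides with the paper's Theorem~\ref{mainerg} and Lemmas~\ref{rational}--\ref{irrational}.

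However, there is a genuine gap in your passage from single-translate equidistribution to multiple recurrence. You claim that for $a\in k\bZ^N$ the averages $Q_a^{(n)} f$ converge in $L^2$ to the constant $\int f\,d\nu$, and accordingly you decompose $\mathbf{1}_B=\nu(B)+g$ with $g\perp 1$ and assert that every cross term involving some $\pi_X(\gamma_j a_j)g$ averages to zero. This is false in general: the correct limit of $Q_a^{(n)} f$ is $P_{\textrm{rat}}f$, the projection onto $\cH_{\textrm{rat}}=\bigoplus_{\chi\ \textrm{rational}}\cH_\chi$, \emph{not} the projection onto constants. No choice of $k$ makes $\pi_X(a)$ act trivially on all of $\cH_{\textrm{rat}}$, since there may be infinitely many rational characters in the spectrum; Lemma~\ref{approx} only yields $\|Q_a v - P_{\textrm{rat}}v\|<\eps$ for $a\in k\bZ^N$, which is strictly weaker. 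Consequently $Q_a g$ tends to $P_{\textrm{rat}}g$, which is typically nonzero, and your expansion does not collapse to $\nu(B)^m$.

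The paper repairs this as follows. Set $f=\mathbf{1}_B$ and $f_{\textrm{rat}}=P_{\textrm{rat}}f$, and use that $P_{\textrm{rat}}$ is a conditional expectation, so $0\le f_{\textrm{rat}}\le 1$ and $\int f_{\textrm{rat}}\,d\nu=\int f\,d\nu$. Since $\|Q_{a_j}^{(n)}f\|_\infty\le 1$ and $\|Q_{a_j}^{(n)}f-f_{\textrm{rat}}\|_2<\eps/m$ for $a_j\in k\bZ^N$, a telescoping estimate gives
\[
\int_X \prod_{j=1}^m Q_{a_j}^{(n)}f\,d\nu \;\ge\; \int_X f_{\textrm{rat}}^{\,m}\,d\nu \;-\;\eps,
\]
and then Jensen (or H\"older) yields $\int f_{\textrm{rat}}^{\,m}\,d\nu\ge\big(\int f_{\textrm{rat}}\,d\nu\big)^m=\nu(B)^m$. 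From there one extracts the desired $\gamma_1,\ldots,\gamma_m$ exactly as you suggest. In short: replace your decomposition $f=\nu(B)+g$ by $f=f_{\textrm{rat}}+(f-f_{\textrm{rat}})$, and insert the H\"older step; the rest of your outline then goes through.
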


In Appendix I we outline how the following result can be deduced from Theorem \ref{main}. For 
the connection between trivial rational spectrum and aperiodic Bohr sets we refer the reader 
to Section \ref{Bohr}.

\begin{corollary}
\label{maincor}
Let $E \subset \bZ^N$ be a set of positive upper Banach density and $m \geq 1$. Let $\Gamma$ be as in 
Theorem \ref{general}. For every $\eps > 0$,
there exists a positive integer $k$ with the property that whenever $a_1,\ldots,a_m$ are elements in $k \, \bZ^N$,
then there are $\gamma_1,\ldots, \gamma_m \in \Gamma$ such that
\[
d^*\Big( \bigcap_{j=1}^m \big(E - \gamma_j a_j \big) \Big) \geq d^*(E)^m - \eps.
\]
If $E$ is an aperiodic Bohr set, then $k$ can be chosen to be $1$.
\end{corollary}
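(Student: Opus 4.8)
The plan is to derive Corollary \ref{maincor} from Theorem \ref{main} via the standard correspondence principle that attaches to a set $E \subset \bZ^N$ of positive upper Banach density an ergodic $\bZ^N$-space. First I would invoke the Furstenberg correspondence principle (to be recalled in Appendix I): there is an ergodic $\bZ^N$-space $(X,\nu)$ and a Borel set $B \subset X$ with $\nu(B) = d^*(E)$ such that, for every finite collection $c_1,\dots,c_m \in \bZ^N$,
\[
d^*\Big( \bigcap_{j=1}^m (E - c_j) \Big) \geq \nu\Big( \bigcap_{j=1}^m c_j \cdot B \Big).
\]
Some care is needed here: the classical correspondence principle produces a $\bZ^N$-system that need not be ergodic, so one passes to an ergodic component on which $B$ still has measure $d^*(E)$; because the defining property is an inequality ($\geq$), restricting to a suitable ergodic component only helps, and ergodicity is exactly what Theorem \ref{main} requires. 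With this in hand, the deduction is immediate: given $\eps > 0$ and $m$, let $k$ be the integer furnished by Theorem \ref{main} for the ergodic $\bZ^N$-space $(X,\nu)$, the Borel set $B$, and this $\eps$. Then for any $a_1,\dots,a_m \in k\,\bZ^N$, Theorem \ref{main} yields $\gamma_1,\dots,\gamma_m \in \Gamma$ with $\nu\big( \bigcap_{j=1}^m (\gamma_j a_j) \cdot B \big) \geq \nu(B)^m - \eps = d^*(E)^m - \eps$, and applying the correspondence inequality with $c_j = \gamma_j a_j$ gives $d^*\big( \bigcap_{j=1}^m (E - \gamma_j a_j) \big) \geq d^*(E)^m - \eps$, as desired.

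For the final assertion, when $E$ is an aperiodic Bohr set I would argue that the associated $\bZ^N$-space $(X,\nu)$ has trivial rational spectrum, so that the improved conclusion $k = 1$ of Theorem \ref{main} applies directly. This requires unpacking the relationship between aperiodicity of the Bohr set and the absence of nontrivial rational characters in the spectrum of $(X,\nu)$ — precisely the material promised for Section \ref{Bohr}. The point is that a rational character $\chi$ in the spectrum of the system corresponds, via the correspondence principle, to a periodicity obstruction in $E$ (a finite-index sublattice of $\bZ^N$ along which $E$ is, in a density sense, a union of cosets), and aperiodicity rules this out. One subtlety worth flagging: the correspondence principle can be set up so that the system it produces is (a factor of) the orbit closure of $E$ in $\{0,1\}^{\bZ^N}$ under the shift, and for Bohr sets this orbit closure is a well-understood object (an affine skew-product over a compact abelian group), making the spectral computation transparent; one should make sure the ergodic component chosen above inherits triviality of the rational spectrum, which again follows since passing to an ergodic component cannot introduce new rational eigenvalues beyond those already present.

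The main obstacle I anticipate is not in the logical skeleton — which is a routine application of the correspondence principle — but in the bookkeeping needed to guarantee \emph{simultaneously} that the $\bZ^N$-space is ergodic, that $\nu(B) = d^*(E)$ exactly (so that the bound $d^*(E)^m - \eps$ is the sharp one rather than something weaker), and that in the Bohr case the rational spectrum is genuinely trivial. Classical versions of the correspondence principle typically secure the first two but via an averaging/ergodic-decomposition step that can blur the third, so the careful statement to be proved in Appendix I must thread all three requirements at once. A clean way to do this is to build $(X,\nu)$ directly as the orbit closure of $\mathbf{1}_E$ equipped with a shift-invariant measure of maximal $B$-mass obtained as a weak-$*$ limit along a Følner sequence realizing $d^*(E)$, then pass to an ergodic component maximizing $\nu(B)$; ergodic decomposition guarantees such a component exists with $\nu(B) \geq d^*(E)$, and the reverse inequality (hence equality) follows because the shift-system itself cannot exceed $d^*(E)$ on $B$. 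The Bohr-set refinement is then handled by the explicit structure of the orbit closure. Everything else is a substitution into the statement of Theorem \ref{main}.
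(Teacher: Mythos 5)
Your proof has the right logical skeleton and correctly identifies both the main reduction (Furstenberg correspondence $\to$ Theorem~\ref{main}) and the potential pitfalls (simultaneously securing ergodicity, the exact equality $\nu(B)=d^*(E)$, and, in the Bohr case, trivial rational spectrum). The general-density case matches the paper's Appendix~I essentially line for line: the paper realizes $E$ as the hitting-time set of the clopen cylinder $V$ in the orbit closure of $E$ in $2^{\bZ^N}$, and Proposition~\ref{furst} supplies in one step an \emph{ergodic} $\nu$ with $\nu(V)=d^*(E)$, after which Theorem~\ref{main} gives the desired $k$ and the $\gamma_j$'s.

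Where your route genuinely diverges from the paper's is the Bohr case, and the paper's choice here is cleaner in a way worth noting. You propose to stick with the orbit closure of $E$ in $2^{\bZ^N}$, argue it has the structure of a skew product over a compact abelian group, and then verify that an appropriate ergodic component inherits trivial rational spectrum; you correctly observe that passing to an ergodic component cannot create new rational eigenvalues, so the work is to show the orbit-closure system itself has none. The paper avoids this analysis entirely: for an aperiodic Bohr set $E=\tau^{-1}(U)$, it simply takes the $\bZ^N$-space to be $(K,m_K)$ with the translation action $a\cdot x = x - \tau(a)$. Because $\tau$ has dense image, $m_K$ is the \emph{unique} invariant measure, hence automatically ergodic, and Proposition~\ref{furst} gives $d^*(E)=m_K(U)$ without any maximizing-over-components step. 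Trivial rational spectrum is then the content of Lemma~\ref{bohrirrational}, proved directly from connectedness of $K$. So the ``bookkeeping'' you flag as the main obstacle is resolved in the paper not by careful navigation of the ergodic decomposition of the orbit closure, but by swapping in a $\bZ^N$-space for which all three requirements hold on the nose. Both approaches should work, but the paper's is shorter and makes the Bohr refinement transparent rather than deferred to a structure theorem for orbit closures.
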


\subsection{Proof of Theorem \ref{general} using Corollary \ref{maincor}}
Let $Y$ be a set and $\Psi : \bZ^N \ra Y$ be a $\Gamma$-invariant function. Let $E \subset \bZ^N$ be a 
set of positive upper Banach density and $\eps > 0$ and let $m$ be a positive integer. By Corollary \ref{maincor} we can now find a positive integer $k$ with the property that for all $a_1,\ldots,a_m \in k \bZ^N$, there
are $\gamma_1,\ldots, \gamma_m \in \Gamma$ such that
\[
d^*\Big( E \cap \bigcap_{j=1}^m \big(E - \gamma_j a_j \big) \Big) \geq d^*(E)^{m+1} - \eps, 
\]
If $\eps < d^*(E)^{m+1}$, then the left hand side is positive, and we can find $b \in E$ such that
\[
b + \gamma_j a_j \in E, \quad \textrm{for every $j = 1,\ldots, m$}.
\]
In particular, $\Psi(a_j) = \Psi(\gamma_j a_j) \in \Psi(E-b)$ for each $j$. Since $a_1,\ldots, a_m \in k\bZ^N$ are arbitrary, this finishes the first part of the proof. Finally, by the second part of Corollary \ref{maincor}, if $E \subset \bZ^N$ is an aperiodic $\textrm{Bohr}_o$-set, then the integer $k$ above can be chosen to be $1$.

\subsection{A non-conventional mean ergodic theorem}

The proof of Theorem \ref{main} will use as a black box some recent deep results by Benoist and Quint 
from the papers \cite{BQ1} and \cite{BQ3}. The following definition will be useful.

\begin{definition}[BQ-pair]
Let $\Gamma < \GL_N(\bZ)$ be a non-trivial finitely generated 
irreducible subgroup and let $\mu$ be a finitely supported probability 
measure on $\Gamma$ whose support generates $\Gamma$ as a
semigroup. We say that $(\Gamma,\mu)$ is a \emph{BQ-pair} if the 
Zariski closure of $\Gamma$ is a Zariski-connected semisimple algebraic 
group with no compact factors. 
\end{definition}

Let $(\cH,\pi)$ be a unitary $\bZ^N$-representation on a separable Hilbert space $\cH$. Given a character
$\chi$ on $\bZ^N$, we define
\[
\cH_\chi = \big\{ v \in \cH \, : \, \pi(a)v = \chi(a)v, \enskip \textrm{for all $a \in \bZ^N$} \big\}.
\]
The \emph{rational spectrum} of $(\cH,\pi)$ is defined as the set of all rational characters on $\bZ^N$ for which
$\cH_\chi$ is non-zero. We say that the rational spectrum is \emph{trivial} if it is either empty or only consists of
the character $1$. Finally, we denote by $\cH_\textrm{rat}$ the linear span of $\cH_\chi$, as $\chi$ ranges over
the rational spectrum, and we write $\cH^G$ for the linear subspace of $\pi(G)$-invariant vectors in $\cH$.

Suppose that $\mu$ is a probability measure on $\Gamma$. We define
\[
\mu^{*j}(\gamma) = \sum \mu(\gamma_1) \cdots \mu(\gamma_j), \quad \textrm{for $j \geq 1$},
\]
where the sum is taken over all $j$-tuples $(\gamma_1,\ldots,\gamma_j)$ such that $\gamma_1 \cdots \gamma_j = 
\gamma$. \\

Our main technical result in this paper can now be stated as follows.

\begin{theorem}
\label{mainerg}
Let $(\Gamma,\mu)$ be a BQ-pair and let $(\cH,\pi)$ be a unitary $\bZ^N$-representation. For every $a \in \bZ^N$
and $v \in \cH$, the limit
\[
Q_a v := \lim_n \frac{1}{n} \sum_{j=1}^n \Big( \sum_{\gamma \in \Gamma} \mu^{*j}(\gamma) \pi(\gamma a) v \Big),
\]
exists in the norm topology on $\cH$. Furthermore, for every $\eps > 0$ and $v \in \cH$, there exists a positive integer $k$ with the property that whenever $a \in k \bZ^N$, then  
\[
\big\| Q_a v - P_{\textrm{rat}}v \big\| < \eps,
\]
where $P_{\textrm{rat}}$ denotes the orthogonal projection onto $\cH_{\textrm{rat}}$. If the rational spectrum of 
$(\cH,\pi)$ is trivial, then $Q_a$ coincides with the orthogonal projection onto the space of $\pi$-invariant vectors, for all $a \in \bZ^N \setminus \{0\}$.
\end{theorem}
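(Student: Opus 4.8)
The plan is to decompose the $\bZ^N$-representation $(\cH,\pi)$ according to the rational spectrum, and then to treat the ``rational part'' and its orthogonal complement separately. Since $\bZ^N$ is abelian, $\pi$ admits a projection-valued spectral measure $E(\cdot)$ on the dual torus $\widehat{\bZ^N} = \bT^N$, and $\pi(a) = \int_{\bT^N} \chi(a)\, dE(\chi)$. The rational characters form the countable subgroup $(\bQ/\bZ)^N \subset \bT^N$, and $\cH_{\textrm{rat}} = \bigoplus_\chi \cH_\chi$ is exactly the range of $E$ restricted to this countable set of atoms; write $\cH = \cH_{\textrm{rat}} \oplus \cH_{\textrm{rat}}^\perp$. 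The operators $\sum_\gamma \mu^{*j}(\gamma)\pi(\gamma a)$ commute with $\pi(\bZ^N)$ (since $\Gamma$ normalizes $\bZ^N$ --- indeed $\gamma a \in \bZ^N$ and addition is commutative), hence preserve this decomposition, so it suffices to analyze $Q_a$ on each piece.

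On $\cH_{\textrm{rat}}$, I would further reduce to a single isotypic component $\cH_\chi$ with $\chi$ rational, say of period $m_\chi$ so that $\chi(m_\chi b) = 1$ for all $b \in \bZ^N$. For $v \in \cH_\chi$ we have $\pi(\gamma a) v = \chi(\gamma a) v$, so the Cesàro-in-$j$ average of the random-walk operators acts on $v$ by the scalar $\lim_n \frac1n \sum_{j=1}^n \sum_\gamma \mu^{*j}(\gamma)\chi(\gamma a)$. The map $\gamma \mapsto \chi(\gamma a)$ is a function on $\Gamma$ valued in $m_\chi$-th roots of unity; this is where the Benoist--Quint machinery enters. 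When $a \in m_\chi \bZ^N$ we have $\chi(\gamma a) = \chi(m_\chi \gamma a') = 1$, so the scalar is $1$ and $Q_a v = v = P_{\textrm{rat}} v$. For general $a$, existence of the limit follows from the mean ergodic / equidistribution theorem for the $\mu$-walk on $\Gamma$ acting through the finite quotient $\GL_N(\bZ/m_\chi\bZ)$ (this is a random walk on a finite group, so Cesàro convergence is elementary); the key point for the ``$k$ depending on $\eps$'' statement is that for $v$ a \emph{finite} sum of pieces $v_\chi$ from finitely many rational characters $\chi_1,\dots,\chi_r$, choosing $k$ divisible by $\operatorname{lcm}(m_{\chi_1},\dots,m_{\chi_r})$ forces $Q_a v = P_{\textrm{rat}} v$ exactly; a general $v$ is approximated in norm by such finite sums, which yields the $\eps$-statement after a routine estimate using $\|Q_a\| \le 1$.

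On $\cH_{\textrm{rat}}^\perp$, the goal is to show $Q_a = 0$ for every $a \ne 0$ (and, in the trivial-rational-spectrum case where $\cH^G$ plays the role of $\cH_{\textrm{rat}}$, that $Q_a$ is the projection onto $\cH^G$ --- note that on $\cH^G$, $\pi$ is trivial so $Q_a$ acts as the identity there, consistent with $1$ being the only rational character). This is the heart of the argument and the main obstacle. The operator inside the Cesàro average is $T_j := \sum_\gamma \mu^{*j}(\gamma)\pi(\gamma a)$; spectrally, on the component $E(\cdot)$ over a character $\chi \in \bT^N \setminus (\bQ/\bZ)^N$, one has $T_j$ acting as multiplication by $\sum_\gamma \mu^{*j}(\gamma)\chi(\gamma a)$, and $\chi(\cdot\, a) = \chi_a$ is an \emph{irrational} point of $\bT^N$ (more precisely, $\chi_a$ has infinite order, since $\chi$ is non-rational and $a \ne 0$; strong irreducibility of $\Gamma$ prevents $\chi_a$ from being supported on a proper rational subtorus on a positive-measure set). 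The Benoist--Quint equidistribution theorem for random walks on the torus $\bT^N$ under the $\Gamma$-action --- applied to the starting point $\chi_a$, whose $\Gamma$-orbit closure is all of $\bT^N$ by strong irreducibility and no-compact-factors --- gives that $\frac1n\sum_{j=1}^n \mu^{*j} * \delta_{\chi_a}$ converges weak-$*$ to Lebesgue measure on $\bT^N$, hence $\frac1n\sum_{j=1}^n \sum_\gamma \mu^{*j}(\gamma)\, \psi(\gamma \chi_a) \to \int_{\bT^N}\psi\, d\operatorname{Leb}$ for continuous $\psi$; taking $\psi$ to be the coordinate function and testing against vectors supported spectrally away from rational points, the limit is $\int_{\bT^N} \xi_1\, d\operatorname{Leb}(\xi) = 0$. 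Making this rigorous requires (i) a disintegration of $v \in \cH_{\textrm{rat}}^\perp$ over the spectral measure and an application of the scalar equidistribution fibrewise together with dominated convergence (using $|\sum_\gamma \mu^{*j}(\gamma)\chi(\gamma a)| \le 1$), and (ii) care that the exceptional $\chi$ for which the orbit closure degenerates form an $E$-null set --- this is precisely guaranteed by strong irreducibility, since a positive-measure set of such $\chi$ would produce a $\Gamma'$-invariant proper subspace for a finite-index $\Gamma' \le \Gamma$. I expect assembling these two ingredients --- the spectral disintegration plus the precise form of the Benoist--Quint theorem on $\bT^N$ with the right hypotheses --- to be the technically delicate core; everything else is bookkeeping with $\|Q_a\|\le1$ and density of finite-rank rational vectors.
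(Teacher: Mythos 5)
Your decomposition and line of attack coincide with the paper's: split $\cH = \cH_{\textrm{rat}} \oplus \cH_{\textrm{rat}}^\perp$, handle the rational part via the finite-orbit case of Theorem \ref{BQ} plus a finite-approximation argument with $\|Q_a\|\le 1$ (the paper's Lemmas \ref{rational} and \ref{approx}), and obtain $Q_a=0$ on $\cH_{\textrm{rat}}^\perp$ by combining a spectral-measure representation, dominated convergence, and Benoist--Quint equidistribution (Lemma \ref{irrational}). So the approach is essentially the same.

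One step in your outline is misattributed and would not survive as written. You justify the claim that the exceptional characters are spectrally null by appealing to strong irreducibility, ``since a positive-measure set of such $\chi$ would produce a $\Gamma'$-invariant proper subspace.'' That is not how the argument runs. Strong irreducibility is used in Lemma \ref{ratfiniteindex} to identify the exceptional set: $[\Gamma:\Gamma_\chi]<\infty$ if and only if $\chi$ is rational, so the exceptional characters are exactly the countable set of rational ones --- but this by itself does not make them null for the spectral measure of an arbitrary vector (they can and do carry mass for vectors in $\cH_{\textrm{rat}}$). The reason the spectral measure of $v\in\cH_{\textrm{rat}}^\perp$ has no atoms at rational $\chi$ is simply that an atom at such a $\chi$ would give $v$ a nonzero component in $\cH_\chi$, contradicting $v\perp\cH_\chi$; the paper makes this precise by applying von Neumann's mean ergodic theorem to the twisted representation $\pi_\chi(a):=\chi(a)^{-1}\pi(a)$, which has nothing to do with finite-index subgroups preserving a proper subspace. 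Relatedly, the parenthetical claim that ``$\chi_a$ has infinite order since $\chi$ is non-rational and $a\neq 0$'' is both unclear and false in general (take $\chi=(\alpha,1)$ with $\alpha$ irrational and $a=e_2$), and is in any case unnecessary: once $\chi$ is irrational so that $[\Gamma:\Gamma_\chi]=\infty$, Theorem \ref{BQ} applies directly with starting point $\chi$ and any nonzero $a$, and the limit is zero because the $a$-th Fourier coefficient of Lebesgue measure on $\bT^N$ vanishes for $a\neq 0$.
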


\subsection{Proof of Theorem \ref{main} using Theorem \ref{mainerg}}
Let $(X,\nu)$ be an ergodic $\bZ^N$-space and let $(L^2(X,\nu,\pi_X)$ be the
associated $\bZ^N$-representation as in Subsection \ref{twisted}. Since $(X,\nu)$ is ergodic, we see that all $\pi_X$-invariant elements are ($\nu$-essentially) constant functions. Let $f \in L^2(X\nu)$ be a measurable function such that $0 \leq f \leq 1$, and define
\[
\big(Q^{(n)}_a f\big)(x) 
= 
\frac{1}{n} \sum_{j=1}^n 
\Big( 
\sum_{\gamma \in \Gamma} \mu^{*j}(\gamma) \big(\pi_X(\gamma a)f\big)(x)\Big), \quad \textrm{for $a \in \bZ^N$}.
\]
Let $f_{\textrm{rat}} = P_{\textrm{rat}}f$ and fix $m \in \bN$ and $\eps > 0$. By Theorem \ref{main} and the fact that $P_{\textrm{rat}}$ can be expressed as a conditional expectation (see $\S 7.4$ in \cite{EinsWard}), we know that:
\begin{itemize}
\item There exists a positive integer $k$ such that for all $a \in k\bZ^N \setminus \{0 \}$, and sufficiently large $n$, we have
\[
\|Q_{a}^{(n)}f\|_\infty \leq 1 \qand \|Q_{a}^{(n)} f - f_{\textrm{rat}}\| < \frac{\eps}{m}.
\]
\item We have
\[
0 \leq f_\textrm{rat} \leq 1 \qand \int_X f_\textrm{rat} \, d\nu = \int_X f \, d\nu.
\]
\item If the rational spectrum of $(X,\nu)$ is trivial, then $Q_a = P_\textrm{rat}$ and $Q_a f = \int_X f \, d\nu$ for
all non-zero $a \in \bZ^N$. In particular, the integer $k$ above can be chosen to be one. 
\end{itemize}
Now fix $a_1, \ldots, a_m \in k\bZ^n$. Hence, for some sufficiently large $n$, we have that
\[
\int_X Q^{(n)}_{a_1} f(x) \cdots Q^{(n)}_{a_m} f(x) \, d\nu(x) \geq \int_X f_{\textrm{rat}}^{m-s} f^s \, d\nu - \eps,
\]
where $s$ denotes the number of $a_i$'s equal to zero (note that $Q_0 f=f$). 
 If $s > 0$ then, since $f^{m-s}_{\textrm{rat}} \in \cH_{\textrm{rat}}$, we have that $$\int_X f_{\textrm{rat}}^{m-s} f^s \, d\nu = \int_X f_{\textrm{rat}}^{m-s} f \, d\nu   = \int_X f_{\textrm{rat}}^{m-s+1} \, d\nu \geq \int_X f_{\textrm{rat}}^m \, d\nu. $$ Hence in either case we have that $$ \int_X Q^{(n)}_{a_1} f(x) \cdots Q^{(n)}_{a_m} f(x) \, d\nu(x) \geq \int_X f_{\textrm{rat}}^m \, d\nu - \eps \geq \left( \int_X f_{\textrm{rat}} \, d\nu \right)^m - \eps,$$ where in the last step we used H\"older's inequality. In particular, for all $a_1,\ldots,a_m \in k \bZ^N$, we can find $\gamma_1,\ldots,\gamma_m \in \Gamma$ such that
\[
\int_X f((-\gamma_1 a_1) \cdot x) \cdots f((-\gamma_m a_m) \cdot x) \, d\nu(x) \geq \Big( \int_X f \, d\nu \Big)^{m} - \eps,
\] which gives Theorem \ref{main}. 

\section{Proof of Theorem \ref{mainerg}}

Let $\bT^N$ denote the set of all homomorphisms from $\bZ^N$ into 
$S^1 = \big\{ z \in \bC^* \, : \, |z| = 1 \big\}$, and note that $\GL_N(\bZ)$
acts on $\bT^N$ by
\[
\big(\gamma^*\chi\big)(a) = \chi(\gamma^{-1} a), \quad \textrm{for $\chi \in \bT^N$ and $\gamma \in \GL_N(\bZ)$}.
\]
Given $\chi \in \bT^N$ and $\Gamma < \GL_N(\bZ)$, we define 
\[
\Gamma_{\chi} = \big\{ \gamma \in \Gamma \, : \, \gamma^* \chi = \chi \big\} < \Gamma.
\]
We recall that an element $\chi \in \bT^N$ is called \emph{rational} if there exists a positive integer $m$ such
that $\chi(ma) = 1$ for all $a \in \bZ^N$. 

\begin{lemma}
\label{ratfiniteindex}
Suppose that $\Gamma < \GL_N(\bZ)$ is infinite\footnote{This is satisfied for $\Gamma$ coming from a BQ pair: If $\Gamma$ is non-trivial and has Zariski connected Zariski closure, then it must be infinite.} and strongly irreducible and $\chi \in \bT^N$. Then the index 
$[\Gamma : \Gamma_{\chi}]$ is finite if and only if $\chi$ is rational.
\end{lemma}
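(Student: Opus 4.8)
The plan is to prove both directions of the equivalence. The easy direction is that rationality of $\chi$ implies $[\Gamma : \Gamma_\chi] < \infty$: if $\chi(ma) = 1$ for all $a \in \bZ^N$, then $\chi$ factors through the finite quotient $(\bZ/m\bZ)^N$, so the $\GL_N(\bZ)$-orbit of $\chi$ in $\bT^N$ is contained in the finite set of characters killing $m\bZ^N$, hence is finite; the stabilizer $\Gamma_\chi$ of a point in a finite orbit has finite index. For the converse, suppose $[\Gamma : \Gamma_\chi] < \infty$. I want to show $\chi$ is rational, i.e. that $\chi$ takes values in the roots of unity, equivalently that $\ker \chi$ is a finite-index subgroup of $\bZ^N$.

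The key step is to exploit strong irreducibility. Since $\Gamma' := \Gamma_\chi$ has finite index in $\Gamma$, the standard representation of $\Gamma_\chi$ on $\bR^N$ (equivalently on $\bC^N$, since irreducibility over $\bR$ for a group with no invariant subspaces and here the relevant closure is Zariski connected — more carefully, one may pass to $\bC$ and use that strong irreducibility gives irreducibility of every finite-index subgroup) is irreducible. Now consider the subgroup $L = \ker\chi \leq \bZ^N$. I claim $L$ is $\Gamma_\chi$-invariant: for $\gamma \in \Gamma_\chi$ and $a \in L$ we have $\chi(\gamma a) = (\gamma^{-1*}\chi)(a)$; wait — more directly, $\gamma^*\chi = \chi$ means $\chi(\gamma^{-1}a) = \chi(a)$ for all $a$, so $\gamma^{-1}a \in \ker\chi$ whenever $a \in \ker\chi$, i.e. $\gamma^{-1}L \subseteq L$, and since $\gamma$ is invertible on $\bZ^N$ (it lies in $\GL_N(\bZ)$) we get $\gamma L = L$. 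Thus $L$ is a $\Gamma_\chi$-invariant subgroup of $\bZ^N$, hence $L \otimes \bR$ is a $\Gamma_\chi$-invariant subspace of $\bR^N$. By irreducibility of the $\Gamma_\chi$-representation, $L \otimes \bR$ is either $0$ or all of $\bR^N$. If $L \otimes \bR = \bR^N$ then $L$ has finite index in $\bZ^N$, so $\chi$ factors through a finite group and is rational, as desired. It remains to rule out $L \otimes \bR = 0$, i.e. $L = \ker\chi = 0$: in that case $\chi$ is an injective homomorphism $\bZ^N \hookrightarrow S^1$. But then I need a contradiction with $[\Gamma : \Gamma_\chi] < \infty$ together with $\Gamma$ infinite — or rather, I should handle this case directly; note that if $\ker\chi = 0$ and $N \geq 2$ this already contradicts the fact that $\Gamma_\chi$ acts irreducibly on $\bR^N$ while fixing the (generally irrational) line structure... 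Let me instead argue: actually the cleanest route avoids splitting off this degenerate case by a parity of argument — see below.

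The main obstacle is the degenerate case and the passage between real and complex irreducibility, so let me reorganize. I would argue: $[\Gamma:\Gamma_\chi]<\infty$ and $\Gamma$ strongly irreducible force $\Gamma_\chi$ to act irreducibly on $\bR^N$. The subgroup $\ker\chi \leq \bZ^N$ is $\Gamma_\chi$-invariant, so $V := (\ker\chi)\otimes\bR$ is a $\Gamma_\chi$-invariant subspace, hence $V = 0$ or $V = \bR^N$. If $V = \bR^N$ then $\ker\chi$ has rank $N$, so is finite-index in $\bZ^N$, and $\chi$ is rational. If $V = 0$, then $\ker\chi = 0$; but then, viewing $\chi$ as a homomorphism, the closure $\overline{\chi(\bZ^N)}$ is a connected subtorus of $S^1$ of positive dimension, forcing $N \leq \dim S^1 = 1$; so $N = 1$, in which case $\bR^1$ has no proper nonzero subspaces and the irreducibility hypothesis is vacuous — here I argue separately using that $\Gamma < \GL_1(\bZ) = \{\pm 1\}$ is finite, contradicting $\Gamma$ infinite, so the case $V = 0$ cannot occur at all. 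This completes the proof; the one genuinely delicate point that I would write out carefully is why a finite-index subgroup of a strongly irreducible group still acts irreducibly over $\bR$ — but this is exactly the definition of strong irreducibility as stated in the paper, so in fact there is nothing to check. Thus the entire proof is: easy direction by finite-orbit; hard direction by "invariant subgroup $\Rightarrow$ invariant subspace $\Rightarrow$ trivial or everything, and trivial is impossible for $N \geq 2$ while $N = 1$ is handled by finiteness of $\GL_1(\bZ)$."
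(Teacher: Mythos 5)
Your treatment of the easy direction (rational $\Rightarrow$ finite index via finiteness of the $\GL_N(\bZ)$-orbit of $\chi$) is fine and matches the paper in substance. The hard direction has a genuine gap: you need to rule out $\ker\chi = 0$, and the argument you give — that injectivity of $\chi:\bZ^N\to S^1$ forces $N\le 1$ because $\overline{\chi(\bZ^N)}$ is a positive-dimensional subtorus of $S^1$ — is simply false. A free abelian group of any rank embeds into $S^1$: take $\chi(a)=e^{2\pi i\langle\alpha,a\rangle}$ with $1,\alpha_1,\dots,\alpha_N$ linearly independent over $\bQ$; then $\ker\chi=0$ even though $N$ is arbitrary, and the closure of the image is all of $S^1$ regardless of $N$. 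So your case $V=0$ is not excluded.

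The paper closes this gap by using the hypothesis that $\Gamma$ is infinite in an essential way \emph{before} invoking irreducibility: since $[\Gamma:\Gamma_\chi]<\infty$ and $\Gamma$ is infinite, $\Gamma_\chi$ contains some $\gamma\neq I$. The identity $\gamma^*\chi=\chi$ reads $\chi(\gamma^{-1}a)=\chi(a)$ for all $a\in\bZ^N$, i.e. $(\gamma^{-1}-I)\bZ^N\subset\ker\chi$, and since $\gamma^{-1}-I\neq 0$ this subgroup is nonzero. Thus $\ker\chi$ is automatically non-trivial, $V=(\ker\chi)\otimes\bR$ is a non-trivial $\Gamma_\chi$-invariant subspace, and strong irreducibility forces $V=\bR^N$ directly. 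You cite $\Gamma$ infinite only in your fallback $N=1$ case, but it is needed precisely to produce a non-identity element of $\Gamma_\chi$ and hence a nonzero element of $\ker\chi$ for every $N$. With that observation in place of your subtorus argument, the rest of your proof goes through and coincides with the paper's.
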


\begin{proof}
Suppose that $[\Gamma : \Gamma_{\chi}]$ is finite, hence $\Gamma_{\chi}$ is non-trivial as $\Gamma$ is infinite. Then $\Lambda = \ker \chi < \bZ^N$ 
is a non-trivial $\Gamma_{\chi}$-invariant subgroup, and thus $V = \Lambda \otimes \bR < \bR^N$
is a non-trivial $\Gamma_\chi$-invariant linear subspace. By strong irreducibility of $\Gamma$, we have 
$V = \bR^N$, and thus $\Lambda$ must have finite index in $\bZ^N$. Let $m$ be the order
of $\bZ^N/\Lambda$. Then we have $\chi^m = 1$, and thus $\chi$ is rational. 

Suppose that $\chi$ is rational. Then $\Lambda = \ker \chi < \bZ^N$ has finite index, and 
$\Gamma$ acts on the finite set $\textrm{Im} \, \chi \cong \bZ^N/\Lambda$, which shows that 
$\Gamma_\chi = \Stab_\Gamma \Lambda$ has finite index in $\Gamma$.
\end{proof}

The main technical ingredient in the proof of Theorem \ref{mainerg} is the following deep result
by Benoist and Quint; see Th\'eorem\`e 1.3 in \cite{BQ1} and Corollary 1.10b) in \cite{BQ3}. If 
$\Gamma$ in addition contains an element with a dominant eigenvalue of multiplicity one, then
this result was established earlier by Bourgain, Furman, Lindenstrauss and Mozes; see
Theorem B in \cite{BFLM}.

\begin{theorem}
\label{BQ}
Let $(\Gamma,\mu)$ be a BQ-pair. For every $\chi \in \bT^N$ and $a \in \bZ^N \setminus \{0\}$, we have
\[
\lim_n \frac{1}{n} \sum_{j=1}^n \Big( \sum_{\gamma \in \Gamma} \chi(\gamma a) \mu^{*j}(\gamma) \Big)
=
0,
\]
if $[\Gamma : \Gamma_\chi] = \infty$, and 
\[
\lim_n \frac{1}{n} \sum_{j=1}^{n} \Big( \sum_{\gamma \in \Gamma} \chi(\gamma a) \mu^{*j}(\gamma) \Big)
=
\frac{1}{[\Gamma : \Gamma_\chi]} \sum_{\gamma \in \Gamma_\chi \backslash \Gamma} \chi(\gamma a),
\]
if $[\Gamma : \Gamma_\chi|] < \infty$.
\end{theorem}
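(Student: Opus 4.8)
The plan is to recognise this statement as the Fourier-coefficient form of the equidistribution of the $\mu$-random walk on the dual torus, and to deduce it from the equidistribution theorem of Benoist and Quint for random walks on $\bR^N/\bZ^N$ (respectively, when $\Gamma$ contains an element with a dominant eigenvalue of multiplicity one, the theorem of Bourgain--Furman--Lindenstrauss--Mozes). First I would fix the dictionary. Identify $\bT^N$ with $\bR^N/\bZ^N$ via $\psi\leftrightarrow\chi_\psi$, $\chi_\psi(b)=e^{2\pi i\langle\psi,b\rangle}$, put $\Gamma^t=\{\gamma^t:\gamma\in\Gamma\}<\GL_N(\bZ)$, and let $\mu^t$ be the image of $\mu$ under $\gamma\mapsto\gamma^t$. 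Since transpose is an algebraic anti-automorphism of $\GL_N$, the group $\Gamma^t$ is finitely generated, $\supp\mu^t$ generates it as a semigroup, its Zariski closure is Zariski-connected semisimple with no compact factors, and it is strongly irreducible (if $W\subseteq\bR^N$ is invariant under $\Lambda^t$ for some finite-index $\Lambda\leq\Gamma$, then $W^\perp$ is invariant under $\Lambda$, whence $W\in\{0,\bR^N\}$); so $(\Gamma^t,\mu^t)$ is again a BQ-pair. Relabelling increments, for $\theta\in\bR^N/\bZ^N$ corresponding to $\chi$ one has
\[
\sum_{\gamma\in\Gamma}\chi(\gamma a)\,\mu^{*j}(\gamma)=\widehat{\nu_j}(a),\qquad \nu_j:=(\mu^t)^{*j}*\delta_\theta,\qquad \widehat{\rho}(a):=\int e^{2\pi i\langle\psi,a\rangle}\,d\rho(\psi),
\]
where $(\mu^t)^{*j}*\delta_\theta$ is the time-$j$ law of the left $\mu^t$-random walk on $\bR^N/\bZ^N$ started at $\theta$. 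With $\bar\nu_n=\tfrac1n\sum_{j=1}^n\nu_j$, the theorem becomes the assertion that $\bar\nu_n$ converges weak-$*$ to the Haar measure $\lambda$ when $\Gamma^t\theta$ is infinite, and to the uniform probability measure on $\Gamma^t\theta$ when it is finite; evaluating the Fourier coefficient at $a\neq0$ then recovers the two displayed limits, since $\widehat\lambda(a)=0$ and $\tfrac1{|\Gamma^t\theta|}\sum_{\psi\in\Gamma^t\theta}e^{2\pi i\langle\psi,a\rangle}=\tfrac1{[\Gamma:\Gamma_\chi]}\sum_{\gamma\in\Gamma_\chi\backslash\Gamma}\chi(\gamma a)$ by orbit--stabilizer ($\Gamma_\chi=\Stab_\Gamma\theta$ for the right action $\theta\mapsto\gamma^t\theta$, with $\Gamma_\chi\backslash\Gamma\to\Gamma^t\theta$ a bijection), and by Lemma~\ref{ratfiniteindex} finiteness of $\Gamma^t\theta$ is equivalent to $[\Gamma:\Gamma_\chi]<\infty$, i.e.\ to $\chi$ being rational.

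The rational case I would dispatch directly. If $q\theta\in\bZ^N$, every $\gamma^t$ permutes the finite set $T_q\subset\bR^N/\bZ^N$ of $q$-torsion points, $\Gamma^t\theta\subseteq T_q$ is a single $\Gamma^t$-orbit, and $\nu_j$ is the time-$j$ law of the Markov chain on $\Gamma^t\theta$ driven by $\mu^t$. This chain is irreducible, because $\supp\mu^t$ generates $\Gamma^t$ as a semigroup and $\Gamma^t$ acts transitively on $\Gamma^t\theta$, and doubly stochastic, because each step is a bijection of $\Gamma^t\theta$; hence its unique stationary distribution is uniform and the Cesàro averages $\bar\nu_n$ converge to it, which is the second formula.

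The irrational case is where the real work lies: although each $\nu_j$ gives zero mass to $\bQ^N/\bZ^N$, this alone does not prevent weak-$*$ limits of $\bar\nu_n$ from charging rational points. Here I would invoke the Benoist--Quint equidistribution theorem, whose architecture is as follows. First, any weak-$*$ limit $\nu_\infty$ of $\bar\nu_n$ along a subsequence is $\mu^t$-stationary, since $\mu^t*\bar\nu_n-\bar\nu_n=\tfrac1n(\nu_{n+1}-\nu_1)\to0$. Next comes \emph{stiffness}: every $\mu^t$-stationary probability measure on $\bR^N/\bZ^N$ is $\Gamma^t$-invariant. Then \emph{classification}: a $\Gamma^t$-invariant ergodic probability measure is either $\lambda$ or the uniform measure on a finite orbit of torsion points --- invariance forces $\widehat{\nu_\infty}$ to be constant on $\Gamma$-orbits in $\bZ^N$ and the set $\{b\in\bZ^N:|\widehat{\nu_\infty}(b)|=1\}$ to be a $\Gamma$-invariant subgroup, hence $\{0\}$ (yielding $\lambda$) or of finite index (yielding a torsion orbit). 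Finally, \emph{no escape of mass}: a hypothetical atom of $\nu_\infty$ at a torsion point $x_0$ would force $\bar\nu_n$ to concentrate near $x_0$, but linearising the $\Gamma^t$-action at $x_0$ yields once more a random walk whose linear part is strongly irreducible and semisimple, which expands transversally and disperses. Combining these, for irrational $\theta$ one gets $\bar\nu_n\to\lambda$, hence the Fourier coefficient at $a\neq0$ tends to $0$, as claimed.

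The main obstacle is, unavoidably, the \emph{stiffness} and \emph{no escape of mass} steps: these are exactly where the semisimplicity and no-compact-factors hypotheses are used essentially, through the Benoist--Quint ``exponential drift'' renormalisation (or, when $\Gamma$ contains an element with a dominant eigenvalue of multiplicity one, through the Fourier-analytic bootstrap of Bourgain--Furman--Lindenstrauss--Mozes together with a uniform spectral gap for the linear action of $\Gamma$ on $\bR^N$). I would take these results as black boxes; the remaining ingredients --- the transpose bookkeeping, the finite doubly-stochastic Markov chain in the rational case, and the translation between weak-$*$ convergence and Fourier coefficients --- are routine.
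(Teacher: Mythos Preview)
Your proposal is correct and follows essentially the same route as the paper: the paper's proof is precisely the remark after Theorem~\ref{BQ}, which extracts the statement from Benoist--Quint's equidistribution theorem (Corollary~1.10(b) of \cite{BQ3}) combined with their measure classification (Th\'eor\`eme~1.3 of \cite{BQ1}), and then evaluates the limiting measure against the character $f(\chi)=\chi(a)$. Your transpose bookkeeping makes explicit what the paper leaves implicit (the $\Gamma$-action on $\bT^N$ used in the remark is exactly your $\Gamma^t$-action on $\bR^N/\bZ^N$ under the standard identification), and your separate finite-Markov-chain treatment of the rational case is a harmless elementary alternative to citing Benoist--Quint uniformly.
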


\begin{remark}
We stress that Theorem \ref{BQ} is not explicated in neither of the papers \cite{BQ1} or \cite{BQ3}. Under the assumption that $(\Gamma,\mu)$ is a BQ-pair, Corollary 1.10b) in \cite{BQ3} asserts that for every $\chi \in \bT^N$, there exists a $\Gamma$-invariant Borel probability measure on $\nu_\chi$ on $\bT^N$, supported on the closure of the $\Gamma$-orbit of $\chi$, such that for every continuous function $f : \bT^N \ra \bC$, we have
\[
\frac{1}{n} \sum_{k=1}^n \Big( \sum_{\gamma \in \Gamma} f((\gamma^{*})^{-1} \chi) \mu^{*j}(\gamma) \Big) 
=
\int_{\bT^N} f \, d\nu_{\chi}. 
\] 
By Th\'eorem\`e 1.3 in \cite{BQ1}, $\nu_{\chi}$ is either the counting probability measure on a the (finite) $\Gamma$-orbit of $\chi$ in $\bT^N$ (in which case the index $[\Gamma : \Gamma_\chi]$ is finite), or it is equal to the Haar probability measure on $\bT^N$. We get Theorem \ref{BQ} by letting $f(\chi) = \chi(a)$ for $a \in \bZ^N$.
\end{remark}

Let $(\cH,\pi)$ be a unitary $\bZ^N$-representation on a separable Hilbert space $\cH$. 
Given $\chi \in \bT^N$, we define
\[
\cH_{\chi} = \big\{ v \in \cH \, : \, \pi(a) v  = \chi(a)v, \enskip \textrm{for all $a \in \bZ^N$}\big\}.
\]
One readily verifies that if $\chi_1$ and $\chi_2$ are distinct elements in $\bT^N$, then $\cH_{\chi_1}$
and $\cH_{\chi_2}$ are orthogonal subspaces in $\cH$. Since $\cH$ is separable, we conclude that
there is a possibly empty, but at most countable, set $\Omega \subset \bT^N$ such that $\cH_{\chi}$ is a
non-trivial subspace for $\chi \in \Omega$. The set of rational elements in $\Omega$ 
will be denoted by $\cR_N$, which we shall refer to as the \emph{rational spectrum} of $(\cH,\pi)$, and 
we write
\[
\cH_{\textrm{rat}} = \bigoplus_{\chi \in \cR_N} \cH_\chi \subset \cH,
\]
where the direct sum is taken in the Hilbert space sense. The following lemma is an immediate consequence
of the definitions above and the second assertion in Theorem \ref{BQ}, so we omit the proof.

\begin{lemma}
\label{rational}
For every $v \in \cH_{\textrm{rat}}$ and $a \in \bZ^N$, we have
\[
\lim_n \frac{1}{n} \sum_{j=1}^n \Big( \sum_{\gamma \in \Gamma} \mu^{*j}(\gamma) \pi(\gamma a)v \Big) = 
\sum_{\chi \in \cR_N}
\Big(
\frac{1}{[\Gamma : \Gamma_\chi]} \sum_{\gamma \in \Gamma_\chi \backslash \Gamma} \chi(\gamma a)
\Big)v_{\chi}
\]
where $v = \sum v_{\chi}$ and $v_\chi \in \cH_{\chi}$.
\end{lemma}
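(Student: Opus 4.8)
The plan is to first reduce to a single rational character, where Theorem~\ref{BQ} applies verbatim, and then pass to the (possibly infinite) orthogonal sum defining $\cH_{\textrm{rat}}$ by a uniform–boundedness argument. Set
\[
T_n = \frac{1}{n} \sum_{j=1}^n \sum_{\gamma \in \Gamma} \mu^{*j}(\gamma)\, \pi(\gamma a),
\]
and observe that, since each $\mu^{*j}$ is a probability measure supported on $\Gamma$ and each $\pi(\gamma a)$ is unitary, each $T_n$ is a Ces\`aro average (over $j$) of convex combinations of unitary operators; hence $\|T_n\| \leq 1$ for all $n$.

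First I would fix $\chi \in \cR_N$ and $w \in \cH_\chi$. By definition $\pi(b) w = \chi(b) w$ for every $b \in \bZ^N$, and since $\gamma a \in \bZ^N$ whenever $\gamma \in \Gamma < \GL_N(\bZ)$, this gives $\pi(\gamma a) w = \chi(\gamma a) w$. Therefore
\[
T_n w = \Big( \frac{1}{n} \sum_{j=1}^n \sum_{\gamma \in \Gamma} \mu^{*j}(\gamma)\, \chi(\gamma a) \Big) w .
\]
Because $\chi$ is rational, Lemma~\ref{ratfiniteindex} gives $[\Gamma : \Gamma_\chi] < \infty$, so the second assertion of Theorem~\ref{BQ} applies when $a \neq 0$ and yields $T_n w \to c_\chi(a)\, w$, where $c_\chi(a) := \frac{1}{[\Gamma:\Gamma_\chi]} \sum_{\gamma \in \Gamma_\chi \backslash \Gamma} \chi(\gamma a)$; the case $a = 0$ is trivial, since then $T_n = I$ and $c_\chi(0) = 1$. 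By linearity, $T_n v' \to \sum_{\chi \in S} c_\chi(a)\, v_\chi$ for every finite $S \subset \cR_N$ and $v' = \sum_{\chi \in S} v_\chi$.

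Next I would pass to a general $v = \sum_{\chi \in \cR_N} v_\chi \in \cH_{\textrm{rat}}$. Since $|c_\chi(a)| \leq 1$ and the subspaces $\cH_\chi$ are mutually orthogonal, the series $\sum_{\chi \in \cR_N} c_\chi(a)\, v_\chi$ converges in $\cH$, and $\big\| \sum_{\chi \notin S} c_\chi(a) v_\chi \big\| \leq \|v - v_S\|$ for every finite $S \subset \cR_N$, where $v_S = \sum_{\chi \in S} v_\chi$. Given $\eps > 0$, pick such an $S$ with $\|v - v_S\| < \eps$. Then
\[
\textstyle \big\| T_n v - \sum_{\chi} c_\chi(a) v_\chi \big\| \leq \|T_n\|\, \|v - v_S\| + \big\| T_n v_S - \sum_{\chi \in S} c_\chi(a) v_\chi \big\| + \big\| \sum_{\chi \notin S} c_\chi(a) v_\chi \big\| ,
\]
where the first and third terms are each $< \eps$ (using $\|T_n\| \leq 1$, $|c_\chi(a)| \leq 1$, and orthogonality), while the middle term tends to $0$ as $n \to \infty$ by the finite case. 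Letting $n \to \infty$ and then $\eps \to 0$ completes the proof.

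I do not expect a genuine obstacle here: the whole content is the single–character computation together with Theorem~\ref{BQ}, and the only step that requires a little care is interchanging the limit in $n$ with the sum over the rational spectrum, which the uniform bounds $\|T_n\| \leq 1$ and $|c_\chi(a)| \leq 1$ dispatch immediately. This is precisely why the lemma can be stated, as the authors do, without a detailed proof.
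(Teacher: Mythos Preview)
Your argument is correct and is precisely the unpacking of what the paper intends: the authors omit the proof, calling the lemma ``an immediate consequence of the definitions above and the second assertion in Theorem~\ref{BQ},'' and your single-character computation plus the $\|T_n\|\le 1$ passage to the orthogonal sum is exactly how that immediacy is cashed out. The only additions you make beyond the paper's one-line remark---the explicit invocation of Lemma~\ref{ratfiniteindex} for finite index, the separate treatment of $a=0$, and the $\eps/3$ estimate for the infinite sum---are the natural details one writes down when asked to, and none of them departs from the paper's approach.
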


The full force of Theorem \ref{BQ} is released in the proof of the following lemma.

\begin{lemma}
\label{irrational}
For every $v \in \cH_{\textrm{rat}}^{\perp}$ and $a \in \bZ^N \setminus \{0\}$, we have
\[
\lim_n \frac{1}{n} \sum_{j=1}^n \Big( \sum_{\gamma \in \Gamma} \mu^{*j}(\gamma) \pi(\gamma a)v \Big) = 0.
\]
\end{lemma}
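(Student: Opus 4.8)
The plan is to reduce the claim to the scalar equidistribution statement in Theorem~\ref{BQ} by a spectral/direct-integral argument. First I would dispose of the case where the rational spectrum is all of $\cH$'s relevant part: here $\cH_{\textrm{rat}}^\perp$ consists of vectors $v$ all of whose spectral mass (with respect to the unitary $\bZ^N$-representation $\pi$) lies away from the rational characters, so by the spectral theorem for the commuting family $\{\pi(a)\}_{a \in \bZ^N}$ we may write $v$ via a projection-valued measure $P$ on $\bT^N$, with $\langle v, v\rangle = \int_{\bT^N} d\langle P(\chi)v,v\rangle$ and the measure $\sigma_v := \langle P(\cdot)v,v\rangle$ giving no mass to any rational $\chi$. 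The operator $Q_a^{(n)} := \frac1n\sum_{j=1}^n \sum_{\gamma} \mu^{*j}(\gamma)\pi(\gamma a)$ then acts on $v$ as multiplication by the function $\chi \mapsto \frac1n\sum_{j=1}^n\sum_\gamma \chi(\gamma a)\mu^{*j}(\gamma)$ under the spectral calculus, so that
\[
\big\| Q_a^{(n)} v \big\|^2 = \int_{\bT^N} \Big| \frac1n\sum_{j=1}^n\sum_{\gamma \in \Gamma} \chi(\gamma a)\mu^{*j}(\gamma) \Big|^2 \, d\sigma_v(\chi).
\]

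Next I would control the integrand pointwise. For a fixed $\chi$, set $c_n(\chi) = \frac1n\sum_{j=1}^n\sum_\gamma \chi(\gamma a)\mu^{*j}(\gamma)$; since each inner sum has modulus at most $1$, we have $|c_n(\chi)| \le 1$ for all $n$ and all $\chi$. By Theorem~\ref{BQ}, for every $\chi$ with $[\Gamma:\Gamma_\chi] = \infty$ we have $c_n(\chi) \to 0$; and by Lemma~\ref{ratfiniteindex}, since $\Gamma$ is infinite and strongly irreducible, $[\Gamma:\Gamma_\chi] < \infty$ forces $\chi$ to be rational. As $\sigma_v$ is a finite measure giving zero mass to the (countable) set of rational characters, the set $\{\chi : [\Gamma:\Gamma_\chi] < \infty\}$ is $\sigma_v$-null, so $c_n(\chi) \to 0$ for $\sigma_v$-a.e.\ $\chi$.

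Then the conclusion follows from the dominated convergence theorem: the integrands $|c_n(\chi)|^2$ are bounded by the constant $1$, which is $\sigma_v$-integrable since $\sigma_v$ is finite, and converge to $0$ $\sigma_v$-almost everywhere, so $\|Q_a^{(n)}v\|^2 \to 0$, which is exactly the assertion. One should also note that the norm limit defining $Q_a v$ exists (this is part of Theorem~\ref{mainerg} and can be extracted by combining Lemma~\ref{rational} on $\cH_{\textrm{rat}}$ with the present lemma on the orthogonal complement), but for the statement at hand only the vanishing on $\cH_{\textrm{rat}}^\perp$ is needed.

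The main obstacle I anticipate is purely bookkeeping around the spectral decomposition: one must be careful that $\cH_\chi$ for non-rational $\chi$ is generally \emph{not} where the mass of a typical $v \in \cH_{\textrm{rat}}^\perp$ sits (the projection-valued measure can be non-atomic), so it is essential to phrase the argument through the full spectral measure $\sigma_v$ on $\bT^N$ rather than through the discrete eigenspace decomposition $\bigoplus_\chi \cH_\chi$ used to define $\cH_{\textrm{rat}}$. Once that is set up correctly, the uniform bound $|c_n| \le 1$ and the pointwise convergence from Theorem~\ref{BQ}, together with the fact that $\sigma_v$ annihilates the rationals, make the dominated convergence step routine.
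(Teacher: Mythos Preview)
Your proof is correct and is essentially the same as the paper's. The only cosmetic difference is that you invoke the projection-valued spectral measure for the unitary $\bZ^N$-representation, whereas the paper obtains the identical scalar measure $\sigma_v$ via Bochner's theorem (and checks $\sigma_v(\{\chi\})=0$ for rational $\chi$ using von Neumann's ergodic theorem rather than the PVM identification $\cH_\chi = P(\{\chi\})\cH$); after that, both arguments compute $\|Q_a^{(n)}v\|^2 = \int_{\bT^N}|c_n|^2\,d\sigma_v$, use Lemma~\ref{ratfiniteindex} and Theorem~\ref{BQ} for pointwise convergence, and finish by dominated convergence.
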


\begin{proof}
Let $v \in \cH_{\textrm{rat}}^{\perp}$ with $\|v\| = 1$. By Bochner's Theorem, there exists a Borel probability measure
$\eta$ on $\bT^N$ such that
\[
\langle \pi(a)v,v\rangle = \int_{\bT^N} \chi(a) \, d\eta(\chi), \quad \textrm{for all $a \in \bZ^N$}
.\]
We observe that, by an application of von-Neumann's mean ergodic theorem to the unitary $\bZ^N$-representation $(\cH, \pi_{\chi})$ given by $$\pi_{\chi}(a)v=\chi(a)^{-1} \pi(v) \quad  \textrm{ for $a \in \bZ^N$  and  $v \in \cH$},$$ we have that $\eta(\{\chi\}) = 0$ for every \emph{rational} $\chi \in \bT^N$. We note that
\[
\Big\|  
\frac{1}{n} \sum_{j=1}^n \Big( \sum_{\gamma \in \Gamma} \mu^{*j}(\gamma) \pi(\gamma a)v \Big) 
\Big\|^2
= 
\int_{\bT^N} 
\Big| 
\frac{1}{n} \sum_{j=1}^n \Big( \sum_{\gamma \in \Gamma} \mu^{*j}(\gamma) \chi(\gamma a) \Big)
\Big|^2 \, d\eta(\chi),
\]
for all $n$. By Lemma \ref{ratfiniteindex}, we have $[\Gamma : \Gamma_{\chi}] = \infty$ for every irrational 
$\chi$ and $[\Gamma : \Gamma_{\chi}] < \infty$ for every rational $\chi$. Hence, by Theorem \ref{BQ}, we 
conclude that the right-hand side above converges to
\[
\sum_{\chi \in \cR_N} 
\Big|
\frac{1}{[\Gamma : \Gamma_\chi]} \sum_{\gamma \in \Gamma_\chi \backslash \Gamma} \chi(\gamma a)
\Big|^2 \, \eta(\{\chi\}) = 0,
\]
since $\eta(\{\chi\}) = 0$ for all $\chi \in \cR_N$, which finishes the proof.
\end{proof}

Upon combining Lemma \ref{rational} and Lemma \ref{irrational}, we conclude that the limits
\[
Q_a v = \lim_n \frac{1}{n} \sum_{j=1}^n \Big( \sum_{\gamma} \mu^{*j}(\gamma) \, \pi(\gamma)v \Big)
\]
exist for every $v \in \cH$ and $a \in \bZ^N \setminus \{0\}$, and 
\[
Q_a v = \sum_{\chi \in \cR_N} 
\Big(
\frac{1}{[\Gamma : \Gamma_\chi]} \sum_{\gamma \in \Gamma_\chi \backslash \Gamma} \chi(\gamma a)
\Big)v_{\chi},
\]
where $P_{\textrm{rat}} v = \sum v_\chi$ and $v_\chi \in \cH_{\chi}$. In particular, if $\cR_N$ is trivial, i.e.
if $\cR_N$ is either empty or consists solely of the trivial character $1$, then $Q_a$ coincides with the 
orthogonal projection onto the closed subspace of $\pi(G)$-invariant elements in $\cH$. \\

The second assertion of Theorem \ref{mainerg} follows from the following lemma. Here, $P_\textrm{rat}$ denotes the orthogonal projection onto $\cH_{\textrm{rat}}$.

\begin{lemma}
\label{approx}
For every $\eps > 0$ and $v \in \cH$, there exists a positive integer $k$ such that 
\[
\big\| Q_a v - P_{\textrm{rat}}v \big\| < \eps, \quad \textrm{for all $a \in k \bZ^N \setminus \{0\}$}.
\]
\end{lemma}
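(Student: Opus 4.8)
The plan is to reduce the statement to a uniform estimate on the ``twisting coefficients''
\[
c_\chi^{(n)}(a) \;=\; \frac{1}{n}\sum_{j=1}^n \Big(\sum_{\gamma\in\Gamma}\mu^{*j}(\gamma)\chi(\gamma a)\Big),
\qquad \chi\in\bT^N,\ a\in\bZ^N\setminus\{0\},
\]
and then apply Bochner's theorem together with Theorem \ref{BQ} as in the proof of Lemma \ref{irrational}. First I would fix $v\in\cH$, write $v = P_{\textrm{rat}}v + w$ with $w\in\cH_{\textrm{rat}}^\perp$, and observe that by Lemma \ref{irrational} the contribution of $w$ already tends to $0$ in norm for \emph{every} fixed nonzero $a$; but this is not uniform in $a$, so one cannot simply invoke it. Instead I would keep the spectral picture: by Bochner's theorem applied to $w$ there is a finite Borel measure $\eta_w$ on $\bT^N$ with $\eta_w(\{\chi\})=0$ for all rational $\chi$ (by von Neumann, exactly as in Lemma \ref{irrational}), and
\[
\Big\|\tfrac1n\sum_{j=1}^n\sum_\gamma \mu^{*j}(\gamma)\pi(\gamma a)w\Big\|^2
= \int_{\bT^N}\big|c_\chi^{(n)}(a)\big|^2\,d\eta_w(\chi).
\]
For $P_{\textrm{rat}}v = \sum_{\chi\in\cR_N} v_\chi$, Lemma \ref{rational} identifies the limit of the averages as $\sum_\chi \big(\tfrac{1}{[\Gamma:\Gamma_\chi]}\sum_{\gamma\in\Gamma_\chi\backslash\Gamma}\chi(\gamma a)\big)v_\chi$, so I would need to show that for $a$ in a suitable $k\bZ^N$ each inner sum equals $1$, i.e. that $\chi(\gamma a)=1$ for all $\gamma\in\Gamma$ whenever $\chi\in\cR_N$ and $a\in k\bZ^N$.

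The key step for the rational part is a finiteness observation: the rational spectrum $\cR_N$ is at most countable, but to get a \emph{single} $k$ I would first reduce to finitely many $\chi$. Given $\eps>0$, since $\|P_{\textrm{rat}}v\|^2 = \sum_{\chi\in\cR_N}\|v_\chi\|^2 < \infty$, choose a finite subset $S\subset\cR_N$ with $\sum_{\chi\in\cR_N\setminus S}\|v_\chi\|^2 < \eps^2$. Each $\chi\in S$ is rational, so there is $m_\chi$ with $\chi(m_\chi a)=1$ for all $a$; let $k_1 = \operatorname{lcm}_{\chi\in S} m_\chi$. Then for $a\in k_1\bZ^N$ and $\chi\in S$ we have $\chi(\gamma a) = \chi^{m_\chi\cdot(\ldots)} = 1$ wait --- more carefully, $k_1 a \in m_\chi\bZ^N$ so $\chi(\gamma\cdot k_1 a') = 1$ since $\gamma k_1 a'\in\bZ^N$ and $\chi$ kills $m_\chi\bZ^N$; here one uses that $\gamma\in\GL_N(\bZ)$ preserves $\bZ^N$ but one must check $\gamma(m_\chi\bZ^N)$ is killed by $\chi$, which holds because $\chi(\gamma m_\chi a') = (\gamma^{*-1}\chi)(m_\chi a')$ and $\gamma^{*-1}\chi$ is again rational of the same order $m_\chi$. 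Hence the inner coefficient is exactly $1$ on $S$, and the $\chi\in\cR_N\setminus S$ terms contribute at most $\eps$ in norm to $\|Q_a P_{\textrm{rat}}v - P_{\textrm{rat}}v\|$.

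For the irrational part I would split $\eta_w = \eta_w|_{U} + \eta_w|_{\bT^N\setminus U}$ where $U$ is a small open neighbourhood of the (closed, countable, measure-zero) set of rational points chosen so that $\eta_w(U)<\eps^2$; this uses regularity of $\eta_w$ and $\eta_w(\{\chi\})=0$ at rationals. On the complement $K=\bT^N\setminus U$, which is compact and contains no rational characters, Lemma \ref{ratfiniteindex} gives $[\Gamma:\Gamma_\chi]=\infty$ for every $\chi\in K$, so by Theorem \ref{BQ}, $c_\chi^{(n)}(a)\to 0$ as $n\to\infty$ for each such $\chi$ and each $a\ne 0$. The main obstacle --- and the reason $k$ must appear --- is that this convergence is \emph{not uniform} over $K$ or over $a$, so one cannot directly bound $\int_K|c_\chi^{(n)}(a)|^2 d\eta_w$. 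The resolution I would use: since $|c_\chi^{(n)}(a)|\le 1$ always, $\big|Q_a w\big\|^2 = \lim_n\int_{\bT^N}|c^{(n)}_\chi(a)|^2 d\eta_w \le \eta_w(U) + \int_K \lim_n|c^{(n)}_\chi(a)|^2 d\eta_w$ by dominated convergence, and the last integral is $0$ by Theorem \ref{BQ}, so in fact $\|Q_a w\|^2 \le \eta_w(U) < \eps^2$ for \emph{every} $a\ne 0$ --- the neighbourhood $U$ (hence the error) does not depend on $a$ at all. Thus $w$ never needs the constraint $a\in k\bZ^N$; only the finitely many rational characters in $S$ do, via $k = k_1$. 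Combining, for $a\in k\bZ^N\setminus\{0\}$,
\[
\|Q_a v - P_{\textrm{rat}}v\| \;\le\; \|Q_a w\| + \|Q_a P_{\textrm{rat}}v - P_{\textrm{rat}}v\| \;<\; \eps + \eps,
\]
and rescaling $\eps$ at the outset finishes the proof. The step I expect to require the most care is making the dominated-convergence interchange rigorous (justifying that $\|Q_a w\|^2 = \lim_n \int |c_\chi^{(n)}(a)|^2 d\eta_w$ and then pushing the limit inside), and correctly tracking that $\gamma^{*-1}\chi$ stays rational of the same period so that the lcm $k_1$ genuinely works for all $\gamma\in\Gamma$ simultaneously.
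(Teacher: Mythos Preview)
Your argument is correct, and the treatment of the rational part (choosing a finite $S\subset\cR_N$, taking $k$ to be the lcm of the periods, and bounding the tail by $\eps$) is exactly the paper's proof.

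However, you take an unnecessary detour on $\cH_{\textrm{rat}}^\perp$. You write that Lemma~\ref{irrational} only shows the contribution of $w$ ``tends to $0$'' for each fixed $a$ and worry that this is not uniform in $a$. But Lemma~\ref{irrational} says precisely that $Q_a w = 0$ for every $a\in\bZ^N\setminus\{0\}$: the operator $Q_a$ is already the limit in $n$, so the conclusion is an exact identity, not an approximation, and is trivially uniform. The paper's proof therefore opens with the single sentence ``Since $Q_a = 0$ on $\cH_{\textrm{rat}}^\perp$, it suffices to prove the lemma for $v\in\cH_{\textrm{rat}}$,'' and your entire Bochner/neighbourhood argument for $w$ is superfluous. (Incidentally, the set of rational characters in $\bT^N$ is dense, not closed; your outer-regularity construction of $U$ still works, but in fact dominated convergence alone already gives $\|Q_a w\|^2 = \int \lim_n |c_\chi^{(n)}(a)|^2\,d\eta_w = 0$, since the pointwise limit is supported on rationals and $\eta_w$ has no rational atoms---this is just Lemma~\ref{irrational} again.) So your bound $\|Q_a w\| < \eps$ is correct but weak: the true value is $0$, and no $\eps/2$ bookkeeping is needed there.
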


\begin{proof}
Since $Q_a = 0$ on $\cH_{\textrm{rat}}^\perp$, it suffices to prove the lemma for $v \in \cH_{\textrm{rat}}$. 
Pick $\eps > 0$ and $v \in \cH_{\textrm{rat}}$ and choose a finite set $F \subset \bR_N$ such that
\[
\sum_{\chi \notin F} \|v_\chi\|^2 < \eps^2.
\]
Since $F$ is a finite set, we can find at least one positive integer $k$ such that $\chi(ka) = 1$ for all $\chi \in F$ and $a \in \bZ^N$. We note that this implies that $Q_{ka} v_\chi  = v_\chi$ for all $a \in \bZ^N$, and thus
\[
\big\|Q_a v - v \big\| = \big\| \sum_{\chi \notin F} Q_a v_\chi \big\| \leq  \big(\sum_{\chi \notin F} \big\| v_\chi\|^2\Big)^{\frac{1}{2}} < \eps,
\]
since $\|Q_a v\| \leq \|v\|$ for all $v \in \cH$ and $Q_a \cH_\chi \subset \cH_\chi$ for all $\chi \in \cR_N$.
\end{proof}

\section{Bohr sets and rational spectrum}
\label{Bohr}
We say that $E \subset \bZ^N$ is a \emph{Bohr set} if there exist a compact and second countable abelian group $K$ with Haar probability measure $m_K$, a homomorphism $\tau : \bZ^N \ra K$ with dense image, and a non-empty open set $U \subset K$ with $m_K(\overline{U}) = m_K(U)$ such that $E = \tau^{-1}(U)$. If $K$ is connected, we say that $E$ is \emph{aperiodic}, and if $U$ contains the identity element of $K$, we say that $B$ is an 
\emph{aperiodic Bohr$_o$} set. We note that if $B \subset \bZ^N$ is any aperiodic Bohr$_o$-set, then one can always
find another Bohr$_o$-set $C$ such that $C-C \subset B$.

\begin{example}
We give here an example of an aperiodic Bohr set in $\bZ$. Let $K = \bR/\bZ$ and suppose that $\alpha$ is an \emph{irrational} number. Then $\tau(a) = a \cdot \alpha \mod 1$ is a homomorphism from $\bZ$ into $K$ with dense image. Let $U \subset K$ be an open subset, e.g. an open interval. Then 
\[
B = \tau^{-1}(U) = \big\{ a \in \bZ \, : \, a \cdot \alpha \mod 1 \in U \big\} \subset \bZ
\]
is an aperiodic Bohr set in $\bZ$. More generally, for every integer $N$, we can form the homomorphism 
$\tau_N : \bZ^N \ra K^N$ defined by
\[
\tau(a_1,\ldots,a_N) = (\tau(a_1),\ldots,\tau(a_N)), \quad \textrm{for $(a_1,\ldots,a_N) \in \bZ^N$}.
\]
One can readily check that $\tau_N$ has dense image in $K^N$, and thus
\[
B \times \cdots \times B = \tau_N^{-1}(U \times \cdots \times U) \subset \bZ^N
\]
is an aperiodic Bohr$_o$-set in $\bZ^N$.
\end{example}

We can make $(K,m_K)$
into a $\bZ^N$-space with the $\bZ^N$-action defined by
\[
a \cdot x = x - \tau(a), \quad \textrm{for $x \in K$ and $a \in \bZ^N$}.
\]
We denote by $\pi_K$ the regular representation of $\bZ^N$ on $L^2(K,m_K)$ and given 
$\chi \in \bT^N$, we define 
\[
L^2(K,m_K)_\chi = \big\{ f \in L^2(K,m_K) \, : \, \pi_K(a)f = \chi(a) f \big\}.
\]
Let $\widehat{K}$ denote the dual of $K$. We can view $\eta \in \widehat{K}$ as an element
in $L^2(K,m_K)$ with the property that $\pi_K(a)\eta = \eta(\tau(a)) \eta$ for all $a \in \bZ^N$. 
Note that if $\eta_1, \eta_2 \in \widehat{K}$ satisfy $\eta_1 \circ \tau = \eta_2 \circ \tau$,
then $\eta_1 = \eta_2$ since the image of $\tau$ is dense. In particular, for every 
$\chi \in \bT^N$ of the form $\chi = \eta \circ \tau$, we have 
\[
L^2(K,m_K)_\chi = \bC \cdot \eta.
\]
Since all $\eta$ are orthogonal to each other in $L^2(K,m_K)$, and together span $L^2(K,m_K)$,
we conclude that
\[
L^2(K,m_K) = \bigoplus_{\eta \in \widehat{K}} L^2(K,m_K)_{\eta \circ \tau},
\]
where the direct sum is taken in the Hilbert space sense. Suppose that $\chi = \eta \circ \tau$ is 
rational, i.e. assume that there exists a positive integer $m$ such that $\chi^m = 1$. Then,
\[
\chi(a)^m = \eta(m\tau(a)) = \eta(\tau(ma)) = 1, \quad \textrm{for all $a \in \bZ^N$},
\]
and thus $\eta(k) = 1$ for all $k \in L$, where $L := \overline{\tau(m\bZ^N)} < K$, by continuity of $\eta$.
One readily shows that $L$ has finite index in $K$ and thus is an open subgroup of $K$. In particular,
if $K$ is \emph{connected}, then $L = K$, and $\eta = 1$, which establishes the following lemma.

\begin{lemma}
\label{bohrirrational}
Let $K$ be a compact and connected abelian group and suppose that $\tau : \bZ^N \ra K$ is a homomorphism
with dense image. Then the associated $\bZ^N$-space $(K,m_K)$ has trivial rational spectrum.
\end{lemma}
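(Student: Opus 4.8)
The plan is to unwind the definitions and reduce the statement to the computation already carried out in the paragraph immediately preceding it. Recall that the rational spectrum of the $\bZ^N$-space $(K,m_K)$ is, by definition, the set of rational characters $\chi \in \bT^N$ for which $L^2(K,m_K)_\chi \neq \{0\}$. The first step is to record the orthogonal decomposition
\[
L^2(K,m_K) = \bigoplus_{\eta \in \widehat{K}} L^2(K,m_K)_{\eta \circ \tau},
\]
which follows from Peter--Weyl (the characters $\eta \in \widehat{K}$ form an orthonormal basis of $L^2(K,m_K)$) together with the injectivity of $\eta \mapsto \eta \circ \tau$ coming from the density of $\tau(\bZ^N)$ in $K$. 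Consequently every character $\chi$ in the rational spectrum must be of the form $\chi = \eta \circ \tau$ for some $\eta \in \widehat{K}$, and it suffices to show that if such a $\chi$ is rational then $\eta$ is the trivial character, since then $\chi = 1$ and the rational spectrum is trivial.

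The second step is exactly the displayed computation before the lemma: if $\chi = \eta \circ \tau$ satisfies $\chi^m = 1$ for some positive integer $m$, then for every $a \in \bZ^N$ we have $\eta(\tau(ma)) = \chi(ma) = 1$, so $\eta$ vanishes on $\tau(m\bZ^N)$, and by continuity $\eta \equiv 1$ on the closed subgroup $L := \overline{\tau(m\bZ^N)} < K$. The third and final step is to invoke connectedness of $K$: one checks that $L$ has finite index in $K$ — indeed $\tau(\bZ^N)/\tau(m\bZ^N)$ has order dividing $m^N$, so its closure $L$ is a closed subgroup of finite index, hence open — and a connected group has no proper open subgroups, forcing $L = K$. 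Therefore $\eta \equiv 1$ on all of $K$, i.e. $\eta$ is trivial, as required.

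There is really no serious obstacle here; the only point demanding a word of care is the claim that $L$ has finite index in $K$, which is why the argument needs $\tau$ to be applied to the \emph{full} lattice $\bZ^N$ rather than an arbitrary subgroup. The finite-index assertion follows because the image of the finite group $\bZ^N/m\bZ^N \cong (\bZ/m\bZ)^N$ under the map induced by $\tau$ exhausts $\tau(\bZ^N)/\tau(m\bZ^N)$, so this quotient is finite; passing to closures inside the compact group $K$ preserves the index, and a finite-index closed subgroup of a topological group is automatically open (its cosets are finitely many closed sets covering $K$, so each is also open). Then connectedness of $K$ finishes the proof, since the only open subgroup of a connected group is the whole group. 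All of this is routine, so in the write-up I would simply cite the paragraph preceding the lemma and add the one sentence about $L$ being open of finite index.
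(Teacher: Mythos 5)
Your proof is correct and follows exactly the same route as the paper: the Peter--Weyl decomposition $L^2(K,m_K)=\bigoplus_{\eta\in\widehat K}L^2(K,m_K)_{\eta\circ\tau}$, the observation that a rational $\chi=\eta\circ\tau$ forces $\eta$ to vanish on $L=\overline{\tau(m\bZ^N)}$, and connectedness of $K$ to conclude $L=K$. You also usefully fill in the step the paper leaves to the reader (``one readily shows that $L$ has finite index''), by noting that $[\tau(\bZ^N):\tau(m\bZ^N)]$ divides $m^N$ and passing to closures. One small phrasing quibble: taking closures does not \emph{preserve} the index (it can strictly drop, e.g.\ $\overline{\tau(2\bZ)}=\bR/\bZ$ when $\tau(n)=n\alpha$ with $\alpha$ irrational), but it does keep it finite, since $K=\overline{\bigcup_i d_i\tau(m\bZ^N)}=\bigcup_i d_i L$ for finitely many coset representatives $d_i$; that is all the argument needs.
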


\section{Acknowledgements}

The authors have greatly benefited from discussions with Alexander Fish, Peter Hegarty and Jean-Fran\c{c}ois Quint. This work was carried out during the second author's visit to Chalmers University in September and October of 2015 funded by the University of Sydney's \textit{Philipp Hofflin International Research Travel Scholarship}, for which he is very grateful for. He would like to also thank the Department of Mathematical Sciences at Chalmers University for their hospitality. 
\section{Appendix I: Correspondence Principle}
\label{AppendixI}
We shall now explain how one can deduce Corollary \ref{maincor} from Theorem \ref{main}. The arguments 
in this section are nowadays rather standard, and can be traced back to the seminal paper \cite{Fur} by Furstenberg.

Suppose that $E \subset \bZ^N$. We may view $E$ as an element in the compact and second countable space $2^{\bZ^N}$ of all subsets of $\bZ^N$ equipped with the product topology, on which $\bZ^N$
acts by homeomorphisms via 
\[
a \cdot A = A - a, \quad \textrm{for $A \in 2^{\bZ^N}$ and $a \in \bZ^N$}.
\]
Let $X$ denote the closure of $\bZ^N \cdot E$ in $2^{\bZ^N}$. Then $X$ is again a compact and second countable
space, and
\begin{equation}
\label{openE}
V = \big\{ A \in X \, : \, 0 \in A \big\} \subset X,
\end{equation}
is a clopen (closed and open) subset of $X$. We note that $E = \big\{ a \in \bZ^N \, : \, a \cdot E \in V \big\}$. In 
other words, $E$ can be realized as the "hitting times" of the set $V$ of the $\bZ^N$-orbit of $E$ in $X$. \\

More generally, let $X$ be a compact and second space, equipped with an action of $\bZ^N$ by homeomorphisms.
Given a subset $U \subset X$ and $x \in X$, we define
\[
U_{x} = \big\{ a \in \bZ^N \, : \, a \cdot x \in U \big\} \subset \bZ^N.
\]
For instance, if $K$ is a compact and connected second countable group, $\tau : \bZ^N \ra K$ is a homomorphism
with dense image and $(K,m_K)$ denotes the associated $\bZ^N$-space defined in Section \ref{Bohr}, then for any non-empty open
subset $U \subset K$, we see that
\begin{equation}
\label{bohrset}
U_{0} = \big\{ a \in \bZ^N \, : \, \tau(a) \in -U \big\} = \tau^{-1}(-U) \subset \bZ^N
\end{equation}
is an aperiodic Bohr set. Since $K$ is connected, the $\bZ^N$-space $(K,m_K)$ has trivial rational spectrum by 
Lemma \ref{bohrirrational}.\\

Let $F_n = [-n,n]^N \subset \bZ^N$ and define the \emph{upper Banach density} of a subset $E \subset \bZ^N$ by
\[
d^*(E) = \sup
\Big\{ 
\limsup_n 
\frac{|E \cap (F_n + a_n)|}{|F_n|} 
\, : \, 
\textrm{$(a_n)$ is a sequence in $\bZ^N$} 
\Big\}.
\]
In particular, 
\[
d^*(E) \geq 
\limsup_n 
\frac{|E \cap F_n|}{|F_n|}, \quad \textrm{for all $E \subset \bZ^N$}.
\]
Let $\cP_{\bZ^N}(X)$ denote the (non-empty) convex set of $\bZ^N$-invariant Borel probability measures on the
compact and second countable space $X$. The following proposition can now be deduced from Theorem 1.1 in \cite{Fur}.

\begin{proposition}
\label{furst}
Suppose that $U \subset X$ is open and $x_o \in X$ has a dense $\bZ^N$-orbit in $X$. Then,
\[
d^*\Big( \bigcap_{a \in F} \big(U_{x_o} - a\big)\Big) \geq \nu\Big( \bigcap_{a \in F} a \cdot U\Big), 
\]
for every finite set $F \subset \bZ^N$ and $\nu \in \cP_{\bZ^N}(X)$. Furthermore, if $\nu(\overline{U}) = \nu(U)$ for all $\nu \in \cP_{\bZ^N}$, then 
\[
d^*(U_{x_o}) = \nu(U), \quad \textrm{for some \emph{ergodic} $\nu \in \cP_{\bZ^N}(X)$}.
\]
\end{proposition}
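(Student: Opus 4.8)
The plan is to prove both parts via a form of Furstenberg's correspondence principle: realize the upper Banach density $d^*$ through $\bZ^N$-invariant measures on the orbit closure $X$, and transport information between the distinguished point $x_o$ (which a priori is seen by no given $\nu$) and $\nu$-generic points, using compactness and the lower semicontinuity of indicators of open sets.

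For the first inequality I would begin with a reduction to a single open set. Since the $\bZ^N$-action is by bijections and $\bZ^N$ is abelian, for a finite $F\subset\bZ^N$ one checks directly that $\bigcap_{a\in F}(U_{x_o}-a) = W_{x_o}$, where $W\subset X$ is the corresponding finite intersection of translates of $U$, in particular open; explicitly, $b\in U_{x_o}-a$ says $(b+a)\cdot x_o\in U$, i.e. $b\cdot x_o\in a^{-1}\cdot U$, and one intersects over $a\in F$. It thus suffices to prove: for every open $W\subset X$ and every $\nu\in\cP_{\bZ^N}(X)$ one has $d^*(W_{x_o})\geq\nu(W)$. To this end I would first extract a \emph{good} point. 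By $\bZ^N$-invariance, $\int_X \frac{1}{|F_n|}\sum_{a\in F_n}\mathbf{1}_W(a\cdot x)\,d\nu(x) = \nu(W)$ for every $n$, and since these integrands are bounded by $1$, a reverse-Fatou argument (or the pointwise ergodic theorem) produces a point $x\in X$ with $\limsup_n \frac{1}{|F_n|}\sum_{a\in F_n}\mathbf{1}_W(a\cdot x)\geq\nu(W)$. I would then transfer this to $x_o$: pick $b_k\in\bZ^N$ with $b_k\cdot x_o\to x$ (possible since $X = \overline{\bZ^N\cdot x_o}$); for each \emph{fixed} $n$ the map $y\mapsto\frac{1}{|F_n|}\sum_{a\in F_n}\mathbf{1}_W(a\cdot y)$ is lower semicontinuous, so $\liminf_k \frac{1}{|F_n|}\sum_{a\in F_n}\mathbf{1}_W\big((a+b_k)\cdot x_o\big)\geq\frac{1}{|F_n|}\sum_{a\in F_n}\mathbf{1}_W(a\cdot x)$. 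Choosing $c_n := b_{k(n)}$ diagonally along the subsequence of $n$'s where the average at $x$ is close to $\nu(W)$ gives $\frac{|W_{x_o}\cap(F_n+c_n)|}{|F_n|}\geq\frac{1}{|F_n|}\sum_{a\in F_n}\mathbf{1}_W(a\cdot x)-\frac1n$, and hence $d^*(W_{x_o})\geq\nu(W)$.

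For the second assertion, assume $\nu(\overline U) = \nu(U)$ for all $\nu\in\cP_{\bZ^N}(X)$. The lower bound $d^*(U_{x_o})\geq\nu(U)$ for every invariant $\nu$ is the case $F=\{0\}$ above. For the reverse bound I would choose translates $a_n$ with $\frac{|U_{x_o}\cap(F_n+a_n)|}{|F_n|}\to d^*(U_{x_o})$ (passing to a subsequence), form the empirical measures $\nu_n := \frac{1}{|F_n|}\sum_{a\in F_n}\delta_{(a+a_n)\cdot x_o}$ on the compact metrizable space $X$, and extract a weak-$*$ limit $\nu$; the F\o lner property of $(F_n)$ forces $\nu\in\cP_{\bZ^N}(X)$. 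Since $\overline U$ is closed and $\nu_n(\overline U)\geq\frac{|U_{x_o}\cap(F_n+a_n)|}{|F_n|}$, weak-$*$ upper semicontinuity yields $\nu(\overline U)\geq d^*(U_{x_o})$, so $\nu(U) = \nu(\overline U)\geq d^*(U_{x_o})$. Together with the lower bound this gives $d^*(U_{x_o}) = \nu(U) = \sup\{\nu'(U):\nu'\in\cP_{\bZ^N}(X)\}$. Finally I would decompose $\nu = \int\nu_\omega\,d\bP(\omega)$ into ergodic components; from $\nu(U) = \int\nu_\omega(U)\,d\bP(\omega)$ with each $\nu_\omega(U)\leq\nu(U)$, it follows that $\nu_\omega(U) = \nu(U)$ for $\bP$-a.e.\ $\omega$, and any such ergodic $\nu_\omega$ satisfies $d^*(U_{x_o}) = \nu_\omega(U)$.

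I expect the transfer step in the first part to be the main obstacle, since $x_o$ is a priori invisible to $\nu$ and no averaging statement can be invoked at $x_o$ directly; the resolution is to run the averaging at a $\nu$-generic $x$ and then use that $x$ lies in the orbit closure of $x_o$ together with lower semicontinuity of the finitely many translated indicators of the open set $W$, the diagonal choice of $c_n$ converting this into the required sequence of F\o lner translates. The remaining ingredients — invariance of weak-$*$ limits of F\o lner empirical averages, the collapse of $\bigcap_{a\in F}(U_{x_o}-a)$ to a single $W_{x_o}$, and the ergodic-decomposition extraction of an extreme measure — are routine.
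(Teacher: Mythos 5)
Your argument is correct and is precisely the Furstenberg correspondence principle; the paper itself gives no proof of Proposition \ref{furst} and simply states that it "can be deduced from Theorem 1.1 in \cite{Fur}", which is this same route. Both halves of your argument — transferring the F{\o}lner averages from a $\nu$-good point to the transitive point $x_o$ via lower semicontinuity of the averaged indicator of the open set, and for the equality case extracting a weak-$*$ limit of empirical measures along an optimizing F{\o}lner sequence followed by ergodic decomposition — are exactly the standard ingredients behind Furstenberg's result.
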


\subsection{Proof of Corollary \ref{maincor}} 
Suppose that $(X,\nu)$ is a compact and second countable $\bZ^N$-space, and let $U \subset X$ be a
non-empty open set such that $\nu(\overline{U}) = \nu(U)$ for all $\nu \in \cP_{\bZ^N}(X)$. 
For instance, we could choose:
\begin{itemize}
\item $X$ to be the orbit closure of the set $E \subset \bZ^N$ and $U = V$ as in \eqref{openE}. In this case, 
$U$ is a non-empty clopen set, and there exists an ergodic $\nu \in \cP_{\bZ^N}(X)$ such that 
\[
d^*(E) = d^*(U_{E}) = \nu(U).
\]
\item $K$ to be a compact, connected and second countable group, $\tau : \bZ^N \ra K$ a homomorphism with
dense image and $(X,\nu) = (K,m_K)$ the $\bZ^N$-space associated to $(K,\tau)$ as in Section \ref{Bohr}. In 
this case, $m_K$ is the unique $\bZ^N$-invariant Borel probability measure on $K$. In particular, for any open
subset such that $m_K(\overline{U}) = m_K(U)$, we have $d^*(\tau^{-1}(U)) = m_K(U)$, and $E = \tau^{-1}(U)$
is an aperiodic Bohr set.
\end{itemize}
Let $\Gamma < \GL_N(\bZ)$ and $a_1,\ldots,a_m \in \bZ^N$. In the first case above, Proposition \ref{furst} 
guarantees that
\[
d^*(E) = \nu(V) \qand d^*\Big( \bigcap_{j=1}^m \big(E-\gamma_j a_j)\Big) \geq \nu\Big(\bigcap_{j=1}^m (\gamma_j a_j) \cdot V \Big),
\]
for all $\gamma_1,\ldots,\gamma_m$, and in the second case above, Proposition \ref{furst} asserts that
\[
d^*(E) = m_K(U) \qand d^*\Big( \bigcap_{j=1}^m \big(\tau^{-1}(U)-\gamma_j a_j)\Big) \geq m_K \Big(\bigcap_{j=1}^m (\gamma_j a_j) \cdot U \Big),
\]
for all $\gamma_1,\ldots,\gamma_m$. \\

Let $\Gamma$ be as in Theorem \ref{general} and suppose that $(X,\nu)$ and $U$ are as in one of the two
examples above. Let $\eps > 0$ and let $m$ be a positive integer. By Theorem \ref{main}, there exist a positive 
integer $k$ with the property that whenever $a_1,\ldots, a_m \in k \bZ^N$, then 
\begin{equation}
\label{VE}
\nu\Big( \bigcap_{j=1}^m (\gamma_j a_j) \cdot U \Big) \geq \nu(U)^{m} - \eps, \quad \textrm{for some $\gamma_1,\ldots, \gamma_m \in \Gamma$}.
\end{equation}
Furthermore, if $(X,\nu)$ has trivial spectrum, as in the second example above (by Lemma \ref{bohrirrational}), then $k$ can be chosen to be $1$. \\

Upon combining the bounds above, we conclude that for all $a_1,\ldots, a_m \in k\bZ^N$, we have 
\[
d^*\Big( \bigcap_{j=1}^m \big(E - \gamma_j a_j \big) \Big) \geq d^*(E)^{m} - \eps,
\quad
\textrm{for some $\gamma_1,\ldots,\gamma_m \in \Gamma$}.
\]
In the case when $(X,\nu) = (K,m_K)$, the integer $k$ can be chosen to be $1$.

\section{Appendix II: Verifying the conditions for a BQ-pair}
\label{appendix: BQ verification}

We now verify that our examples satisfy the conditions of a BQ-pair. Note that in each of our examples we have a polynomial group homomorphism $\rho:G \to \SL_N(\mathbb{R})$ for some Zariski closed subgroup $G \leq \GL_d(\mathbb{R})$, which then defines an action of $G$ on $\bR^N$ given by $g \cdot v=\rho(g)v$. For example, in Theorem~\ref{cons1} we consider the adjoint representation $\Ad:\SL_d(\mathbb{R})) \to GL(\mathfrak{sl}_d(\bR))$, given by $$\Ad(g)v=gvg^{-1} \quad \text{ for } g\in \SL_d(\mathbb{R}) \text{ and } v \in \mathfrak{sl}_d(\bR),$$ where $\mathfrak{sl}_d(\bR)$ is the real vector space of real $d \times d$ traceless matrices. In other words, Theorem~\ref{cons1} is obtained from Theorem~\ref{general} by setting $\Gamma=\Ad(\SL_d(\bZ))$ (and identifying $\Lambda_d$ with $\bZ^{d^2-1}$).  The following Proposition ensures that such a representation $\rho$ preserves certain algebraic conditions in the definition of a BQ-pair.

\begin{proposition} Let $\rho:G \to \SL_N(\mathbb{R})$ be a polynomial homomorphism, where $G \subset \SL_d(\mathbb{R})$ is a Zariski connected semisimple Lie group with no compact algebraic factors. Then for $\Gamma \leq G$ Zariski dense, we have that the Zariski closure of $\rho(\Gamma)$ is a Zariski connected semisimple Lie group with no compact algebraic factors. 

\end{proposition}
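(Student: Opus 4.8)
The plan is to reduce the whole statement to properties of the single group $H := \overline{\rho(G)}$, the Zariski closure of $\rho(G)$ in $\SL_N(\bR)$. First I would note that since $\Gamma$ is Zariski dense in $G$ and $\rho$ is a polynomial map (hence continuous for the Zariski topologies), one has $\rho(G) = \rho(\overline{\Gamma}) \subseteq \overline{\rho(\Gamma)} \subseteq \overline{\rho(G)}$, so that $\overline{\rho(\Gamma)} = \overline{\rho(G)} = H$. Moreover, the image of a morphism of algebraic groups is a Zariski closed subgroup, so in fact $H = \rho(G) \cong G/\ker\rho$. It therefore suffices to prove that $H$ is Zariski connected, semisimple, and has no compact algebraic factors.

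Zariski connectedness is immediate: $G$ is Zariski irreducible, and the closure of the image of an irreducible set under a morphism is irreducible. For semisimplicity I would invoke the structure theory of semisimple algebraic groups applied to the closed normal subgroup $\ker\rho \leq G$: its identity component $(\ker\rho)^\circ$ is a connected normal subgroup, hence an almost-direct product of a subfamily of the almost-simple factors of $G$; consequently $G/(\ker\rho)^\circ$ is isogenous to the almost-direct product of the remaining factors and is semisimple, and $H = G/\ker\rho$ is the quotient of $G/(\ker\rho)^\circ$ by the finite, hence central, group $\ker\rho/(\ker\rho)^\circ$, so $H$ is again connected semisimple. (Equivalently: a quotient of a connected semisimple algebraic group is connected semisimple.)

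The no-compact-factors assertion is the one place where I would use the precise algebraic formulation from the definition rather than any Lie-theoretic input, and it is a formal composition argument: if $\sigma : H \to \GL_r(\bR)$ is a Zariski-continuous homomorphism with bounded image, then $\sigma \circ \rho : G \to \GL_r(\bR)$ is Zariski-continuous with bounded image, hence trivial by the hypothesis on $G$; since $\rho(G) = H$, this forces $\sigma$ to be trivial on all of $H$.

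As for the genuine obstacle: none of this is deep, and the only point needing care is the semisimplicity step, where one must keep track of the distinction between $G$ and its real points and must know that passing to a quotient does not create a nontrivial unipotent radical. Note that perfectness alone would not suffice here — e.g.\ $\SL_2 \ltimes \bR^2$ is perfect but not semisimple — which is exactly why one appeals to the structure theorem for normal subgroups of semisimple groups rather than merely to $\rho(G) = \rho([G,G]) = [\rho(G),\rho(G)]$. One should also be careful not to conflate ``compact algebraic factor'' (the notion used throughout, i.e.\ a nontrivial bounded-image algebraic representation) with the Lie-theoretic ``compact Lie group factor''; the argument above concerns, and only needs, the former.
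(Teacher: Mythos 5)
There is a genuine gap at the key step where you write ``the image of a morphism of algebraic groups is a Zariski closed subgroup, so in fact $H = \rho(G) \cong G/\ker\rho$.'' This is false over $\bR$. Over an algebraically closed field the image of a morphism of algebraic groups is indeed closed, but over $\bR$ it need only have \emph{finite index} in its Zariski closure. A standard example: the natural map $\SL_2(\bR)\to\PGL_2(\bR)$ has image $\PSL_2(\bR)$, which has index $2$ in the Zariski-connected group $\PGL_2(\bR)$ and therefore cannot be Zariski closed. The paper uses precisely the finite-index statement (citing Corollary 4.6.5 of Morris's \emph{Ratner's theorems}): $\rho(G)$ sits with finite index inside $\overline{\rho(\Gamma)}^Z$.

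Once this is corrected, your argument can be repaired, and in fact becomes essentially the paper's argument. For semisimplicity you should argue: $\rho(G)$ is semisimple because its Lie algebra is a quotient of the semisimple Lie algebra of $G$ (your structure-theoretic discussion of $\ker\rho$ gives this), and since $\rho(G)$ has finite index in $H := \overline{\rho(\Gamma)}^Z$, the two share a Lie algebra, so $H$ is semisimple too — you cannot simply assert $H \cong G/\ker\rho$ and quote ``quotients of semisimple are semisimple.'' For the no-compact-factors claim, replace ``since $\rho(G)=H$'' by the density/closedness argument the paper gives: $\ker\sigma$ is Zariski closed and contains the Zariski-dense subgroup $\rho(G)$, hence $\ker\sigma = H$. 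Your Zariski-connectedness step is fine as written, and your composition trick $\sigma\circ\rho$ for ruling out compact factors is the same as the paper's. Your closing remark distinguishing compact algebraic factors from compact Lie factors is a correct and useful caution, but it does not touch the actual gap, which is the closedness-of-image claim.
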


\begin{proof} By Zariski-continuity, $\rho(G) \leq \overline{\rho(\Gamma)}^Z$ and in fact it is classical that $[\rho(G):\overline{\rho(\Gamma)}^Z]$ is finite (see for example Corollary 4.6.5 \cite{MorrisRatner}). Hence $\rho(G)$ being semisimple implies that  $\overline{\rho(\Gamma)}^Z$ also is. Again by Zariski continuity of $\rho$, we have that $\overline{\rho(\Gamma)}^Z$ is Zariski connected. Finally, suppose that $\kappa: \overline{\rho(\Gamma)}^Z \to GL_D(\mathbb{R})$ is a bounded algebraic group homomorphism (for some $D$), then so is $\kappa \circ \rho$ and so $\kappa(\rho(G))$ is the trivial subgroup. Thus $\rho(G) \leq \ker\kappa \leq \overline{\rho(\Gamma)}^Z$. But since $\ker\kappa$ is Zariski closed we have that it is equal to $\overline{\rho(\Gamma)}^Z$, so there are no compact factors. \end{proof}

\subsection{Algebro-geometric properties}

We now turn to determining the Zariski closures of $\SL_d(\bZ)$ and $\SO(Q)(\bZ)$ and verifying the required algebro-geometric properties (In this appendix, $Q$ will always denote a quadratic form as in Theorem~\ref{cons3}.). We first note the crucial fact that the groups $\SL_d(\bZ)$ and $\SO(Q)(\bZ)$ are, respectively, lattices in $\SL_d(\bR)$ and $\SO(Q)(\bR)$ (See Theorem 5.1.11 and Example 5.1.12 in \cite{MorrisArithmetic}). We also note that, as required by our main theorems, these lattices are finitely generated (See Theorem 4.7.10 in \cite{MorrisArithmetic} or Chapter IX in \cite{MargulisSS}). We will demonstrate below, via Borel's density theorem, that these lattices are Zariski dense. We remark the technicality that we use the following formulation of Borel's density theorem (not explicated in \cite{FurBorel} as it demands that $G$ is connected), which follows immediately from a combination of (4.5.1) in \cite{MorrisArithmetic} and (4.5.2) in \cite{MorrisRatner}. 

\begin{theorem}[Borel's density theorem] Let $G \leq \SL_{N}(\bR)$ be a Zariski connected semisimple Lie group (in particular, it has finitely many connected components) with no compact Lie group factors. Then any lattice in $G$ is Zariski dense. 

\end{theorem}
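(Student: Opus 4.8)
The plan is to deduce Borel's density theorem in the stated form by a reduction to the standard results already referenced. First I would recall the classical statement due to Borel (and presented in the literature, e.g. in \cite{MorrisArithmetic} and \cite{MorrisRatner}): if $H$ is a connected semisimple Lie group with trivial center and no compact factors, realized as the identity component of the real points of a semisimple algebraic group, and $\Lambda \leq H$ is a lattice, then $\Lambda$ is Zariski dense in $H$. The subtlety addressed here is purely one of connectedness bookkeeping: the group $G \leq \SL_N(\bR)$ in our formulation is Zariski connected but need not be connected as a Lie group, since it may have finitely many connected components in the real (Euclidean) topology.

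The key steps, in order, are as follows. Let $G^\circ$ denote the identity component of $G$ in the Euclidean topology; it is a finite-index normal subgroup of $G$, and since $G$ has no compact Lie group factors, neither does $G^\circ$. Given a lattice $\Lambda \leq G$, the intersection $\Lambda^\circ := \Lambda \cap G^\circ$ is a lattice in $G^\circ$ (a finite-index subgroup of a lattice, intersected with a finite-index closed subgroup, remains a lattice — this is the standard fact that lattices pass to and from finite-index subgroups and commensurable ambient groups). Now apply the classical Borel density theorem to $\Lambda^\circ \leq G^\circ$: this gives $\overline{\Lambda^\circ}^Z \supseteq G^\circ$, and since $G^\circ$ is already Zariski dense in $G$ (as $G$ is Zariski connected and $G^\circ$ has finite index, hence $\overline{G^\circ}^Z$ is a finite-index Zariski-closed subgroup of the Zariski-connected group $G$, forcing equality), we conclude $\overline{\Lambda}^Z \supseteq \overline{\Lambda^\circ}^Z = G$. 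Thus $\Lambda$ is Zariski dense in $G$.

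The one point that requires a little care — and which I expect to be the main obstacle — is verifying that the hypotheses of the classical theorem genuinely apply to $G^\circ$: specifically that ``no compact Lie group factors'' for $G$ (equivalently $G^\circ$) is the correct hypothesis, rather than the algebraic-group notion ``no compact factors'' used elsewhere in the paper. Here the distinction matters: Borel density in its usual form needs the absence of \emph{compact connected Lie group factors} of $G^\circ$, which is exactly what is assumed. One should also confirm that $G^\circ$, being the identity component of the real points of a semisimple algebraic group, has finite center and that its image in the adjoint group is a lattice — but these are encoded in the references \cite{MorrisArithmetic}, (4.5.1), and \cite{MorrisRatner}, (4.5.2), which together furnish precisely the connected-case statement we are invoking. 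Modulo these bookkeeping checks, the proof is a one-line reduction, and the cleanest write-up simply cites the two references for the connected case and remarks that the general case follows by passing to $G^\circ$ and $\Lambda \cap G^\circ$.
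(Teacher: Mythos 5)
Your reduction is correct, and it is essentially the same argument the paper is invoking: the paper simply cites the combination of (4.5.1) in Morris's \emph{Introduction to arithmetic groups} and (4.5.2) in Morris's \emph{Ratner's theorems} as giving this formulation immediately, without spelling out the passage to the identity component. Your write-up fills in exactly that bookkeeping — that $\Lambda \cap G^\circ$ is a lattice in $G^\circ$, that the connected-case Borel density applies, and that $G^\circ$ has the same Zariski closure as the Zariski-connected group $G$ — so there is no substantive divergence from the paper's intent.
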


\begin{lemma} The group $\SL_d(\mathbb{R})$ is the Zariski closure of $\SL_d(\mathbb{Z})$ and is a Zariski-connected semisimple Lie group with no compact factors. 

\end{lemma}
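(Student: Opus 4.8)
The plan is to prove the three required properties of $\SL_d(\bR)$ separately, each of which is essentially classical. First I would recall that $\SL_d$ is an affine algebraic group defined over $\bQ$ (it is the zero set of the single polynomial $\det - 1$), and that $\SL_d(\bR)$ is a connected Lie group; being connected, it is in particular Zariski connected. Since $\SL_d(\bZ)$ is a lattice in $\SL_d(\bR)$ (as recalled above from \cite{MorrisArithmetic}), Borel's density theorem in the form stated just above applies, \emph{provided} we know $\SL_d(\bR)$ has no compact Lie group factors. The Lie algebra $\mathfrak{sl}_d(\bR)$ is simple for $d \geq 2$, hence $\SL_d(\bR)$ is (almost) simple and in particular semisimple with no compact factors; thus Borel's density theorem yields that the Zariski closure of $\SL_d(\bZ)$ is all of $\SL_d(\bR)$ (more precisely, the real points of the Zariski closure, which is the whole algebraic group $\SL_d$).

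Next I would verify the "no compact factors" condition in the algebraic sense of the paper's definition, i.e. that every Zariski-continuous homomorphism $\rho : \SL_d(\bR) \to \GL_r(\bR)$ with bounded image is trivial. The key point is that $\SL_d$ is a connected, simply connected, almost simple algebraic group, so it has no proper Zariski-closed connected normal subgroups of positive dimension; hence $\ker \rho$ is either all of $\SL_d$ (done) or finite. If $\ker\rho$ were finite, $\rho$ would be an isogeny onto its image, so the image would be a semisimple group of the same dimension $d^2 - 1$ with bounded (hence compact) real points — but a semisimple Lie group with compact real points has Lie algebra of compact type, contradicting the fact that $\mathfrak{sl}_d(\bR)$ is split (it contains the noncompact Cartan subalgebra of diagonal traceless matrices, or simply: $\mathfrak{sl}_d(\bR)$ is not of compact type since, e.g., the Killing form is indefinite for $d \geq 2$). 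Therefore $\rho$ is trivial, so $\SL_d$ (and hence its real points) has no compact factors in the required sense.

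Finally, semisimplicity of $\SL_d$ is immediate: its Lie algebra $\mathfrak{sl}_d$ is simple over any field of characteristic zero for $d \geq 2$, so the algebraic group $\SL_d$ is semisimple (indeed almost simple); alternatively, $\SL_d$ has trivial center-free quotient $\PSL_d$ which is simple, and the radical is trivial. Putting the three pieces together gives the lemma. The only mildly delicate point is bookkeeping the distinction the paper draws between "compact Lie group factors" (used in Borel's density theorem) and "compact (algebraic) factors" (used in the BQ-pair definition); but for $\SL_d(\bR)$ both are vacuous for the same reason — $\mathfrak{sl}_d(\bR)$ is noncompact and simple — so no genuine obstacle arises. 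I expect the main (very mild) obstacle to be simply stating these standard structure-theory facts with correct references rather than any real mathematical difficulty.
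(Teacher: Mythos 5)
Your proof is correct, and it establishes the same three facts (Zariski connectedness, semisimplicity, no compact factors) and then closes with Borel's density theorem exactly as the paper does. The route to ``no compact factors'' is where you diverge: the paper invokes the single classical fact that the only proper non-trivial normal (abstract) subgroup of $\SL_d(\bR)$ is its center, which at once yields semisimplicity and forces any bounded homomorphism to be trivial (a proper kernel would give an image $\SL_d(\bR)$ or $\PSL_d(\bR)$, both non-compact). You instead argue at the Lie-algebra level: $\mathfrak{sl}_d(\bR)$ is simple, hence $\SL_d$ is almost simple as an algebraic group, so a bounded homomorphism has finite kernel unless trivial, and a finite kernel would make the image a semisimple group of dimension $d^2-1$ with compact real points, contradicting that $\mathfrak{sl}_d(\bR)$ is split (indefinite Killing form). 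Both arguments are sound; the paper's is terser and uses one classical group-theoretic fact, while yours is more structural and has the merit of explicitly tracking the paper's distinction between compact Lie group factors and compact algebraic factors, which the paper leaves implicit in this lemma.
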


\begin{proof} Zariski connectedness follows from the fact that $\SL_d(\mathbb{R})$ is connected in the Euclidean topology. The lack of compact factors follows from the much stronger classical fact that the only proper non-trivial normal (abstract) subgroup of $\SL_d(\mathbb{R})$ is its center (in particular, this also shows semisimplicity). Thus Borel's density theorem may be applied. \end{proof}

From now on, we identify $\SO(Q)(\bR)$ with $\SO(p,q)(\bR)$, as can be done via a linear change of coordinates.

\begin{lemma} For $p,q \geq 1$ with $p+q \geq 3$, the group $\SO(p,q)(\bR)$ is a Zariski-connected semisimple Lie group with no compact factors.  Moreover, the Zariski closure of $\SO(Q)(\bZ)$ is $\SO(Q)(\bR) \cong SO(p,q)(\bR)$. 
\end{lemma}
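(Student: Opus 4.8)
The plan is to verify the three structural properties of $\SO(p,q)(\bR)$ asserted in the statement and then to deduce the density claim from Borel's density theorem, just as in the previous lemma, using the fact recorded above that $\SO(Q)(\bZ)$ is a finitely generated lattice in $\SO(Q)(\bR)$ -- the latter being isomorphic to $\SO(p,q)(\bR)$ after a real linear change of coordinates.

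The one place where the argument genuinely differs from the $\SL_d$ case is Zariski connectedness: when $p,q\geq 1$ the group $\SO(p,q)(\bR)$ is \emph{not} connected in the Euclidean topology, so I would argue through the complexification. The complex points $\SO(Q)(\bC)$ form a connected linear algebraic group, since $\SO_n$ is connected. Let $H$ be the identity component of $\SO(p,q)(\bR)$ in the Euclidean topology; it is a connected real Lie group with Lie algebra $\mathfrak{so}(p,q)$, so its Zariski closure in $\SL_n(\bC)$ is a complex algebraic subgroup of $\SO(Q)(\bC)$ whose Lie algebra is a complex Lie subalgebra containing $\mathfrak{so}(p,q)$, hence contains $\mathfrak{so}(p,q)\otimes_{\bR}\bC=\mathfrak{so}_n(\bC)$ and therefore equals the connected group $\SO(Q)(\bC)$. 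Thus $H$, and a fortiori $\SO(p,q)(\bR)$, is Zariski dense in the irreducible variety $\SO(Q)(\bC)$, whence $\SO(p,q)(\bR)$ is itself irreducible in the Zariski topology, in particular Zariski connected.

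For semisimplicity I would use the hypothesis $n:=p+q\geq 3$: the complex Lie algebra $\mathfrak{so}_n(\bC)$ is semisimple for every $n\geq 3$ (type $A_1$ for $n=3$, $A_1\times A_1$ for $n=4$, and $B_k$ or $D_k$ for $n\geq 5$), hence so is its real form $\mathfrak{so}(p,q)$, and hence $\SO(Q)$ is semisimple. For the absence of compact factors, observe first that since $p,q\geq 1$ the form $Q$ is isotropic over $\bR$, so $\SO(p,q)(\bR)$ is noncompact. When $n\neq 4$ the real Lie algebra $\mathfrak{so}(p,q)$ is simple (because $\mathfrak{so}_n(\bC)$ is simple for $n=3$ and for $n\geq 5$), so $\SO(Q)$ is almost $\bR$-simple, and being noncompact it cannot have a compact factor. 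The case $n=4$ must be checked by hand: for signature $(2,2)$ the group $\SO(2,2)(\bR)$ is $\bR$-split and isogenous to $\SL_2\times\SL_2$, both factors noncompact, whereas for signatures $(1,3)$ and $(3,1)$ one has $\mathfrak{so}(1,3)\cong\mathfrak{sl}_2(\bC)$ as a real Lie algebra, which is simple and noncompact, so $\SO(1,3)$ is almost $\bR$-simple with no compact factor. In all cases $\SO(p,q)(\bR)$ has no compact factors in the sense of the Introduction, and in particular no compact Lie group factors.

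Having verified that $\SO(Q)(\bR)\cong\SO(p,q)(\bR)$ is a Zariski connected semisimple Lie group with no compact Lie group factors, and recalling that $\SO(Q)(\bZ)$ is a lattice in $\SO(Q)(\bR)$, Borel's density theorem in the form stated above shows that $\SO(Q)(\bZ)$ is Zariski dense in $\SO(Q)(\bR)$; since $\SO(Q)(\bR)$ is itself Zariski closed in $\SL_n(\bR)$, its Zariski closure equals $\SO(Q)(\bR)$, which finishes the argument. I expect the main obstacle to be the absence of compact factors, because the exceptional low-rank isomorphisms $\mathfrak{so}_3\cong\mathfrak{sl}_2$, $\mathfrak{so}_4\cong\mathfrak{sl}_2\oplus\mathfrak{sl}_2$ and $\mathfrak{so}(1,3)\cong\mathfrak{sl}_2(\bC)$ force the small case analysis above; a secondary technical point is the Zariski connectedness, which has to pass through the complexification rather than use Euclidean connectedness as in the $\SL_d$ case.
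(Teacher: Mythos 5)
Your proof is correct, but it follows a genuinely different route from the paper at both of the nontrivial steps. For Zariski connectedness, the paper cites from Onishchik--Vinberg that $[G:G^o]=2$ \emph{and} that $G^o$ is not Zariski closed, and deduces connectedness from the fact that the Zariski closure of $G^o$ is a subgroup strictly bigger than $G^o$ inside a two-component group; you instead pass to the complexification and show that the Euclidean identity component $H$ is Zariski dense in the connected group $\SO_n(\bC)$ (via the Lie algebra computation $\mathfrak{so}(p,q)\otimes_\bR\bC=\mathfrak{so}_n(\bC)$), so $\SO(p,q)(\bR)$ is a dense subset of an irreducible variety and hence Zariski connected. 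For the absence of compact (algebraic) factors, the paper reuses the same fact in a slick way: $G^o$, being a noncompact simple Lie group (or an isogeny image of $\SL_2(\bR)\times\SL_2(\bR)$ when $(p,q)=(2,2)$), has no bounded nontrivial representations, so it lies in the kernel of every compact algebraic factor map; the kernel is Zariski closed and $G^o$ is not, forcing the kernel to be all of $G$. You instead argue directly from almost-$\bR$-simplicity and noncompactness of $\SO(p,q)(\bR)$, with an explicit low-rank case analysis for $n=4$ (isogeny with $\SL_2\times\SL_2$ for $(2,2)$, and $\mathfrak{so}(1,3)\cong\mathfrak{sl}_2(\bC)$ for $(1,3)$). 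Both approaches are valid; the paper's ``$G^o$ not Zariski closed'' observation does double duty and lets one avoid tracking the exceptional isomorphisms twice, whereas your complexification argument for connectedness is more self-contained. One small point worth making explicit in your no-compact-factors argument: you should rule out the possibility that a Zariski-continuous $\rho$ has kernel with full Lie algebra and hence finite nontrivial image; this is automatic because the image of a Zariski connected group under a Zariski-continuous homomorphism is Zariski connected, and a finite Zariski connected group is trivial (the paper's formulation sidesteps this by showing the kernel is all of $G$ directly).
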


\begin{proof} Let $G=SO(p,q)(\bR)$ and let $G^o$ denote the connected (in the Euclidean topology) component of $SO(p,q)$. It follows from Problems 9 and 10 of Section 3 in Chapter 1 of \cite{OVbook} that $[G:G^o]=2$ and that $G^o$ is not Zariski closed. This implies that $G$ is the Zariski closure of $G^o$ and thus is Zariski connected. For $(p,q)\neq (2,2)$ it is well known (see for instance Appendix A in \cite{MorrisArithmetic}] that $G^o$ is simple as a Lie group and hence has no compact Lie group factors, while for $(p,q) \neq (2,2)$ we have that $G$ is a finite index quotient of $\SL_2(\bR) \times \SL_2(\bR)$ (see Appendix B in \cite{ZeeQuantum}) and thus is semisimple with no compact Lie group factors. In either case, we have that $G^o$ is contained in the kernel of all compact  (algebraic) factors of $G$. Hence, since $\SO^o(p,q)(\bR)$ is not Zariski closed, there are no non-trivial compact (algebraic) factors. Moreover, we may apply Borel's density theorem to obtain that all lattices (and hence $\SO(Q)(\bZ)$) are Zariski dense in $\SO(Q)(\bR) \cong G$. 
\end{proof}

\subsection{Irreducibility}

It now remains to check the strong irreducibility of the subgroups in our examples. Our first lemma shows that in fact it is enough to check the irreducibility of its Zariski closure.

\begin{lemma}[Irreducibility implies strong irreducibility] Suppose that $\Gamma \leq \GL_N(\bZ)$ is a subgroup such that its Zariski closure $G=\overline{\Gamma}^Z \leq GL_N(\bR)$ is Zariski connected and irreducible. Then $\Gamma$ is a strongly irreducible subgroup of $GL_N(\bR)$. 

\end{lemma}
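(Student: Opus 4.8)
The plan is to exploit the standard fact that a finite-index subgroup of a Zariski connected group is automatically Zariski dense in it, and then transfer irreducibility through Zariski density. Concretely, suppose $\Gamma' \leq \Gamma$ is a finite-index subgroup; I want to show that the standard representation of $\Gamma'$ on $\bR^N$ is irreducible. The first step is to pass to Zariski closures: let $H = \overline{\Gamma'}^Z \leq G = \overline{\Gamma}^Z$. Since $\Gamma'$ has finite index in $\Gamma$, and Zariski closure does not decrease index (the quotient $\Gamma/\Gamma'$ is finite, so $G/H$ — or rather the coset space — is finite), $H$ is a finite-index closed subgroup of $G$. But $G$ is Zariski connected, so $G$ has no proper finite-index Zariski closed subgroups, forcing $H = G$.

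The second step is to observe that a subspace $V \leq \bR^N$ is $\Gamma'$-invariant if and only if it is $H$-invariant: the stabilizer $\{g \in \GL_N(\bR) : gV \subseteq V\}$ is a Zariski closed subgroup (it is cut out by polynomial equations in the matrix entries), so if it contains $\Gamma'$ it contains $\overline{\Gamma'}^Z = H = G$. Hence any $\Gamma'$-invariant subspace is $G$-invariant, and by the assumed irreducibility of $G$ on $\bR^N$ it must be $\{0\}$ or $\bR^N$. This shows $\Gamma'$ acts irreducibly, and since $\Gamma'$ was an arbitrary finite-index subgroup, $\Gamma$ is strongly irreducible.

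The only genuinely nontrivial input is the claim that a Zariski connected algebraic group has no proper Zariski closed subgroup of finite index; this is classical (a finite-index closed subgroup contains the identity component, which equals the whole group by connectedness), so it can be invoked directly. The remaining ingredients — that subspace stabilizers are Zariski closed, and that Zariski closure is monotone and does not increase index — are routine. I do not anticipate a serious obstacle here; the lemma is really just unwinding definitions once one has the connectedness fact in hand. I would write it as follows.

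\begin{proof}
Let $\Gamma' \leq \Gamma$ be a subgroup of finite index; we must show that the standard representation of $\Gamma'$ on $\bR^N$ is irreducible. Let $H = \overline{\Gamma'}^Z \leq \GL_N(\bR)$ denote its Zariski closure. Since $\Gamma' \leq \Gamma$, we have $H \leq G$, and since $[\Gamma : \Gamma'] < \infty$, the subgroup $H$ has finite index in $G$. Indeed, if $g_1,\ldots,g_r$ are coset representatives for $\Gamma'$ in $\Gamma$, then $\Gamma \subseteq \bigcup_i g_i \overline{\Gamma'}^Z$, and the right-hand side is Zariski closed, so $G = \overline{\Gamma}^Z \subseteq \bigcup_i g_i H$; thus $[G : H] \leq r < \infty$. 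But $G$ is Zariski connected, and a Zariski connected algebraic group has no proper Zariski closed subgroup of finite index (such a subgroup contains the identity component, which is all of $G$). Hence $H = G$.

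Now let $V \leq \bR^N$ be a $\Gamma'$-invariant subspace. The set $\{ g \in \GL_N(\bR) \, : \, gV \subseteq V \}$ is a Zariski closed subgroup of $\GL_N(\bR)$, as it is defined by the vanishing of finitely many polynomials in the matrix entries. Since it contains $\Gamma'$, it contains $\overline{\Gamma'}^Z = H = G$. Therefore $V$ is $G$-invariant, and since $G$ acts irreducibly on $\bR^N$ by hypothesis, we conclude $V = \{0\}$ or $V = \bR^N$. As $\Gamma' \leq \Gamma$ was an arbitrary finite-index subgroup, $\Gamma$ is strongly irreducible.
\end{proof}
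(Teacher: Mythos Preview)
Your proof is correct and follows essentially the same approach as the paper's: pass to the Zariski closure of the finite-index subgroup, argue it has finite index in $G$, use Zariski connectedness to get equality with $G$, and then conclude via irreducibility of $G$. You simply spell out in more detail the coset argument for why finite index is preserved under Zariski closure and why the stabilizer of a subspace is Zariski closed, both of which the paper leaves implicit.
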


\begin{proof} Let $V \leq \bR^N$ be a non-trivial subspace invariant under a finite index subgroup $\Gamma_0 \leq \Gamma$. Then $\overline{\Gamma_0}^Z$ also preserves $V$ and is a finite index Zariski closed subgroup of $G$, hence $G=\overline{\Gamma_0}^Z$ by Zariski connectedness of $G$. So $G$ preserves $V$ and so $V=\bR^N$, as required. \end{proof}

\begin{lemma} The adjoint action (i.e. action by conjugation) of $\SL_d(\bR)$ on $\mathfrak{sl}_d(\bR)$ is irreducible. 

\end{lemma}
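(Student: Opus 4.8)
The plan is to show that $\mathfrak{sl}_d(\bR)$ has no proper nonzero subspace invariant under conjugation by all of $\SL_d(\bR)$. A clean way to do this is to exploit the weight-space decomposition with respect to the diagonal torus $T \leq \SL_d(\bR)$. First I would recall that under the adjoint action of $T$, the space $\mathfrak{sl}_d(\bR)$ decomposes as the direct sum of the Cartan subalgebra $\mathfrak{t}$ (the diagonal traceless matrices) and the root spaces $\bR E_{ij}$ for $i \neq j$, where $E_{ij}$ is the elementary matrix; these root spaces are one-dimensional and pairwise distinct as $T$-representations. Now let $V \leq \mathfrak{sl}_d(\bR)$ be a nonzero $\SL_d(\bR)$-invariant subspace. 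Since $V$ is in particular $T$-invariant and the root spaces are the distinct weight lines, $V$ must be a sum of some of these weight spaces together with a $T$-invariant subspace of $\mathfrak{t}$; in particular, either $V$ contains some $E_{ij}$ with $i \neq j$, or $V \subseteq \mathfrak{t}$.

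Next I would handle the first case. Suppose $E_{ij} \in V$ with $i \neq j$. Conjugating $E_{ij}$ by permutation matrices in $\SL_d(\bR)$ (signed permutations to stay in $\SL_d$) we obtain every $\pm E_{kl}$ with $k \neq l$, so $V$ contains all off-diagonal elementary matrices. But brackets (hence, via the identity $[\mathfrak{g},\mathfrak{g}] \subseteq V$ coming from one-parameter subgroup conjugation, differences lying in $V$) of the form $[E_{ij}, E_{ji}] = E_{ii} - E_{jj}$ lie in $V$ as well — more directly, $\frac{d}{dt}\big|_{t=0} \exp(tE_{ji}) E_{ij} \exp(-tE_{ji}) = [E_{ji}, E_{ij}] = E_{jj} - E_{ii} \in V$ since $V$ is closed and invariant. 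These diagonal differences span $\mathfrak{t}$, so $V = \mathfrak{sl}_d(\bR)$. For the second case, $V \subseteq \mathfrak{t}$ nonzero: pick $0 \neq H \in V$, so $H$ has two distinct diagonal entries in positions $i,j$; then $\exp(tE_{ij}) H \exp(-tE_{ij}) \in V$ for all $t$, and differentiating at $t=0$ gives $[E_{ij}, H] = (H_{jj}-H_{ii}) E_{ij} \in V$, a nonzero off-diagonal element, reducing to the first case. Hence $V = \mathfrak{sl}_d(\bR)$ in all cases.

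Alternatively, and perhaps more in keeping with the algebraic-group language of the paper, one can simply invoke that $\mathfrak{sl}_d$ is the (unique nonzero proper) ideal structure: $\mathfrak{sl}_d(\bR)$ is a simple Lie algebra for $d \geq 2$, an $\SL_d(\bR)$-invariant subspace of $\mathfrak{sl}_d(\bR)$ is the same as an ideal of $\mathfrak{sl}_d(\bR)$ (differentiate the conjugation action to get the adjoint action of the Lie algebra, using connectedness of $\SL_d(\bR)$ in the Euclidean topology), and simplicity forbids proper nonzero ideals. Either route is short; I would likely present the simplicity argument with a one-line justification that invariant subspaces coincide with ideals, and relegate the torus-decomposition computation to a remark or omit it.

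The main obstacle, such as it is, is purely expository: making precise the passage from $\SL_d(\bR)$-invariance of a subspace to $\mathfrak{sl}_d(\bR)$-stability (i.e.\ being an ideal). This requires noting that $\SL_d(\bR)$ is connected in the Euclidean topology and that differentiating $g \mapsto gXg^{-1}$ along one-parameter subgroups $t \mapsto \exp(tA)$ produces $[A,X]$, so that a closed invariant subspace is automatically $\ad$-stable and, by connectedness, conversely. Once this dictionary is in place, the irreducibility is immediate from the well-known simplicity of $\mathfrak{sl}_d$, which the paper is entitled to quote.
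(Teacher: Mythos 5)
Your preferred route (differentiate the conjugation action along one-parameter subgroups to see that an invariant subspace is an ideal, then invoke simplicity of $\mathfrak{sl}_d(\bR)$) is exactly the argument the paper gives, and it is correct. The explicit weight-space computation you sketch as an alternative is a fine, more elementary fallback, but is not needed once simplicity is quoted.
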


\begin{proof} Let $W \leq \mathfrak{sl}_d(\bR)$ by a subspace that is invariant under the adjoint action. By differentiating, we see that $[\mathfrak{sl}_d(\bR), W]=W$, i.e. $W$ is an ideal. But it is well known that $\mathfrak{sl}_d(\bR)$ is simple.  \end{proof}

For a representation $G \curvearrowright V$ and $v\in V$, we let $\mathbb{R}[G]v$ denote the smallest $G$-invariant subspace containing $v$.

\begin{lemma} The action of $SL_d(\mathbb{R})$ on $\Sym_d$ given by $g.A=gAg^t$ is irreducible.

\end{lemma}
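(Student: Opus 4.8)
The plan is to mimic the argument used for the adjoint action: pass to the Lie algebra level and exploit simplicity of $\mathfrak{sl}_d(\bR)$. Differentiating the action $g \cdot A = g A g^t$ at the identity gives the $\mathfrak{sl}_d(\bR)$-action $X \cdot A = XA + AX^t$. So let $W \leq \Sym_d$ be a non-trivial subspace invariant under all of $\SL_d(\bR)$; then $W$ is invariant under $X \cdot A = XA + AX^t$ for every $X \in \mathfrak{sl}_d(\bR)$. I want to show $W = \Sym_d$.

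First I would exhibit a single explicit nonzero element of $W$ and then move it around. Pick any nonzero $A \in W$. By applying $g \in \SL_d(\bR)$ (the action $A \mapsto gAg^t$ is the change-of-variables action on quadratic forms), I may diagonalize the symmetric bilinear form $A$ over $\bR$; scaling the coordinates (which can be absorbed into $g$ up to an overall determinant correction that one fixes using $d \geq 2$ to rescale by elements of $\SL_d$) I may assume $A = \operatorname{diag}(\eps_1,\dots,\eps_d)$ with each $\eps_i \in \{0,\pm 1\}$ and at least one $\eps_i \neq 0$. Next, using elementary matrices $E_{ij} \in \mathfrak{sl}_d(\bR)$ ($i \neq j$) acting via $X \cdot A = XA + AX^t$, I can produce off-diagonal symmetric matrices $E_{ii}$-type combinations and, by iterating, show that $W$ contains some matrix with a nonzero diagonal entry $e_{ii}$; then conjugating by a permutation matrix (in $\SL_d$, possibly after a sign adjustment) moves that entry around, and taking differences shows $W$ contains $\operatorname{diag}(1,-1,0,\dots,0)$ and all its permutations, hence all traceless diagonal symmetric matrices. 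Acting on $\operatorname{diag}(1,-1,0,\dots,0)$ by $E_{12}$ produces the elementary symmetric matrix $E_{12}+E_{21}$, and permuting indices yields every $E_{ij}+E_{ji}$. Finally, acting on $E_{12}+E_{21}$ by the diagonal element $\operatorname{diag}(t,-t^{-1}\cdots)$ or by $E_{11}-E_{22}$ and combining with what we have produces the identity matrix $I$ as well, so $W$ contains a basis of $\Sym_d$ and therefore $W = \Sym_d$.

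A cleaner way to organize the last paragraph, which I would actually use, is representation-theoretic: $\Sym_d$ is exactly the $\SL_d$-submodule of $\bR^d \otimes \bR^d$ obtained from the symmetrization, i.e. $\Sym_d \cong \Sym^2(\bR^d)$ as an $\SL_d(\bR)$-module (via the obvious identification of a symmetric matrix with a symmetric $2$-tensor, compatible with $A \mapsto gAg^t$). It is classical that $\Sym^2$ of the standard representation of $\SL_d$ is irreducible over $\bR$ (it has highest weight $2\omega_1$ and no invariant subspace, since $\SL_d$ has no invariant symmetric bilinear form on $\bR^d$ for $d \geq 2$). I would cite this, or alternatively argue directly: an invariant subspace $W$ is in particular invariant under the diagonal torus, hence is a sum of weight spaces; the weights of $\Sym^2(\bR^d)$ are $e_i + e_j$ ($i \leq j$), all with multiplicity one, and the root vectors $E_{ij}$ act transitively on this weight lattice, so the $\SL_d$-module generated by any single weight vector is everything. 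The main obstacle is purely bookkeeping — making the "elementary matrices act transitively on the weight spaces" claim precise — and is not a genuine difficulty; there is no deep input needed beyond simplicity of $\mathfrak{sl}_d(\bR)$, which has already been invoked in the preceding lemma.
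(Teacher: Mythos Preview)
Your proposal is correct, and the clean version you prefer --- identifying $\Sym_d$ with $\Sym^2(\bR^d)$ and using the weight-space decomposition under the diagonal torus, together with transitivity of the root vectors $E_{ij}$ on the multiplicity-one weights $e_i+e_j$ --- is a genuinely different route from the paper's. The paper stays entirely at the group level and is more hands-on: it first reduces (as you do) to the case of a nonzero diagonal $A$, but then, rather than differentiating, it shows directly that $\bR[G]A$ contains a \emph{positive} diagonal matrix, by using a diagonal element of $\SL_d(\bR)$ to make one entry of $A$ larger than $d$ and the others smaller than $1$ in absolute value, permuting the entries cyclically, and summing the $d$ permuted copies. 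Since every positive diagonal matrix lies in the $G$-orbit of a positive scalar multiple of the identity, and the positive diagonals span all diagonals (hence, via congruence diagonalization, all of $\Sym_d$), this finishes the argument. Your approach is more conceptual and generalizes immediately to other representations, at the cost of importing some highest-weight machinery; the paper's is completely elementary linear algebra with explicit matrices. One small caveat on your first, rougher sketch: the step ``permute and take differences to get $\operatorname{diag}(1,-1,0,\ldots,0)$'' does not work when $A = I$, since permutations fix $I$ and the differences vanish; you need either the diagonal Lie-algebra action $(E_{11}-E_{22})\cdot I = 2(E_{11}-E_{22})$ or the torus weight-projection, which you correctly invoke only in your second version.
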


\begin{proof} Since each non-zero element of $\Sym_d$ is in the $G$-orbit of some diagonal matrix, it is enough to show that $\mathbb{R}[G]A=\Sym_d$ for each non-zero diagonal matrix $A$. All positive diagonal matrices (i.e. diagonal matrices with positive diagonal entires) are in the $G$-orbit of some positive constant multiple of the identity matrix, but the positive diagonal matrices span the space of all diagonal matrices. Thus it remains to show that if we fix a non-zero diagonal matrix $A=\operatorname{diag}(a_1,a_2, \ldots, a_d)$, then the space $\mathbb{R}[G]A$ contains a positive diagonal matrix. Note that the $G$-orbit of $A$ contains $$\operatorname{diag}(Ka_1,K^{-1/(d-1)}a_2, \ldots, K^{-1/(d-1)}a_d)\quad \text{ for all }K>0$$ and also $$\operatorname{diag}(a_{\sigma(1)}, \ldots, a_{\sigma(n)}) \quad \text{ for all } \sigma \in S_n,$$ which can be seen from the identity \begin{align*} \begin{pmatrix}
0 & 1 \\
-1 & 0
\end{pmatrix} \begin{pmatrix}
d_1 & 0 \\
0 & d_2
\end{pmatrix} \begin{pmatrix}
0 & -1 \\
1 & 0
\end{pmatrix} = \begin{pmatrix}
d_2 & 0 \\
0 & d_1
\end{pmatrix}. \end{align*}

Assuming (without loss of generality) that $a_1>0$, we see (by taking $K$ large enough) that the $G$-orbit of $A$ contains an element of the form $B_1=\operatorname{diag}(b_1,\ldots, b_n)$ where $b_1>d$ and $|b_k|<1$ for $k=2,\ldots, d$. The $G$-orbit of $A$ also contains $B_r=(b_{\sigma(1)}, \ldots, b_{\sigma(n)})$ where $\sigma$ is the transposition $(1r)$. Thus $B_1 + \cdots + B_d \in \mathbb{R}[G]A$ is a diagonal matrix with positive diagonal entries.

\end{proof}

\begin{lemma} For $p,q \geq 1$ with $p+q\geq 3$, the action of $SO(p,q)$ on $\mathbb{R}^{p+q}$ is irreducible.

\end{lemma}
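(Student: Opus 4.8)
The plan is to show that the standard representation of $\SO(p,q)$ on $\bR^{p+q}$ is irreducible by exploiting the fact that $\SO(p,q)$ acts transitively enough on the nonzero vectors to generate all of $\bR^{p+q}$ from any single nonzero vector. More precisely, let $W \leq \bR^{p+q}$ be a nonzero $\SO(p,q)$-invariant subspace; I want to conclude $W = \bR^{p+q}$. Since $\SO(p,q)$ preserves the form $Q$, the $Q$-orthogonal complement $W^{\perp_Q}$ is also invariant, so it suffices to understand how $W$ meets the three types of vectors (positive, negative, and null with respect to $Q$).

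First I would pick a nonzero $w \in W$ and split into cases according to $Q(w)$. If $Q(w) \neq 0$, I would use Witt's theorem (transitivity of $\SO(p,q)$, or at least $\mathrm{O}(p,q)$, on vectors of a fixed nonzero norm — and one checks the special-orthogonal subgroup already suffices here since $p+q \geq 3$ leaves room to fix up the determinant) to move $w$ to any other vector of the same norm; the span of all such vectors is easily seen to be everything when $p+q \geq 2$. If $Q(w) = 0$, i.e. $w$ is a nonzero null vector, then since $p,q \geq 1$ there is another null vector $w'$ with $Q(w+w') = \langle w, w'\rangle_Q \neq 0$ pairing nontrivially, and by Witt's theorem again one can send $w$ to a suitable null vector so that $w$ together with its images spans a hyperbolic plane inside $W$; in particular $W$ contains a non-null vector, reducing to the previous case. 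Alternatively, and perhaps more cleanly, I would argue at the Lie algebra level: differentiate the invariance of $W$ to get that $W$ is $\mathfrak{so}(p,q)$-invariant, and then use that the standard representation of the complexification $\mathfrak{so}(p+q,\bC)$ is well known to be irreducible (it is the vector representation of a simple Lie algebra for $p+q \geq 3$, excluding the $D_2$ coincidence only at $p+q=4$, where one checks directly that the vector representation of $\mathfrak{so}(4)$ is still irreducible as it corresponds to the tensor of the two $\SL_2$ standard representations). Since $W \otimes \bC$ is a nonzero $\mathfrak{so}(p+q,\bC)$-submodule of $\bC^{p+q}$, it must be everything, hence $W = \bR^{p+q}$.

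The main obstacle I anticipate is the low-rank coincidence at $p+q = 4$, where $\SO(4)$-type groups are only semisimple rather than simple and the "vector representation is irreducible because the group is simple" slogan does not literally apply; there one must verify irreducibility of the $4$-dimensional standard representation by hand (for $\SO(2,2)$ this is the representation $\bR^2 \boxtimes \bR^2$ of $\SL_2 \times \SL_2$ up to finite index, which is indeed irreducible). A secondary technical point is making sure the relevant transitivity statements hold for the \emph{special} orthogonal group and not merely the full orthogonal group, but the hypothesis $p + q \geq 3$ gives enough degrees of freedom to absorb a determinant $-1$ correction into a reflection lying in $\SO(p,q)$ composed with the transformation at hand. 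Given these checks, the conclusion $W = \bR^{p+q}$ follows, completing the proof that the action is irreducible; combined with the earlier lemma that irreducibility of the Zariski-connected Zariski closure implies strong irreducibility of $\SO(Q)(\bZ)$, this finishes the verification needed for Theorem~\ref{cons3}.
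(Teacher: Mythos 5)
Your proposal is correct, but it takes a genuinely different route from the paper. You offer two valid arguments: one via Witt's theorem (move any nonzero vector within its $\SO(p,q)$-orbit and observe the orbit spans; handle the null case by producing a non-null sum $w+gw$), and one via the Lie algebra (differentiate to get $\mathfrak{so}(p,q)$-invariance, complexify, and invoke irreducibility of the vector representation of $\mathfrak{so}(p+q,\bC)$, treating the $D_2$ coincidence at $p+q=4$ separately as $\bR^2\boxtimes\bR^2$ for $\mathfrak{sl}_2\times\mathfrak{sl}_2$). The paper instead proves a small self-contained lemma: if $H\times K\leq G\leq\GL(V\oplus W)$ with $H$ and $K$ acting irreducibly on $V$ and $W$, $\dim W>1$, and neither $V\times\{0\}$ nor $\{0\}\times W$ is $G$-invariant, then $G$ acts irreducibly on $V\oplus W$. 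It then applies this to the block-diagonal $\SO(p)\times\SO(q)\leq\SO(p,q)$, checking non-invariance of the two blocks via a copy of $\SO(1,1)$ acting irreducibly on a hyperbolic plane. What the paper's route buys is elementariness and self-containedness: no Witt theorem, no complexification, no knowledge of the classification of low-rank orthogonal Lie algebras — just basic linear algebra applied to a generic submodule. What your routes buy is brevity and conceptual clarity via standard theory; the Lie algebra argument in particular is the textbook proof. Both correctly flag the relevant edge case (the paper assumes $q\geq 2$ so that $\dim W>1$, relying implicitly on swapping $p$ and $q$ when $q=1$; you flag $p+q=4$ and resolve it). Either would be acceptable; they are just different.
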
 
This will be deduced from the following general observation. 

\begin{lemma} Let $V$ and $W$ be vector spaces with $\operatorname{dim}W>1$ and let $H \leq GL(V), K \leq GL(W)$ be subgroups acting irreducibly on $V$ and $W$ respectively. Now suppose that $H \times K \leq G \leq GL(V \oplus W)$ is a subgroup such that $V \times \{0\}$ and $\{0\} \times W$ are not $G$-invariant. Then $G$ acts irreducibly on $V \oplus W$.

\end{lemma}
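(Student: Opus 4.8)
\textit{Proof proposal.} The plan is to show that any nonzero $G$-invariant subspace $U \leq V \oplus W$ is all of $V \oplus W$. The key structural fact I would exploit is that $U$, being $G$-invariant, is in particular invariant under the subgroup $H \times K \leq G$ — hence under $H \times \{e\}$ and $\{e\} \times K$ separately — whereas the coordinate subspaces $V \oplus 0$ and $0 \oplus W$, though $H \times K$-invariant, are by hypothesis \emph{not} $G$-invariant. This is a Goursat-type argument.

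First I would show the coordinate projections $p_V, p_W$ map $U$ onto all of $V$, resp. $W$. Indeed $p_V(U)$ is an $H$-invariant subspace of $V$ (from $H \times \{e\}$-invariance of $U$), hence $0$ or $V$; if it were $0$ then $U \subseteq 0 \oplus W$, so $U$ would be a $K$-invariant subspace of $W$, hence $0$ or $0 \oplus W$, and as $U \neq 0$ it would equal $0 \oplus W$ — contradicting that $0 \oplus W$ is not $G$-invariant. Thus $p_V(U) = V$, and symmetrically $p_W(U) = W$. Next I would examine $U \cap (0 \oplus W)$, which is $K$-invariant, hence $0$ or $0 \oplus W$. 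If it equals $0 \oplus W$, then combined with $p_V(U) = V$ this forces $V \oplus 0 \subseteq U$ as well (given $v \in V$, pick $(v,w) \in U$ and subtract $(0,w) \in U$), so $U = V \oplus W$ and we are done. If instead $U \cap (0 \oplus W) = 0$, then $p_V|_U$ is injective and onto, hence an isomorphism, and $U$ is the graph of a linear map $\psi : V \to W$.

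The crux is to rule out this graph sub-case. Writing out $H \times K$-invariance of $U = \{(v, \psi(v)) : v \in V\}$ gives $\psi(hv) = k\,\psi(v)$ for all $(h,k) \in H \times K$ and $v \in V$; taking $h = e$ yields $k\,\psi(v) = \psi(v)$ for all $k$, so the image of $\psi$ lies in the space $W^K$ of $K$-fixed vectors. But $W^K$ is $K$-invariant, hence $0$ or $W$, and $W^K = W$ is impossible since it would make $K$ act trivially on $W$, forcing $\dim W \leq 1$ against the hypothesis $\dim W > 1$. Hence $W^K = 0$, so $\psi = 0$ and $U = V \oplus 0$ — contradicting $p_W(U) = W \neq 0$. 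This eliminates the graph sub-case, and the proof is complete. The one point requiring care is precisely this last step: the symmetric argument via $p_W|_U$ and a graph $W \to V$ would only conclude that $H$ acts trivially on $V$ and $\dim V \leq 1$, which is not a contradiction, so it is essential to project onto $V$ (so the graph runs $V \to W$) and let the hypothesis $\dim W > 1$ do the work; no extra hypothesis on $V$ is needed.
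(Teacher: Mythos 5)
Your proof is correct, and it takes a genuinely different route from the paper. The paper's proof is constructive: starting from an arbitrary nonzero $x_0$, it produces vectors $x_1$ with nonzero $W$-coordinate, then $x_2 = x_1 - (1,k_1)\cdot x_1$ purely in $0 \oplus W$, deduces $0 \oplus W \subseteq \mathbb{R}[G]x_0$ by irreducibility of $K$, and then uses non-invariance of $0 \oplus W$ to pull a vector with nonzero $V$-coordinate back into the span, concluding $V \oplus W = \mathbb{R}[G]x_0$. Your proof instead classifies the possible $G$-invariant subspaces $U$ in a Goursat-style way: after showing both projections are surjective, you split on whether $U \cap (0 \oplus W)$ is trivial or full, and in the trivial case you realize $U$ as the graph of a linear $\psi : V \to W$ satisfying the intertwining relation $\psi(hv) = k\psi(v)$, whose image must land in $W^K = 0$ (here $\dim W > 1$ enters). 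The two arguments use the hypothesis $\dim W > 1$ for the same underlying reason — a nonzero vector in an irreducible $K$-module of dimension $>1$ cannot be fixed by all of $K$ — but your version makes it structurally transparent that the graph case is what must be excluded and why the hypothesis on $\dim W$ (and none on $\dim V$) suffices, while the paper's version gives a slightly more hands-on path to the same conclusion.

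One small remark: in your graph sub-case, once you conclude $U = V \oplus 0$, you appeal to $p_W(U) = W \neq 0$ for the contradiction; you could equally well invoke directly that $V \oplus 0$ is not $G$-invariant by hypothesis, which makes the structure of the argument slightly more uniform (and removes the implicit assumption $W \neq 0$, which is anyway harmless given $\dim W > 1$). Both are fine.
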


\begin{proof} Choose $x_0=(v_0,w_0) \in V \oplus W \setminus \{(0,0)\}$ and let $\mathbb{R}[G]x_0$ denote the smallest $G$-invariant subspace containing $x_0$. There exists $x_1=(v_1,w_1) \in \mathbb{R}[G]x_0$ such that $w_1 \neq 0$ (by non-invariance of $V \times \{0\}$). Now since $\operatorname{dim}W>1$ and $K$ acts irreducibly, there exists $k_1 \in K$ such that $k_1.w_1 \neq w_1$. Hence $$x_2:=x_1-(1,k_1).x_1=(0,w_2) \in \mathbb{R}[G]x_0$$ with $w_2=w_1-k_1 . w_1 \neq 0$. Since the action of $K$ is irreducible, we have $\{0\} \times W \leq \mathbb{R}[G]x_0$. But since $\{0 \} \times W$ is not $G$-invariant, there exists $(v_3,w_3) \in \mathbb{R}[G]x_0$ such that $v_3 \neq 0$. But $(v_3,0) = (v_3,w_3) - (0,w_3) \in \mathbb{R}[G]x_0$. So by irreducibility of $H$ we have that $V \times \{0\} \subset \mathbb{R}[G]x_0$. \end{proof}

The lemma applies (assuming $q \geq 2$) with $V=\mathbb{R}^p$, $W=\mathbb{R}^q$, $H=SO(p)$, $K=SO(q)$ and $G=SO(p,q)$. The non-invariance of $V$ and $W$ follow from considering a natural embedding $SO(1,1) \hookrightarrow SO(p,q)$ and using the fact that $SO(1,1)$ acts irreducibly on $\mathbb{R}^2$ (this can be seen by considering hyperbolic rotations).

\end{document}